\theoremstyle{plain}
\newtheorem{thm}{Theorem}[section]
\newtheorem{prop}[thm]{Proposition}
\newtheorem{defin}[thm]{Definition}
\newtheorem{lemma}[thm]{Lemma}
\newtheorem{cor}[thm]{Corollary}
\newtheorem{claim}[thm]{Claim}
\theoremstyle{definition}
\newcommand{\e}{\varepsilon}
\newcommand{\Reff}{R_\mathrm{eff}}
\DeclareMathOperator{\Exp}{\mathbb{E}}
\DeclareMathOperator{\E}{\mathbb{E}}
\DeclareMathOperator{\Prob}{\mathbb{P}}
\newcommand{\stab}{\mathrm{Stab}}
\newcommand{\T}{\Gamma}
\newcommand{\Gb}{\mathcal{G}_\bullet}
\newcommand{\lr}{\leftrightarrow}
\newcommand{\eq}{equation}
\newcommand{\be}{\begin{\eq}}
\newcommand{\ee}{\end{\eq}}
\renewcommand{\and}{\hbox{ and }}
\newenvironment{manualtheorem}[1]{%
  \manualtheoreminner
}{\endmanualtheoreminner}
\newenvironment{manuallemma}[1]{%
  \manuallemmainner
}{\endmanualtheoreminner}
\newenvironment{manualcor}[1]{%
  \manualcorinner
}{\endmanualtheoreminner}
\title[The local limit of uniform spanning trees]{The local limit of uniform spanning trees}
\author{Asaf Nachmias and Yuval Peres}
\begin{document}

\begin{abstract} We show that the local limit 
of the uniform spanning tree on any finite, simple, connected, regular graph sequence with degree tending to $\infty$ is the Poisson$(1)$ branching process conditioned to survive forever. An extension to ``almost'' regular graphs and a quenched version are also given.
\end{abstract}

\maketitle

\vspace{.2cm}

\section{Introduction}

A {\bf spanning tree} $T$ of a finite connected graph $G$ is a subset of edges spanning a connected graph, containing no cycles, and such that every vertex of $G$ is incident to some edge of $T$. The {\bf uniform spanning tree} (UST) of such a graph $G$ is a uniformly drawn tree from the finite set of spanning trees of $G$. In this paper we study the \emph{local limit} of the UST on regular connected graphs with large degree. This limit, defined by Benjamini and Schramm \cite{BeSc}, is an infinite random rooted tree which encodes the local structure of the UST viewed from a typical vertex, see further definitions and discussion below.

\begin{thm}\label{thm:mainthm} Let $\{G_n\}$ be a sequence of finite, simple, connected, regular graphs with degree $d(n)\to \infty$. Then the local limit of the UST on $G_n$ is the Poisson$(1)$ branching process conditioned to survive forever.
\end{thm}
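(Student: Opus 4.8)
By the usual reformulation of Benjamini--Schramm convergence it is enough to fix a radius $R$ and a finite rooted tree $(t,o)$ and to show
\[
\frac1n\sum_{v\in G_n}\Prob\big[B_R(v,T_n)\cong(t,o)\big]\;\longrightarrow\;\Prob\big[B_R(\mathsf T)\cong(t,o)\big],
\]
where $B_R$ is the ball of radius $R$, $T_n$ is the UST of $G_n$, and $\mathsf T$ is the $\mathrm{Poisson}(1)$ Galton--Watson tree conditioned to survive, which by Kesten's spine decomposition is described recursively: its root, and every vertex on the distinguished spine, has $1+\mathrm{Poisson}(1)$ children, one of which continues the spine and the rest of which are roots of independent \emph{unconditioned} $\mathrm{Poisson}(1)$ Galton--Watson ``bushes''. (The case $G_n=K_n$ is the classical fact that a uniform random labelled tree converges locally to $\mathsf T$, and the strategy is to reduce to it.) The quenched statement in addition requires $\tfrac1n\#\{v:B_R(v,T_n)\cong(t,o)\}$ to concentrate around this mean. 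To compute the left-hand side, fix $v$, root $T_n$ at $v$, and reveal $B_R(v,T_n)$ by an adaptively ordered run of Wilson's algorithm: process the $G_n$-neighbours of $v$ one by one; each such $w$ not yet revealed contributes the loop-erased random walk from $w$ to the currently revealed subtree, and the vertex following $w$ on that path is the parent of $w$ in $T_n$, so in particular this determines which $G_n$-neighbours of $v$ are children of $v$; then recurse on all revealed vertices of depth $\le R-1$. This reveals $B_R(v,T_n)$ exactly, and uses at most $d\cdot|B_{R-1}(v,T_n)|$ loop-erased walks, which will be finite with high probability.

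\textbf{The key estimate.}
The heart of the matter is an estimate on loop-erased walk on a $d$-regular graph with $d$ large, where each walk step is to a uniform neighbour. Consider one exploration step: a loop-erased walk from a vertex $w$ adjacent to an already-revealed vertex $x$, run to the revealed forest $A\ni x$. With probability $\tfrac1d$ the walk steps straight into $x$, the loop-erasure is the single edge $wx$, and $w$ becomes a child of $x$; with the remaining probability it first moves into the still-enormous complement of the locally revealed picture and --- this is the crucial point --- it then returns to $A$ without disturbing that local picture, because by the time one is filling in bushes the forest $A$ is macroscopic on the relevant scale. Hence the number of the $d$ neighbours of a given revealed vertex $x$ that become children of $x$ is a sum of $d$ asymptotically independent Bernoulli variables with maximal parameter $o(1)$ and total tending to $1$, so it converges to $\mathrm{Poisson}(1)$, and the branches it carries off are asymptotically independent unconditioned $\mathrm{Poisson}(1)$ Galton--Watson trees. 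To make ``$A$ is large enough'' quantitative and to absorb the error coming from the fact that $G_n$ need not be tree-like locally, one invokes Foster's theorem $\sum_e\Reff(e)=n-1$: it forces all but an $o(1)$-fraction of the edges of $G_n$ to have $\Reff=o(1)$, hence all but $o(n)$ vertices have almost all incident edges ``light'' and transient-like, while the $o(n)$ exceptional vertices contribute negligibly to the average over $v$. A separate ingredient is that the first loop-erased walk emanating from $v$ has length $>R$ with high probability (equivalently, $\operatorname{diam}(T_n)\to\infty$): the branch it reveals is the incipient spine, and the fact that it does \emph{not} terminate is precisely what produces the conditioning to survive.

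\textbf{Identification of the limit.}
Running the exploration of $B_R(v,T_n)$ in tandem with the natural exploration of $B_R(\mathsf T)$ --- by the recursive description above this is a length-$R$ spine carrying independent truncated $\mathrm{Poisson}(1)$ Galton--Watson bushes plus an extra $\mathrm{Poisson}(1)$ offspring at each spine vertex --- the estimate of the previous paragraph shows the revealed rooted tree converges in distribution to $B_R(\mathsf T)$, with all estimates uniform over $v$ outside the exceptional $o(n)$ set. Averaging over $v$ gives the annealed convergence, i.e.\ \cref{thm:mainthm} in the Benjamini--Schramm sense.

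\textbf{Quenched version, almost-regular extension, and the main obstacle.}
For the quenched refinement one shows $\mathrm{Var}\big(\tfrac1n\#\{v:B_R(v,T_n)\cong(t,o)\}\big)\to0$ by expanding over pairs $(u,v)$: reveal $B_R(u,T_n)$ with the Wilson exploration and then reveal $B_R(v,T_n)$ conditionally, i.e.\ in the UST of $G_n$ with the revealed piece contracted; for all but $o(n^2)$ pairs this conditioning has vanishing effect, since the revealed piece is ``thin'' and $v$ is typically far from it in $T_n$, which yields asymptotic independence, while the remaining pairs are too few to matter --- an effective-resistance input again controlling these counts. The ``almost regular'' extension follows from the same argument, $d$-regularity being used only through the uniform-neighbour property of walk steps and through Foster's theorem, both of which survive the hypothesis that all but $o(n)$ vertices have degree $(1+o(1))\bar d$ with $\bar d\to\infty$. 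The main difficulty throughout is the estimate of the second paragraph: since $G_n$ is not assumed locally tree-like, loop-erased walks can wind through arbitrary structure, and one must show simultaneously that a child of $x$ is created ``at the last step'' essentially only via the immediate move $w\to x$ and that the unavoidable global windings of the walk are harmlessly swallowed by the already-revealed forest --- the quantitative control coming, as explained, from Foster's theorem.
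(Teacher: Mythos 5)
Your route is genuinely different from the paper's: the paper never runs Wilson's algorithm. Instead it proves the concrete reformulation \cref{thm:mainthm2} by summing over $T$-compatible $k$-tuples $(v_1,\dots,v_k)$, using Kirchhoff's formula \eqref{eq:kirchhoff} together with the spatial Markov property (\cref{prop:spatialMarkov}) to write $\Prob(T(v_1,\dots,v_k)\subset\T_n)=\prod_{i=2}^k\Reff(v_i\lr\{v_1,\dots,v_{i-1}\})$, concentrating these one-to-many resistances around $\tfrac{i}{(i-1)d}$ via Foster-type estimates and a matrix-perturbation lemma (\cref{thm:GeneralFoster}, \cref{thm:reff1tomany}), bounding the probability that the remaining edges at internal vertices are closed by $(1+o(1))e^{-t}(k-t)/k$, and then --- crucially --- only proving an \emph{upper} bound for each tree shape, which is upgraded to a limit because the upper bounds sum to $1$ over all shapes and the ball sizes are tight (\cref{thm:tightness}). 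Your LERW exploration, if carried out, would give both bounds directly, and the paper's sum-to-one trick is precisely what lets it avoid the hitting-distribution analysis your approach requires.

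That analysis is where your proposal has a genuine gap, and you partly flag it yourself. The assertion that a neighbour $w$ of a revealed vertex $x$ becomes a child of $x$ "with probability $\tfrac1d$ by stepping straight into $x$" and otherwise "returns to $A$ without disturbing the local picture" is not a proof and is not even quantitatively right as stated: the event that the loop-erasure of the walk from $w$ is the single edge $(w,x)$ includes walks that wander and return, and the correct probability is governed by effective resistances in the graph with $A$ contracted (already for $A=\{x\}$ the edge probability is $\Reff(w\lr x)\approx 2/d$, not $1/d$). More seriously, Foster's theorem controls the \emph{sum of pairwise edge resistances}, i.e.\ single-edge marginals; it says nothing about (a) the harmonic measure of the LERW's hitting point on the growing set $A$ --- in particular the probability that the walk from $w$ re-enters $A$ at a vertex \emph{inside} $B_R(v,\T_n)$ other than $x$, which would add unaccounted vertices to the ball; (b) the length distribution of the loop-erased path, which determines at what depth $w$ attaches; or (c) the claimed asymptotic independence of the $d$ Bernoulli indicators. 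Note also that at the first step $A=\{v\}$ is a single vertex, so the "$A$ is macroscopic" justification is unavailable exactly when the spine is being created, and the subsequent walks target a set $A$ that is a long path of a priori unknown geometry. None of these is supplied by the ingredients you invoke, and supplying them is essentially the whole difficulty of the theorem; the paper's static resistance computation (\cref{lem:openedges}) is its substitute for exactly this missing estimate.
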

The large scale geometry of the UST on regular graphs of high degree may behave very differently depending on the underlying graph, see \cref{sec:maxdiamquestion}. It is therefore surprising that from the local point of view the UST's behavior is universal.




Let us present an equivalent yet more concrete version of this theorem giving precisely the probability of seeing a fixed rooted tree in a ball of radius $r$ around a random vertex. Let $T$ be a finite rooted tree of height $r \geq 1$, that is, the maximal graph distance between the root and  a vertex of $T$ equals $r$. Denote by $T_r$ the set of vertices of $T$ at graph distance precisely $r$ from the root and by $\stab_T$ the set of graph automorphisms of $T$ that preserve the root. Given a tree $\T$, an integer $r \geq 0$ and a vertex $v$ we write $B_{\T}(v,r)$ for the induced rooted subtree of $\T$ on the vertices that are at graph distance at most $r$ from $v$ in $\T$. \cref{thm:mainthm} can be now restated as follows.

\begin{thm}\label{thm:mainthm2} Let $\{G_n\}$ be a sequence of finite, simple, connected, regular graphs with degree $d(n)\to \infty$. Let $\T_n$ be a UST of $G_n$ and $X$ a uniformly chosen vertex of $G_n$. Then for any fixed rooted tree $T$ of height $r \geq 1$ we have that
$$ \lim_{n\to\infty} \Prob ( B_{\T_n}(X, r) \cong T) = \frac{|T_r| e^{-|V(T)|+|T_r|}}{|\stab_T|} \, ,$$
where $\cong$ means rooted graph isomorphism. 
\end{thm}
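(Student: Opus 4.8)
The plan is to prove \cref{thm:mainthm2} in two stages: first show that the right-hand side equals $\Prob\big(B_{\mathcal K}(\rho,r)\cong T\big)$, where $\mathcal K$ is the Poisson$(1)$ Galton--Watson tree conditioned to survive (with root $\rho$), and then show $B_{\T_n}(X,r)\Rightarrow B_{\mathcal K}(\rho,r)$. The first stage is a short computation. Recall $\mathcal K$ is the size-biased tree: it has a unique infinite ray (spine) $\rho=v_0,v_1,\dots$, each $v_i$ has $1+\mathrm{Poisson}(1)$ children one of which is $v_{i+1}$, and every off-spine vertex roots an independent ordinary Poisson$(1)$ Galton--Watson tree. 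Since for the Poisson$(1)$ law $\mu$ one has $k!\,\mu(k)=e^{-1}$, and the number of plane representatives of a rooted tree $t$ equals $\prod_{v\in t}(\deg^+_t v)!/|\stab_t|$ (product of out-degree factorials over automorphisms), an ordinary Poisson$(1)$ Galton--Watson tree equals a fixed $t$ with probability $e^{-|V(t)|}/|\stab_t|$, and its height-$s$ truncation equals $t$ (for $\mathrm{ht}(t)\le s$) with probability $e^{-(|V(t)|-|t_s|)}/|\stab_t|$, as only the depth-$<s$ vertices constrain the offspring. Assembling $B_{\mathcal K}(\rho,r)$ from a spine path ending at a depth-$r$ leaf together with its bushes, the $e^{-1}$ factors from the spine offspring cancel against the absent deepest-level constraints to leave $e^{-|V(T)|+|T_r|}$, and a short orbit-counting argument over the $|T_r|$ choices of which depth-$r$ leaf $\ell$ serves as the spine tip (using $1/|\stab_{(T,\ell)}|=|\mathrm{orbit}(\ell)|/|\stab_T|$) produces the factor $|T_r|/|\stab_T|$.

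For the convergence the tool is Wilson's algorithm rooted at $X$: the path from any vertex $v$ to $X$ in $\T_n$ is a loop-erased random walk from $v$ to $X$, and $\T_n$ is built by iteratively adjoining loop-erased walk branches to a tree started from $\{X\}$. I would reveal $\T_n$ rooted at $X$ branch by branch, processing vertices in uniformly random order, and track only the structure within tree-distance $r$ of $X$. The first branch --- a loop-erased walk from a uniform vertex $v_2$ to $X$ --- supplies, with high probability, a simple path whose initial $r$ vertices $y_1,\dots,y_r$ begin the macroscopic (spine) direction. Each later branch attaches to the current tree somewhere; I would show that only $O_\Prob(1)$ of them attach within tree-distance $r-1$ of $X$, each then contributing a short extra subtree, and that after $O_\Prob(1)$ branches no further branch can attach so close, so $B_{\T_n}(X,r)$ has stabilised and equals the revealed object.

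The core is matching the limit of this reveal with $B_{\mathcal K}(\rho,r)$, using three high-degree inputs. (i) A random walk from a fresh vertex takes many steps before hitting any fixed $O(1)$-size set, so loop-erasure is trivial along short initial segments and distinct branches are vertex-disjoint near $X$ whp; hence each attaching branch meets the revealed part at a genuinely new vertex. (ii) For a uniform vertex, deterministically $\E[\deg_{\T_n}(X)]=2-2/|V(G_n)|$, and by exchangeability together with the asymptotic negligibility of the conditioning the tree-degree of each already-revealed interior vertex also has mean $\to 2$; a Poissonisation step (many weakly dependent, individually rare candidate branches at a given revealed vertex) upgrades this to: the number of \emph{extra} children of each revealed depth-$(<r)$ vertex converges to Poisson$(1)$, jointly independent over vertices. (iii) Because $X$ is a uniform vertex of the macroscopic tree $\T_n$, a size-bias appears along the unique path from $X$ into the bulk --- the spine $v_0=X,v_1,\dots$ --- so the offspring law at $v_k$ becomes $1+\mathrm{Poisson}(1)$ ($v_{k+1}$ plus Poisson$(1)$ bush children), while off-spine revealed vertices keep Poisson$(1)$. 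Matching (i)--(iii) to the recursion defining $\mathcal K$ yields $B_{\T_n}(X,r)\Rightarrow B_{\mathcal K}(\rho,r)$, and the almost-regular and quenched versions require only uniform versions of these estimates.

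The hard part will be making (ii) and (iii) rigorous under the bare hypotheses of regularity and connectivity: $G_n$ may mix arbitrarily slowly (barbell- and necklace-type examples), so no mixing-time or expansion input is available. One needs robust hitting- and return-probability estimates for simple random walk on $d$-regular graphs using only the degree and crude volume bounds --- both to get the self-avoidance and non-collision of branches near $X$, and to pin down the Poisson$(1)$ limit and the size-bias. I would expect the Burton--Pemantle transfer-current (negative association) representation of the UST to help control the joint law of the offspring counts at distinct revealed vertices, and some additional care is needed to show the reveal terminates after a tight number of branches and that the ``macroscopic branch'' at each revealed spine vertex is well-defined and unique.
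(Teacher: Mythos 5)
Your proposal takes a genuinely different route from the paper, and the route as sketched has a real gap at its core. The paper never attempts to prove local weak convergence directly: it computes, for each fixed $T$, an asymptotic \emph{upper bound} $\Prob(B_{\T_n}(X,r)\cong T)\le e^{-t}(k-t)/|\stab_T|+o(1)$ using Kirchhoff's formula, the spatial Markov property, negative correlation, the deterministic resistance lower bound \eqref{eq:reslowerbound}, and the Foster-type resistance estimates of \cref{sec:Foster} (in particular \cref{thm:GeneralFoster} and \cref{lem:openedges}); it then observes that these upper bounds sum to $1$ over all $T$, while by the tightness theorem (\cref{thm:tightness}) the true probabilities also sum to $1-o(1)$ over a finite family, which forces each upper bound to be attained in the limit. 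This ``sum to one'' device is what lets the paper avoid ever proving asymptotic independence of edge events or a Poisson limit for offspring counts. Your first stage (computing $\Prob(B_{\mathcal K}(\rho,r)\cong T)$ via the spine decomposition) is fine and matches, in a different order, the computation in \cref{sec:poisson}.

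The gap is in your second stage. Steps (ii) and (iii) --- the Poisson$(1)$ offspring limit at revealed vertices and the size-bias along the spine --- are precisely the content of the theorem, and you assert rather than prove them; the ``exchangeability'' argument for revealed interior vertices is circular, since those vertices are endpoints of loop-erased walks targeted at $X$, not uniform vertices, and their tree-degree distribution is exactly what is in question. The Poissonisation step needs asymptotic independence of the $d$ edge events at a given vertex together with the correct total intensity, and under bare regularity (no mixing, no expansion --- think of the necklace of complete graphs in \cref{sec:maxdiamquestion}) it is not clear Wilson's algorithm gives you access to this; the paper's substitute is the conditional-resistance computation for $\lambda_T(v_1,\ldots,v_k)$ combined with \cref{lem:openedges}. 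There is also a concrete false step: $B_{\T_n}(X,r)$ does \emph{not} stabilise after $O_{\Prob}(1)$ branches of Wilson's algorithm --- every one of the remaining $n-O(1)$ branches retains a small probability of loop-erasing into the $r$-neighbourhood of $X$, and only the \emph{total} number of such attachments over the whole run is tight (this is what \cref{thm:tightness} provides, and the paper proves it by resistance counting and negative correlation, not by a stopping argument). Without rigorous versions of (ii) and (iii), and with the stabilisation claim repaired to a global tightness statement, the proposal remains a plausible programme rather than a proof.
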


We also provide a stronger version of \cref{thm:mainthm2} (\cref{thm:mainthm2Robust}) in which the assumption of regularity can be weakened to ``almost regular'' graphs of high degree which include small perturbations of regular graphs and graphs in which the degree of most vertices are almost equal. Furthermore, in this more general setting we obtain a quenched version of \cref{thm:mainthm2} (\cref{thm:mainthm2quenched}) showing that with probability $1-o(1)$ the number of appearances of a fixed rooted tree $T$ in the UST $\T_n$ is 
$$ (1+o(1)) \frac{|V(G_n)||T_r| e^{-|V(T)|+|T_r|}}{|\stab_T|} \, .$$
For instance, with high probability the density of leaves in $\T_n$ is $(1+o(1))e^{-1}$ and the density of leaves attached to a vertex of degree $k$ is $(1+o(1))e^{-2}/(k-1)!$. These extensions are presented in \cref{sec:extensions}. \\

When $G$ is the complete graph on $n$ vertices the conclusion of \cref{thm:mainthm} was established in the pioneering work of Grimmett \cite{Gri:RandomTree}. The local limit of USTs on dense graphs (i.e., when the number of edges is proportional to the number of vertices squared) was recently investigated in \cite{HNT18} where it was shown to be a certain multi-type branching process that can be described explicitly via the limiting graphon.  The assumption of density of the graph is used in \cite{HNT18} to apply partition theorems in the spirit of Szemer\'edi's Lemma and to use ``graph-limit'' techniques to explicitly describe the local limit. These techniques are unavailable in the setting of this paper where the degree tends to infinity arbitrarily slow.

We remark that \cref{thm:mainthm} recovers the result of \cite{HNT18} in the special case when the underlying graph is dense and regular (or close to regular). We also remark that regularity is a necessary assumption for for \cref{thm:mainthm}. Indeed, if $G$ is the complete bipartite graph with $n/3$ and $2n/3$ vertices on each side, respectively, then the local limit of the UST is a branching process (conditioned to survive) in which the progeny distribution alternates between Poisson$(1/2)$ and Poisson$(2)$, see \cite{HNT18}. \\ 



In the rest of this section we formally define the notion of local limits (\cref{sec:locallimit}), discuss the Poisson$(1)$ Galton-Watson tree conditioned to survive and show that \cref{thm:mainthm} follows from \cref{thm:mainthm2} (\cref{sec:poisson}), present the most general versions of our main theorems as described above (\cref{sec:extensions}) and close with a brief summary of our notation (\cref{sec:notationorganization}).

\subsection{Local limits}\label{sec:locallimit}

The local limit of finite graphs was first defined in the seminal paper of Benjamini and Schramm \cite{BeSc}, see also \cite{MR2354165} for further background.

We denote by $\Gb$ the space of all locally finite rooted graphs viewed up to root-preserving graph isomorphisms. In other words, the elements of $\Gb$ are pairs $(G,\rho)$ where $G$ is a graph and $\rho$ is a vertex of it, and two elements $(G_1,\rho_1), (G_2,\rho_2)$ are considered equivalent if there is a graph automorphism $\varphi:V(G_1)\to V(G_2)$ for which $\varphi(\rho_1)=\rho_2$. For a pair $(G,\rho)$ and an integer $r \geq 0$ we write $B_G(\rho,r)$ for the induced graph of $G$ on the vertex set which is of graph distance at most $r$ from $\rho$. 

We endow $\Gb$ with a natural metric: the distance between $(G_1,\rho_1)$ and $(G_2,\rho_2)$ is $2^{-R}$ where $R$ is the maximal integer such that there is a root-preserving isomorphism between $B_{G_1}(\rho_1,R)$ and $B_{G_2}(\rho_2,R)$; possibly $R=\infty$. We consider the Borel $\sigma$-algebra on this space and remark that with this metric $\Gb$ is a polish space, hence we can discuss convergence in distribution on it (see \cite{MR2354165}).

We say that a law of a random element $(G,\rho)$ of $\Gb$ is the {\bf local limit} of a sequence of (possibly random) finite graphs $G_n$ if for any integer $r\geq 0$ the random rooted graphs $B_{G_n}(\rho_n,r)$ converge in distribution to $B_{G}(\rho,r)$, where $\rho_n$ is a uniformly drawn random vertex of $G_n$ that is drawn independently conditioned on $G_n$.

\subsection{Poisson Galton-Watson trees} \label{sec:poisson}

Let us now describe the local limit of  \cref{thm:mainthm}. The Poisson$(1)$ Galton-Watson tree {\bf conditioned to survive} is defined as the local limit as $n\to \infty$ of a random rooted tree drawn according to a Poisson$(1)$ Galton-Watson tree conditioned to survive $n$ generations. 
It is classical (see \cite{conceptual95}) that this limit exists and can be drawn directly by taking an infinite path starting from the root vertex and hanging a Poisson$(1)$ unconditional Galton-Watson tree on each vertex of the path; however, we will not use this fact in this paper. \\


\noindent {\bf Proof of \cref{thm:mainthm} given \cref{thm:mainthm2}.} Let $(\T,\rho)$ be an instance of the Poisson$(1)$ Galton-Watson tree conditioned to survive and let $T$ be a finite rooted tree of height $r\geq 1$. Our goal is to prove that
\be\label{eq:poissongoal} \Prob ( B_\T(\rho,r)\cong T) = {|T_r| e^{-|V(T)|+|T_r|} \over |\stab_T|} \, .\ee

Denote by $\T^u$ an instance of an unconditional Poisson$(1)$ branching process. We will prove that 
\be\label{eq:poissongoal2} \Prob ( B_{\T^u}(\rho,r) \cong T ) = {e^{-|V(T)| + |T_r|}\over |\stab_T|} \, ,\ee
which implies \eqref{eq:poissongoal} by the following argument. Let $p_n$ denote the probability that $\T^u$ survives $n$ generations. It is well known (see \cite[Theorem C]{conceptual95}) that $p_n={2 +o(1) \over n}$. Thus, the probability that $B_{\T^u}(\rho,r) \cong T$ and $\T^u$ survives $n$ generations by \eqref{eq:poissongoal2} is just 
$$ {e^{-|V(T)| + |T_r|}\over |\stab_T|} \big ( 1 - (1-p_{n-r})^{|T_r|} \big ) = (1+o(1)) {e^{-|V(T)| + |T_r|}\over |\stab_T|}  |T_r| p_n \, .$$
Hence, if we condition on the event that $\T^u$ survives $n$ generations and take $n\to\infty$ we get precisely \eqref{eq:poissongoal}.

We prove \eqref{eq:poissongoal2} by induction on $r$. When $r=1$ we have that $\stab_T=k!$ where $k$ is the degree of the root of $T$ and \eqref{eq:poissongoal2} follows. Assume now that $r \geq 2$ and let $k$ be again the degree of the root of $T$ and call its children $\{u_1,\ldots, u_k\}$. For $1 \leq i \leq k$ write $T_{u_i}$ for the subtree of $T$ induced on all the descendants of $u_i$ (that is, all the vertices of $T$ for which the unique path to $u_i$ does not visit the root of $T$). We view $T_{u_i}$ as a tree rooted at $u_i$. 

We put an equivalence relation on $\{u_1,\ldots, u_k\}$ by declaring $u_i$ equivalent to $u_j$ if and only if there exists a root-preserving automorphism between $T_{u_i}$ and $T_{u_j}$. Let $\ell \leq k$ denote the number of equivalence classes and put $k_t$ for the number of elements in the $t$-th equivalence class, where $1 \leq t \leq \ell$, so that $k=k_1 + \ldots + k_t$. For $1 \leq t\leq \ell$ we also denote by $T^{(t)}$ the rooted tree $T_{u_i}$ where $u_i$ is a representative of the $t$-th equivalence class. Since $T$ is of height $r$ at least one of the $T^{(t)}$'s must be at height $r-1$ and all others have height at most $r-1$. A moment's reflection shows that 
\be\label{eq:stabidentity} |\stab_T| = \prod_{t=1}^{\ell} k_t! \prod_{i=1}^k |\stab_{T_{u_i}}| = \prod_{t=1}^{\ell}k_t! |\stab_{T^{(t)}}|^{k_t} \, .\ee

Now, for the event $B_{\T^u}(\rho,r) \cong T$ to occur, we must first have that the root of $\T^u$ has $k$ children (with probability $e^{-1}/k!$) and that there is a partition of the $k$ children of the root into $\ell$ subsets of sizes $\{k_t\}_{t=1}^\ell$ so that if a child of the root is at the $t$-th set, the $r-1$ generations of the branching process emanating from this child is isomorphic to $T^{(t)}$. Thus, by the induction hypothesis we have that
$$ \Prob (B_{\T^u}(\rho,r) \cong T ) = {e^{-1} \over k!} {k \choose k_1, k_2,\ldots, k_t} \prod_{t=1}^\ell \Big [ {e^{-|V(T^{(t)})|+ |T^{(t)}_{r-1}|} \over |\stab_{T^{(t)}}|} \Big ]^{k_t} \, .$$
Since $|V(T)|=1+\sum_{t=1}^\ell k_t |V(T^{(t)})|$ and $|T_r|=\sum_{t=1}^\ell k_t |T^{(t)}_{r-1}|$ and by \eqref{eq:stabidentity} we get that \eqref{eq:poissongoal2} holds, concluding our proof. \qed

\subsection{Extensions} \label{sec:extensions} The assumption of regularity in Theorems \ref{thm:mainthm}, \ref{thm:mainthm2} and \ref{thm:mainthm2quenched} can often be very restrictive. For instance, the removal of a single edge from the graph violates the assumptions though it is intuitively clear that the conclusion of the theorems will not be altered. Moreover, the proof of \cref{thm:mainthm2quenched}, even in the regular case, requires an extended version of \cref{thm:mainthm2} in which the graphs involved are ``close'' to being regular. For these two reasons we now state this extension.


\begin{defin}\label{def:almost} A sequence of finite, simple connected graphs $\{G_n\}$ is called {\bf high degree almost regular} if there exists some $d(n)\to\infty$ such that
\begin{enumerate}
	\item At least $(1-o(1))|V(G_n)|$ of the vertices of $G_n$ have degree $(1\pm o(1))d(n)$, and,
	\item The sum of degrees in $G_n$ is $(1\pm o(1))d(n)|V(G_n)|$.
\end{enumerate}
\end{defin}

It is immediately deduced from this definition that the total variation distance between a uniformly chosen vertex and a vertex drawn according to the stationary distribution is $o(1)$, indeed,
\be\label{eq:uniformstationary} \sum_{v} \Big | {\deg(v) \over \sum_v \deg(v)} - {1 \over n} \Big | = o(1) \, .\ee
We comment that it is straightforward to see that if the average degree of the graphs tends to infinity, then \eqref{eq:uniformstationary} in fact implies that the graph sequence is high degree almost regular. We will not use this direction in our proof though. 

\begin{thm}\label{thm:mainthm2Robust} Let $\{G_n\}$ be a sequence of high degree almost regular graphs. Then the conclusions of Theorems \ref{thm:mainthm} and \ref{thm:mainthm2} hold.
\end{thm}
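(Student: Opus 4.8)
The core of the matter is Theorem~\ref{thm:mainthm2} in the genuinely regular case; the extension to high degree almost regular graphs is then obtained by a coupling/comparison argument built on the observation~\eqref{eq:uniformstationary} that the stationary distribution and the uniform distribution are close in total variation. So I would first prove Theorem~\ref{thm:mainthm2} for $d$-regular $G_n$, and I would use Wilson's algorithm rooted at the uniformly chosen vertex $X$. The key point is that $B_{\T_n}(X,r)$ is determined by a bounded number of loop-erased random walk paths: run LERW from various vertices, and the ball of radius $r$ around $X$ is built from at most $|V(T)|$ excursions each of length at most $r$. On a $d$-regular graph with $d\to\infty$, a random walk of bounded length started from a typical vertex is, with probability $1-o(1)$, a self-avoiding path that never backtracks and never closes a short cycle (the chance of stepping to an already-visited vertex in $t$ steps is $O(t^2/d)=o(1)$), so loop-erasure does nothing and the LERW path is just an ordinary random walk path of the prescribed length. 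This is where the hypothesis $d\to\infty$ is used in an essential way, and it is the step I expect to be the main obstacle: one must show that \emph{all} of the $O(|V(T)|)$ walks needed to reconstruct $B_{\T_n}(X,r)$ are simultaneously "short and simple" and moreover that they do not intersect each other except where forced by the tree structure.

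**From walk paths to the Poisson branching process.** Granting the above, the combinatorial structure of $B_{\T_n}(X,r)$ is generated as follows: start at $X$; the tree edge from $X$ used by the walk leads to a uniformly random neighbor; from the remaining $d-1$ neighbors, each one $u$ gets connected to $X$ in the spanning tree iff the LERW from $u$ (run until it hits the tree grown so far) takes its first step back to $X$ — but again, with probability $1-o(1)$ the LERW from $u$ either steps to $X$ immediately (probability $1/d$) or wanders off into "fresh" territory and attaches far away, contributing nothing to the ball of radius $r$. Hence the number of children of $X$ inside the ball is asymptotically Binomial$(d-1,1/d)\to$ Poisson$(1)$, and the same analysis applied recursively at each newly discovered vertex (each such vertex has $d$ or $d-1$ "free" slots, each filled with probability $\sim 1/d$ independently up to $o(1)$ errors) shows that $B_{\T_n}(X,r)$ converges in distribution to the first $r$ generations of a Poisson$(1)$ Galton-Watson tree with an extra guaranteed child at the root (the walk must leave $X$). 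Counting the $|\stab_T|$ and the $|T_r|$ factor: the extra forced edge at the root can be any of the $|V(T)|-|T_r|$ non-leaf-adjacent... more precisely, summing the Poisson weights over which initial step the walk takes produces exactly the factor $|T_r|$, and dividing by automorphisms gives the stated formula — this is precisely the computation already carried out in the "Proof of Theorem~\ref{thm:mainthm} given Theorem~\ref{thm:mainthm2}" above, run in reverse, so I would quote it rather than redo it.

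**The almost regular extension.** With Theorem~\ref{thm:mainthm2} in hand for regular graphs, the passage to high degree almost regular $G_n$ requires only that the local picture around a \emph{typical} vertex $X$ is unaffected by the $o(|V(G_n)|)$ atypical vertices and by the mild fluctuation of degrees. Here I would use two facts. First, by~\eqref{eq:uniformstationary}, running Wilson's algorithm rooted at $X$ and tracking a bounded-length walk, the walk visits an atypical vertex with probability $o(1)$ (a bounded-length walk from stationarity hits a set of stationary measure $o(1)$ with probability $o(1)$, and uniform is $o(1)$-close to stationary). Second, conditioned on never meeting an atypical vertex, every vertex the construction touches has degree $(1\pm o(1))d(n)$, so each "free slot" is still occupied with probability $(1+o(1))/d(n)$, each vertex still has $(1+o(1))d(n)$ slots, and the Binomial$\to$Poisson$(1)$ limit goes through verbatim with the $o(1)$ errors absorbed. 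Thus $\Prob(B_{\T_n}(X,r)\cong T)$ converges to the same limit, which is the conclusion of Theorem~\ref{thm:mainthm2}; and Theorem~\ref{thm:mainthm} then follows from Theorem~\ref{thm:mainthm2} exactly as shown in the excerpt, since that deduction is purely about the Poisson$(1)$ Galton-Watson tree and does not see the graph at all. The one genuinely delicate estimate throughout is controlling the probability that two of the LERW excursions collide in a way that creates a short cycle or an unwanted identification of vertices in the ball; this needs a union bound over the $O(|V(T)|^2)$ pairs of excursions, each collision probability being $O(r^2/d)=o(1)$, and it is the technical heart of the argument.
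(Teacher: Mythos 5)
Your route (Wilson's algorithm plus a Binomial-to-Poisson limit for the ``free slots'') is genuinely different from the paper's, which never runs Wilson's algorithm: the paper writes $\Prob(T(v_1,\dots,v_k)\subset\T_n)$ as a product of effective resistances via Kirchhoff's formula and the spatial Markov property, proves Foster-type concentration results showing these resistances are $(2+o(1))/d$ for almost all relevant tuples, bounds the probability that the forbidden edges are all closed by a sequential resistance estimate, and then — crucially — only proves an \emph{upper} bound $\Prob(B_{\T_n}(X,r)\cong T)\le e^{-t}(k-t)/|\stab_T|+o(1)$, upgrading it to a limit because both sides sum to $1$ over all trees of bounded size (using the tightness theorem). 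Your approach would have to produce matching two-sided estimates directly, and as written it has gaps that are not merely technical.

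The first gap is the attachment probability. The chance that a neighbor $u$ of $X$ becomes a child of $X$ is not ``$1/d$ because the LERW steps to $X$''; conditioning on the last exit from $u$ shows it equals $\Reff(u\lr \text{current tree})$-type quantities, which for the first processed neighbor is $\approx 2/d$ and decreases as the tree grows. Showing that these resistances and escape probabilities are $(1+o(1))$ times the right value for \emph{most} edges and tuples is exactly the content of the paper's Section 3 (Foster's theorem and its variants); your observation that a bounded-length walk is self-avoiding with probability $1-O(r^2/d)$ does not substitute for it, since it says nothing about, e.g., bottleneck edges whose resistance is order $1$. The second gap is more structural: to certify $B_{\T_n}(X,r)\cong T$ you need \emph{negative} information about all $\Theta(d)$ graph-neighbors of every internal vertex of the ball (each must either not attach, or attach only at tree-distance $>r$), so the ball is not determined by $O(|V(T)|)$ excursions, and the asserted near-independence of these $\Theta(d)$ events of probability $\Theta(1/d)$ each is precisely the hard part — a union bound over $O(|V(T)|^2)$ pairs of short excursions does not reach it. (Note also that ``wanders off into fresh territory'' controls graph distance, not tree distance; a LERW can hit the existing tree at a vertex that is within tree-distance $r$ of $X$.) The paper's event $\lambda_T$ and its sequential bound $\prod_\ell\bigl(1-\tfrac{1}{d-k+1}-\tfrac{1}{kd-k^2-\ell+1}\bigr)$ is what replaces this missing independence argument. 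Your reduction of the almost-regular case to the regular one via \eqref{eq:uniformstationary} is in the right spirit and close to what the paper does, but it inherits the gaps above.
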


Lastly, the following gives the quenched version of \cref{thm:mainthm2Robust}

\begin{thm}\label{thm:mainthm2quenched} Let $\{G_n\}$ be a sequence of high degree almost regular graphs. Let $\T_n$ be a UST of $G_n$ and $T$ be a fixed rooted tree of height $r\geq 1$. Denote by $Y_n(T)$ the number of vertices $v$ of $G_n$ satisfying $B_{\T_n}(v,r) \cong T$. Then with probability tending to $1$ as $n\to \infty$ we have that 
$$ Y_n(T) = (1+o(1)) \frac{|V(G_n)| |T_r| e^{-|V(T)|+|T_r|}}{|\stab_T|} \, .$$
\end{thm}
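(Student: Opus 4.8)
The plan is a second moment argument that invokes \cref{thm:mainthm2Robust} twice: once for $\E Y_n(T)$, and once more for the UST conditioned on the tree near a vertex. Write $N=|V(G_n)|$, $c_T=|T_r|e^{-|V(T)|+|T_r|}/|\stab_T|$, let $\mu_n=c_TN$ be the claimed value, and set $\mathcal A_v=\{B_{\T_n}(v,r)\cong T\}$, so $Y_n(T)=\sum_v\1_{\mathcal A_v}$. Since a uniform vertex $X$ satisfies $\Prob(\mathcal A_X)=N^{-1}\E Y_n(T)$, \cref{thm:mainthm2Robust} gives $\E Y_n(T)=(1+o(1))\mu_n$; also $N\to\infty$, because $G_n$ is simple and has vertices of degree tending to infinity, so $\mu_n\to\infty$. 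By Chebyshev's inequality it suffices to prove $\E[Y_n(T)^2]=(1+o(1))\mu_n^2$, since together with $\E Y_n(T)=(1+o(1))\mu_n$ this gives $\mathrm{Var}(Y_n(T))=o(\mu_n^2)$ and hence $Y_n(T)=(1+o(1))\mu_n$ with probability $1-o(1)$.

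The crux is the conditional first moment estimate
\[
\E\big[\,Y_n(T)\mid B_{\T_n}(v,r)=F\,\big]=(1+o(1))\mu_n,
\]
uniform over $v$ and over every realizable shape $F\cong T$ of the ball around $v$. Granting it, $\E[\1_{\mathcal A_v}Y_n(T)]=(1+o(1))\mu_n\Prob(\mathcal A_v)$ for each $v$, and summing over $v$ while using $\sum_v\Prob(\mathcal A_v)=\E Y_n(T)=(1+o(1))\mu_n$ gives $\E[Y_n(T)^2]=\sum_v\E[\1_{\mathcal A_v}Y_n(T)]=(1+o(1))\mu_n^2$. To establish the estimate, note that $\{B_{\T_n}(v,r)=F\}$ is exactly the event that $\T_n$ contains the $|V(T)|-1$ edges of $F$ and avoids the edge set $E^-$ consisting of all edges of $G_n$ from a vertex of $B_F(v,r-1)$ to a vertex outside $V(F)$: the forced edges place $V(F)$ inside the radius-$r$ ball at the correct distances (as $\T_n$ is a tree), and forbidding $E^-$ is precisely what prevents the ball from reaching anything else. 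By the standard description of conditioned uniform spanning trees, the conditional law of $\T_n$ is then the UST of $G_n':=(G_n\setminus E^-)/E(F)$.

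Now $|E^-|=O_T(d(n))$ and $E^-$ is incident to the $O_T(1)$ vertices of $B_F(v,r-1)$, so deleting $E^-$ changes the degree of all but $O_T(1)$ vertices by at most $O_T(1)$, and contracting the $O_T(1)$ edges of $F$ deletes $O_T(1)$ vertices; hence $G_n'$ is connected (the radius-$r$ vertices of $F$ retain every incident edge), and---after discarding the $O_T(1)$ parallel edges the contraction may create---still satisfies \cref{def:almost} with the same $d(n)$. Applying \cref{thm:mainthm2Robust} to such graphs $G_n'$ yields $\sum_{u\in V(G_n')}\Prob(B_{\T_{G_n'}}(u,r)\cong T)=(1+o(1))|V(G_n')|c_T=(1+o(1))\mu_n$, with error uniform in $F$ (a violating sequence of $F$'s would produce high degree almost regular graphs contradicting \cref{thm:mainthm2Robust}). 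Since $B_{\T_n}(u,r)$ equals $B_{\T_{G_n'}}(u,r)$ whenever the latter misses the contracted vertex, and---using tightness of radius-$r$ ball sizes, which is part of the local convergence in \cref{thm:mainthm2Robust}---all but $o(N)$ vertices $u$ have their radius-$r$ ball miss the contracted vertex with probability $1-o(1)$, the conditional first moment estimate follows.

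The step I expect to cause the most trouble is not the second moment bookkeeping but the uniformity in the last two paragraphs: one must check (i) that $G_n'$ really obeys \cref{def:almost} for every admissible $F$, tracking degrees after deleting $E^-$ and contracting $E(F)$ and confirming that the possible parallel edges at the contracted vertex are harmless, and (ii) that the contracted vertex, whose degree is of order $|T_r|\,d(n)$ and hence not typical, still has a tight radius-$r$ ball, so that conditioning near $v$ does not corrupt the local statistics near the $o(N)$ exceptional vertices. Both are of the same flavour as estimates needed for \cref{thm:mainthm2Robust}; the single new ingredient is the elementary observation that $\{B_{\T_n}(v,r)=F\}$ amounts to forcing $O_T(1)$ and forbidding $O_T(d(n))$ edges, all localized near $v$, which exhibits the conditioned UST as the UST of a graph still within the scope of \cref{thm:mainthm2Robust}.
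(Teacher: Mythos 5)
Your overall architecture coincides with the paper's: a second moment computation in which one conditions on the ball $B_{\T_n}(v,r)$, uses the spatial Markov property to identify the conditional law of the remaining tree with the UST of a modified graph $G_n'$, checks that $G_n'$ is still high degree almost regular, and reapplies \cref{thm:mainthm2Robust} to get a conditional count of $(1+o(1))\mu_n$; Chebyshev then finishes. That part matches the paper.

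The gap is in your final step, where you dispose of the vertices $u$ whose radius-$r$ ball in $\T_{G_n'}$ contains the contracted vertex $w_*$. You appeal to ``tightness of radius-$r$ ball sizes, which is part of the local convergence in \cref{thm:mainthm2Robust}'', but the tightness statement (\cref{thm:tightnessRobust}) concerns the ball around a \emph{stationary random} root and gives no control over the ball around the one fixed vertex $w_*$. The number of offending $u$ is exactly $|B_{\T_{G_n'}}(w_*,r)|$, and $w_*$ is genuinely atypical: its degree is about $|T_r|\,d(n)$, and the radius-$r$ vertices of $F$ absorbed into it need not be ``good'', so its incident edges can carry effective resistance of order $1$; already $\E\,\deg_{\T_{G_n'}}(w_*)$ can then be of order $d(n)$, and for $r\ge 2$ nothing you cite rules out this quantity being comparable to $|V(G_n)|$ for adversarial $F$ (the paper's own example in the concluding remarks shows a single vertex of a regular graph can have tree-degree $d/2$ deterministically). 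So the claim that all but $o(N)$ vertices have their ball miss $w_*$ with probability $1-o(1)$, uniformly over $v$ and $F$, requires a separate argument. The paper sidesteps this entirely by splitting the double sum $\sum_{u,v}$ into pairs with $B_{\T_n}(u,r)\cap B_{\T_n}(v,r)=\emptyset$ and the rest \emph{before} conditioning: the intersecting pairs contribute at most $\sum_v \E\,|B_{\T_n}(v,2r)| = |V(G_n)|\,\E\,|B_{\T_n}(X,2r)|$ with $X$ uniform, which is $o(|V(G_n)|^2)$ by the tightness theorem for a random root; only the disjoint pairs are then fed into the spatial-Markov computation on $G_n'$. Reorganizing your proof this way removes the problematic step, and the rest of your argument goes through.
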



\subsection{Notation} \label{sec:notationorganization}
\begin{itemize}
\item Given a graph $G$ we write $V(G)$ and $E(G)$ for its vertex and edge set, respectively. 
\item For an integer $k\geq 1$ we write $[k]$ for $\{1,\ldots, k\}$. 
\item We write $\deg(u)$ for the degree of a vertex $u$.
\item For vertices $u,v\in V(G)$ we write $u\sim v$ if there is an edge between them.
\item For two positive sequences $\{a_n\}, \{b_n\}$ we write $a_n = O(b_n)$ when there exists a constant $C$ such that $a_n \leq C b_n$ for all $n$, and $a_n = o(b_n)$ when $a_n/b_n$ tends to $0$.
\end{itemize}





\section{Preliminaries}
\subsection{Random walks and electric networks} We give here a brief review of the theory of electric networks and refer the reader to \cite[Chapter 2]{LyPe:ProbabilityTrees} or \cite[Chapter 2]{NStFlour18} for more details. 

A {\bf network} is a connected graph $G$ equipped with positive edge weights $c(u,v)$ for any edge $(u,v)\in E(G)$. We write $\pi(v) = \sum_{u : u \sim v} c(u,v)$, where possibly $u=v$ if there is a loop at $v$. The {\bf network random walk} on the network is a reversible Markov chain $\{X_t\}_{t=0}^\infty$ on $V(G)$ with transition probabilities 
$$ p(u,v) = {c(u,v) \over \pi(v)} \, .$$
We write $\Prob_u(\cdot)$ for the probability measure of the chain conditioned on $X_0=u$ and $\E_u$ for the corresponding expectation. We denote by $\tau_u$ and $\tau^+_u$ the stopping times
$$ \tau_u = \min \{ t \geq 0 : X_t = u\} \qquad \tau^+_u = \min \{ t \geq 1 : X_t = u\} \, ,$$
which are of course equal unless $X_0=u$. 

The \emph{effective resistance} between two vertices $u,v$ of the network is the voltage difference between $u$ and $v$ when unit current flows from $u$ to $v$. We will not use this definition directly but rather a probabilistic interpretation of it which can serve as its definition. We define the {\bf effective resistance} between $u$ and $v$ in the network $G$ by
\be\label{def:reff} \Reff(u \lr v ; G) = {1 \over \pi(u) \Prob_u(\tau_v < \tau^+_u) } \, .\ee
We often write $\Reff(u \lr v)$ when the underlying network is evident from the context.

\subsubsection{Nash-Williams inequality} The following is useful method of bounding the resistance from below and is due to Nash-Williams \cite{Nash-Williams}. We say that a subset of edges of the network {\bf separates} the vertices $a$ and $z$ if every path from $a$ to $z$ uses at least one edge of the subset. If $\Pi_1, \ldots, \Pi_n$ are disjoint subsets of edges separating $a$ and $z$, then 
$$ \Reff(a \lr z) \geq \sum_{i=1}^n \Big ( \sum_{e \in \Pi_i} c_e \Big )^{-1} \, ,$$
see \cite[Chapter 2.5]{LyPe:ProbabilityTrees}. An immediate application of this inequality is that 
\be\label{eq:reslowerbound} \Reff(u \lr v) \geq {1 \over \deg(u) + 1} + {1 \over \deg(v)+1} \, ,\ee
whenever $u\neq v$ are vertices of a simple graph and all the conductances are unit. Indeed, if $u$ and $v$ do not share an edge the lower bound can be improved by dropping the two $+1$'s in the denominators since the edges emanating from $u$ separate $u$ from $v$ and similarly for the edge emanating from $v$. If $u$ and $v$ do share an edge, we replace that edge with a path of length $2$ and assign conductance $2$ to the two edges of the path and apply again Nash-Williams inequality. 

\subsubsection{The commute-time identity} The commute time between two vertices $a\neq z$ in a network is $\E_a \tau_z + \E_z \tau_a$. In other words, it is the expected time it takes to the random walk started at $a$ to visit $z$ and then go back to $a$. It turns out \cite{commute} that this quantity is just the rescaled resistance between $a$ and $z$ (see also \cite[Corollary 2.21]{LyPe:ProbabilityTrees}):
\be\label{eq:commute} \E_a \tau_z + \E_z \tau_a = 2|E(G)| \Reff(a \lr z) \, ,\ee
in any network $G$ with unit conductances.

\subsection{Uniform spanning trees} Let $G$ be a finite connected graph. The set of spanning trees of $G$ is non-empty and finite hence we may draw one uniformly at random; denote by $\T$ the resulting spanning tree. We call $\T$ a {\bf uniform spanning tree} (UST) of $G$. In this paper we will only use two basic properties of the UST.

\subsubsection{Kirchhoff's formula} Kirchhoff \cite{Kirchhoff} discovered a fundamental relation between the uniform spanning tree and electric networks. Let $e=(x,y)\in E(G)$. Then
\be\label{eq:kirchhoff} \Prob ( e \in \T ) = \Reff(x \lr y; G) \, .\ee

\subsubsection{Spatial Markov property of the UST} Let $A,B\subset E(G)$ be two disjoint subset of edges of $G$. We would like to condition on the event $A \subset \T$ and $B \cap \T = \emptyset$. For this event to have positive probability we must have that $A$ does not contain a cycle and that when erasing the edges of $B$ from $G$ we remain with a connected graph. It turns out that this conditional measure can be described as drawing a UST on a modified graph, see \cite[Chapter 4]{LyPe:ProbabilityTrees}. We denote by $G/A-B$ the graph obtained from $G$ by contracting the edges of $A$ and erasing the edges of $B$. 
\begin{prop} \label{prop:spatialMarkov} Let $G$ be a finite connected graph and $A,B$ two disjoint subsets of edges such that $G-B$ is connected and $A$ has no cycles. Then the law of the UST on $G$ conditioned on the event $A \subset \T$ and $B\cap T=\emptyset$ equals the law of the UST on $G/A-B$, viewed as a random edge subset of $G$, union the edges of $A$.
\end{prop}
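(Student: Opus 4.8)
The plan is to prove separately the two conditionings — deleting the edges of $B$ and contracting the edges of $A$ — and then compose them. The single underlying principle is that a UST is by definition the \emph{uniform} law on the finite, nonempty set of spanning trees of a graph, so each step amounts to exhibiting a bijection between two such sets, and a bijection automatically pushes the uniform law forward to the uniform law. Before conditioning one should record that the event in question has positive probability: since $A \subseteq E(G-B)$ is acyclic and $G-B$ is connected, $A$ extends greedily to a spanning tree of $G-B$, which is a spanning tree of $G$ containing $A$ and disjoint from $B$.

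\emph{Deletion.} A spanning tree $T$ of $G$ with $T \cap B = \emptyset$ uses only edges of $G-B$, so it is a spanning tree of $G-B$; conversely, a spanning tree of the connected graph $G-B$ spans $G$ and is a spanning tree of $G$ disjoint from $B$. Hence conditioning a UST of $G$ on $\{\T \cap B = \emptyset\}$ yields the uniform law on spanning trees of $G-B$, i.e. a UST of $G-B$.

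\emph{Contraction.} I claim that $T \mapsto T \setminus A$ is a bijection from the set of spanning trees $T$ of $G$ with $A \subseteq T$ onto the set of spanning trees of $G/A$, with inverse $S \mapsto S \cup A$, where edges of $G/A$ are identified with the corresponding edges of $G$. The checks: if $S$ is a spanning tree of $G/A$, then $S \cup A$ is connected in $G$ — inside each vertex of $G/A$ (a connected component of the graph $(V(G), A)$) the edges of $A$ connect all its vertices, while $S$ connects these components to each other; moreover $(V(G), A)$ is a forest with $|A|$ edges, hence has $|V(G)| - |A|$ components, so $|V(G/A)| = |V(G)| - |A|$ and therefore $|S \cup A| = (|V(G/A)| - 1) + |A| = |V(G)| - 1$; a connected subgraph of $G$ on all of $V(G)$ with $|V(G)| - 1$ edges is a spanning tree, and it contains $A$. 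Conversely, if $A \subseteq T$ for a spanning tree $T$ of $G$, then $T \setminus A$ is a spanning acyclic subgraph of $G/A$ with $|T| - |A| = |V(G/A)| - 1$ edges, hence a spanning tree of $G/A$. (Loops of $G/A$ — which arise only from edges $e \notin A$ whose endpoints lie in a single $A$-component — lie in no spanning tree of $G/A$, consistent with $A \cup \{e\}$ containing a cycle so that $e$ lies in no $T \supseteq A$.) The two maps are evidently mutually inverse, so conditioning a UST of $G$ on $\{A \subseteq \T\}$ and discarding the edges of $A$ yields the UST of $G/A$.

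\emph{Composition and the main point.} Conditioning on $\{A \subseteq \T\} \cap \{\T \cap B = \emptyset\}$ is the same as first conditioning on $\{\T \cap B = \emptyset\}$, which by the deletion step gives a UST $\T'$ of $G-B$, and then conditioning $\T'$ on $\{A \subseteq \T'\}$. Since $A \subseteq E(G-B)$ is acyclic and $(G-B)/A$ is connected (being a quotient of a connected graph), the contraction step applies to $G-B$ and produces the UST of $(G-B)/A = G/A - B$, which, re-lifted to $E(G)$ and unioned with $A$, is exactly the asserted law. I expect the only genuinely delicate point to be the edge-count bijection of the contraction step — and, relatedly, the bookkeeping needed to view all of these measures on the common state space of edge subsets of $G$, so that "conditioning in two stages" is literally conditioning on the intersection; everything else is routine.
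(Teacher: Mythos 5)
Your proof is correct. The paper does not prove this proposition itself but cites \cite[Chapter 4]{LyPe:ProbabilityTrees}, and your argument --- the deletion--contraction bijection on spanning trees, composed and pushed forward through the uniform measure --- is essentially the standard proof given in that reference.
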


Lastly we recall the negative correlations of the UST. It is an immediate corollary of \eqref{eq:kirchhoff} and \cref{prop:spatialMarkov} together with Rayleigh's monotonicity for electric networks \cite[Chapter 2]{LyPe:ProbabilityTrees} that the UST has negative correlations. That is, if $e\neq f$ are two edges of a finite connected graph and $\T$ is a UST of $G$, then
\be\label{eq:negativecorr} \Prob( e \in \T \and f \in \T ) \leq \Prob(e\in \T) \Prob( f \in \T) \, ,\ee
see \cite[Chapter 4]{LyPe:ProbabilityTrees}.

\section{Foster's Theorem and variants}\label{sec:Foster}

Recall that Foster's Theorem \cite{Foster48} (which also follows from \eqref{eq:kirchhoff} immediately) asserts that on any connected graph $G$ 
\be\label{thm:foster} \sum_{\{x,y\}\in E(G)} \Reff(x \lr y) = |V(G)|-1 \, .\ee
When $G$ is a regular graph with degree $d$, its number of edges is ${|V(G)|d \over 2}$. Hence Foster's Theorem in this case states that 
\be\label{eq:fostereff} \Exp \Reff(X_0 \lr X_1) = {2  \over d} - {2 \over |V(G)|d} \, ,\ee
where $\{X_0,X_1\}$ is a uniformly drawn edge of $G$, or equivalently, $X_0$ is a uniformly drawn vertex of $G$ and $X_1$ is an independent uniform neighbor of $X_0$. By \eqref{eq:reslowerbound} the resistance between two vertices in a $d$-regular graph is deterministically lower bounded by ${2 \over d+1}$. Since this bound is rather close to the expectation above, it gives a useful concentration estimate for the random variable $\Reff(X_0 \lr X_1)$.

\begin{lemma} \label{lem:highrescount} On any $d$-regular graph $G$ and for any $\e>2/d$ the number of edges $e=(x,y)$ with $\Reff(x \lr y) \geq \e$ is at most ${|V(G)| \over \e d - 2}$.
\end{lemma}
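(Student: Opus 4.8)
The plan is to use the expectation formula from Foster's theorem, namely \eqref{eq:fostereff}, together with the deterministic lower bound \eqref{eq:reslowerbound} via a first-moment/Markov-type argument. Writing $n = |V(G)|$ and letting $\{X_0, X_1\}$ be a uniformly chosen edge of $G$, we know $\E \Reff(X_0 \lr X_1) = \tfrac{2}{d} - \tfrac{2}{nd} \le \tfrac{2}{d}$, while $\Reff(x \lr y) \ge \tfrac{2}{d+1} \ge 0$ deterministically for every edge. So the resistances are nonnegative random variables with controlled mean, and we simply want to bound how many of them can be large.

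First I would set up the counting. Let $N_\e$ denote the number of edges $e = (x,y)$ with $\Reff(x \lr y) \ge \e$. Since $G$ is $d$-regular it has $nd/2$ edges, so a uniformly chosen edge lands in this set with probability $N_\e / (nd/2)$. By Markov's inequality applied to the nonnegative random variable $\Reff(X_0 \lr X_1)$,
\begin{equation*}
\frac{N_\e}{nd/2} = \Prob\big( \Reff(X_0 \lr X_1) \ge \e \big) \le \frac{\E \Reff(X_0 \lr X_1)}{\e} \le \frac{2/d}{\e} = \frac{2}{\e d} \, ,
\end{equation*}
which gives $N_\e \le n/\e$. This is close to the claimed bound but slightly weaker, so the small improvement to $\e d - 2$ in the denominator must come from using the deterministic lower bound to "subtract off" the guaranteed contribution of every edge.

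To get the sharper constant I would instead apply Markov's inequality to the shifted variable $\Reff(X_0 \lr X_1) - \tfrac{2}{d}$, which is not quite nonnegative; a cleaner route is the identity
\begin{equation*}
\E \Reff(X_0 \lr X_1) \ge \e \cdot \Prob\big(\Reff \ge \e\big) + \tfrac{2}{d} \cdot \Prob\big(\Reff < \e\big) \, ,
\end{equation*}
valid because on the event $\{\Reff \ge \e\}$ the resistance is at least $\e$ and on the complement it is at least $\tfrac{2}{d}$ (using $\e > 2/d$ so that $\tfrac{2}{d+1} < \tfrac{2}{d} \le \e$, and noting $\tfrac{2}{d+1}$ can be replaced by the crude bound $\tfrac 2d$ only when convenient — actually one should be careful and just use $\Reff \ge 0$ on neither part; let me instead bound directly). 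Writing $p = \Prob(\Reff \ge \e)$, we get $\tfrac 2d \ge \E\Reff \ge \e p + \tfrac{2}{d}(1-p)$ only if the lower bound on the complement is $\tfrac 2d$, which does not hold; the correct guaranteed bound there is $\tfrac{2}{d+1}$. Hmm — so the honest inequality is $\tfrac 2d \ge \E \Reff \ge \e p$, giving $p \le \tfrac{2}{\e d}$ again. The extra "$-2$" therefore should come from keeping the $\tfrac{2}{d}$-versus-$\tfrac{2}{d+1}$ slack, or from not discarding the $-\tfrac{2}{nd}$ term; I would reconcile this by writing $\E \Reff = \tfrac 2d(1 - \tfrac 1n)$ and bounding $\E\Reff \ge \e p$ over the upper event plus using a slightly better per-edge bound, and solving the resulting linear inequality for $N_\e$, arriving at $N_\e \le \tfrac{n}{\e d - 2}$.

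The main obstacle is precisely this last bookkeeping: getting the constant exactly right so that the denominator reads $\e d - 2$ rather than merely $\e d$. The resolution is to exploit the deterministic lower bound $\Reff \ge \tfrac{2}{d+1} > \tfrac{2}{d} - \tfrac{2}{d^2}$, subtract the total "baseline" contribution $\tfrac{nd}{2}\cdot\tfrac{2}{d+1}$ from the edge-sum $\sum_e \Reff(e) = n - 1$ (Foster), and observe that the remaining excess, which is at least $(\e - \tfrac{2}{d+1}) N_\e$, is at most $n - 1 - \tfrac{nd}{d+1}$; simplifying $n - 1 - \tfrac{nd}{d+1} = \tfrac{n}{d+1} - 1 \le \tfrac{n}{d+1}$ and dividing by $\e - \tfrac{2}{d+1} = \tfrac{\e(d+1) - 2}{d+1}$ yields $N_\e \le \tfrac{n}{\e(d+1) - 2} \le \tfrac{n}{\e d - 2}$, as claimed. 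The only point requiring care is that $\e > 2/d$ guarantees $\e(d+1) - 2 > 0$ so the division is legitimate.
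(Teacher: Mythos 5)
Your final argument is correct and is essentially the paper's own proof: both combine Foster's Theorem $\sum_e \Reff(e)=|V(G)|-1$ with the Nash--Williams lower bound \eqref{eq:reslowerbound} on every edge and solve the resulting linear inequality for $N_\e$ (you keep the bound $\tfrac{2}{d+1}$ exactly, yielding the marginally sharper $N_\e \le |V(G)|/(\e(d+1)-2)$, while the paper relaxes it to $\tfrac2d-\tfrac{2}{d^2}$). The exploratory detour through Markov's inequality is harmless but unnecessary, and its bound $n/\e$ is weaker by roughly a factor of $d$ when $\e \gg 2/d$, not just ``slightly'' weaker.
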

\begin{proof}
Denote by $N_\e$ the number of such edges. By \eqref{eq:reslowerbound} we bound $\Reff(x\lr y) \geq {2 \over d+1} \geq {2 \over d} - {2 \over d^2}$. Hence by Foster's Theorem \eqref{thm:foster}
$$ |V(G)|-1 \geq N_\e \e + \Big ( {|V(G)|d \over 2} - N_\e \Big ) (2/d - 2/d^2) \, ,$$
giving the required upper bound on $N_\e$. \qedhere
\end{proof}

Our first variant is an estimate of the resistance between two endpoints of a longer random walk.

\begin{lemma}\label{lem:fosterRW} Let $(X_0,\ldots,X_k)$ be a $k$-step random walk, $k\geq 1$, on a simple $d$-regular graph $G$ starting from a uniformly drawn vertex $X_0$. Then
$$ \Exp \Reff(X_0 \lr X_k) \leq {2 \over d} + {2(k-1) \over d^2}  \, .$$
\end{lemma}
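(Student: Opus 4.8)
The plan is to bound $\Reff(X_0 \lr X_k)$ by comparing the $k$-step walk to a superposition of its individual steps, using the triangle inequality for effective resistance together with Foster's theorem applied to each step. First I would recall that effective resistance is a metric on $V(G)$, so that
$$ \Reff(X_0 \lr X_k) \leq \sum_{i=0}^{k-1} \Reff(X_i \lr X_{i+1}) \, . $$
Taking expectations, it suffices to control $\Exp \Reff(X_i \lr X_{i+1})$ for each $i \in \{0,\ldots,k-1\}$. The key point is that for each fixed $i$, the pair $(X_i, X_{i+1})$ is a single step of the random walk; since $X_0$ is uniform and the uniform distribution on a regular graph is stationary, $X_i$ is also uniform, and hence $\{X_i, X_{i+1}\}$ is a uniformly drawn edge of $G$ (traversed in a uniformly random direction). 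Consequently \eqref{eq:fostereff} gives $\Exp \Reff(X_i \lr X_{i+1}) = \tfrac{2}{d} - \tfrac{2}{|V(G)|d} \leq \tfrac{2}{d}$ for every $i$.

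Summing over the $k$ steps would only yield the crude bound $\Exp \Reff(X_0 \lr X_k) \leq 2k/d$, which is too weak; the stated bound $\tfrac{2}{d} + \tfrac{2(k-1)}{d^2}$ is much sharper. To recover it, I would instead split off the contribution of the deterministic lower bound \eqref{eq:reslowerbound}: for each step, $\Reff(X_i \lr X_{i+1}) \geq \tfrac{2}{d+1} \geq \tfrac{2}{d} - \tfrac{2}{d^2}$. Writing
$$ \Reff(X_i \lr X_{i+1}) = \Big(\tfrac{2}{d} - \tfrac{2}{d^2}\Big) + \Big(\Reff(X_i \lr X_{i+1}) - \tfrac{2}{d} + \tfrac{2}{d^2}\Big) \, , $$
the second bracket is nonnegative and has expectation at most $\tfrac{2}{d} - \big(\tfrac{2}{d} - \tfrac{2}{d^2}\big) = \tfrac{2}{d^2}$ by \eqref{eq:fostereff}. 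Now sum the triangle inequality over $i = 0, \ldots, k-1$: the walk from $X_0$ to $X_k$ genuinely needs only one ``full'' step, so I would keep the full estimate $\tfrac{2}{d}$ for the step $i=0$ and use only the excess $\tfrac{2}{d^2}$ for each of the remaining $k-1$ steps. Concretely, bound
$$ \Exp \Reff(X_0 \lr X_k) \leq \Exp \Reff(X_0 \lr X_1) + \sum_{i=1}^{k-1} \Exp\Big(\Reff(X_i \lr X_{i+1}) - \tfrac{2}{d} + \tfrac{2}{d^2}\Big) + \sum_{i=1}^{k-1}\Big(\tfrac{2}{d} - \tfrac{2}{d^2}\Big) \, , $$
which is \emph{not} quite what is wanted either. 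The correct bookkeeping is: $\Reff(X_0 \lr X_k) \leq \Reff(X_0 \lr X_1) + \sum_{i=1}^{k-1}\Reff(X_i\lr X_{i+1})$, and for each of the last $k-1$ terms write $\Reff(X_i\lr X_{i+1}) = \tfrac{2}{d} - \text{(something)} + \text{(excess)}$; but since we are adding, not subtracting, the right move is to peel the whole thing differently — this is where care is needed.

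Here is the clean version I would actually write. Using the metric property and then \eqref{eq:fostereff} for the first step and the decomposition above for the others:
$$ \Exp\Reff(X_0\lr X_k) \leq \sum_{i=0}^{k-1}\Exp\Reff(X_i\lr X_{i+1}) = k\Big(\tfrac{2}{d} - \tfrac{2}{|V(G)|d}\Big) \, , $$
and this is genuinely $\approx 2k/d$, so the metric bound alone cannot give the claim. The resolution must be that one does \emph{not} use the triangle inequality for all $k$ steps at once; instead one uses a telescoping/union-type argument on the \emph{event structure} of the walk, or a cleverer path in the network. I expect \textbf{the main obstacle} to be precisely this: extracting the $1/d^2$ improvement requires more than the triangle inequality — most likely one argues that $\Reff(X_0 \lr X_k) \le \Reff(X_0 \lr X_1) + \Reff(X_1 \lr X_k)$ and induct on $k$, controlling $\Exp[\Reff(X_1 \lr X_k) \mid X_1]$ via the stationarity of $X_1$ and the inductive hypothesis applied to the $(k-1)$-step walk from $X_1$, while the extra $2/d^2$ per step comes from replacing the crude $2/d$ bound for the intermediate steps by the \emph{gap} between Foster's expectation $2/d$ and the deterministic floor $2/(d+1)$. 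Making the induction close with exactly the constant $2(k-1)/d^2$ — as opposed to $2k/d$ — is the delicate point, and I would spend the bulk of the proof setting up that recursion carefully.
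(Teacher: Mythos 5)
There is a genuine gap: the proposal never arrives at a proof, and the route it ultimately gestures at cannot work. You correctly observe that the triangle inequality for effective resistance gives only $\Exp\Reff(X_0\lr X_k)\leq k\bigl(\tfrac{2}{d}-\tfrac{2}{|V(G)|d}\bigr)\approx \tfrac{2k}{d}$, but the proposed fix --- inducting via $\Reff(X_0\lr X_k)\leq \Reff(X_0\lr X_1)+\Reff(X_1\lr X_k)$ and harvesting the ``gap'' between Foster's expectation $2/d$ and the floor $2/(d+1)$ --- fails for the same reason the naive sum does. By \eqref{eq:reslowerbound} the resistance between any two \emph{distinct} vertices is at least $\tfrac{2}{d}-\tfrac{2}{d^2}$, so each leg of any triangle-inequality decomposition costs essentially a full $2/d$ unless its endpoints coincide, an event of probability only $O(1/d)$. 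Concretely, the induction you sketch gives $\Exp\Reff(X_0\lr X_k)\leq \Exp\Reff(X_0\lr X_1)+\Exp\Reff(X_1\lr X_k)\leq \tfrac{2}{d}+\tfrac{2}{d}+O(k/d^2)$, i.e.\ roughly $4/d$ already for two legs; the per-step ``excess over the floor'' is $O(1/d^2)$ and cannot absorb the extra $2/d$. No bookkeeping of that excess closes the recursion to $\tfrac{2}{d}+\tfrac{2(k-1)}{d^2}$.

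The paper's proof takes a different route that bypasses subadditivity of resistance entirely: it uses the commute-time identity $\E_a\tau_z+\E_z\tau_a=2|E(G)|\Reff(a\lr z)$ and bounds $\E\,\E_{X_k}\tau_{X_0}$ directly. Since $G$ is regular, $\E_{x_0}\tau^+_{x_0}=|V(G)|$, and summing this over $k$-step walks that avoid $x_0$ gives an a priori bound on $\sum \E_{x_k}\tau_{x_0}$ for such walks; the walks that do revisit $x_0$ are handled by decomposing according to the last visit and reusing the same bound, which is exactly where the correction term $|V(G)|\cdot\tfrac{k-1}{d}$ (hence, after dividing by $2|E(G)|=|V(G)|d$, the $\tfrac{2(k-1)}{d^2}$) comes from. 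Reversibility from a uniform start then converts the one-sided hitting-time bound into a commute-time bound. If you want to salvage your outline, the lesson is that the $1/d^2$ improvement lives at the level of hitting times (where revisits to $X_0$ produce genuine cancellation), not at the level of resistances of individual steps.
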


\noindent{\bf Remark.} In fact the proof gives that 
$$ \Exp \Reff(X_0 \lr X_k) \leq {2 \over d} + {2(k-1) \over d^2} - {2 k \over |V(G)|d} \big ( 1-{1 \over d} \big )^{k-1} - {2k \over |V(G)|d(d-1)} \sum_{i=1}^{k-1} \big ( 1 - {1 \over d} \big )^{i}  \, ,$$
which is an equality when $k=1$ by \eqref{eq:fostereff}. 

\begin{proof} In any irreducible finite Markov chain the expected return time from a state to itself equals the inverse of its stationary mass. Thus, since $G$ is regular we have that $\E_{x_0}\tau^+_{x_0} = |V(G)|$ for any vertex $x_0$ of $G$. Hence for any vertex $x_0$ and $k\geq 1$
\begin{eqnarray}\label{eq:x1return} |V(G)| \geq \E_{x_0} \tau^+_{x_0} {\bf 1} _{\{\tau^+_{x_0} > k\}} &\geq& {1 \over d^{k}} \sum_{\substack{(x_1,\ldots,x_k) \\ x_i \neq x_0 \forall i=1,\ldots,k}} \big [ \E_{x_k} \tau_{x_0} + k \big ]
\, ,\end{eqnarray}
where $(x_1,\ldots,x_k)$ is a walk of length $k$ in $G$ starting from a neighbor $x_1$ of $x_0$.
On the other hand we have that
$$ \E \E _{X_k} \tau_{x_0} = {1 \over d^{k} } \sum_{\substack{(x_1, \ldots, x_k)}} \E_{x_k} \tau_{x_0} \, .$$
The last sum contains all the terms on the right hand side of \eqref{eq:x1return} except those for which there is an $i\in[k-1]$ for which $x_{i}=x_0$ (if $x_k=x_0$ the hitting time is $0$ so the case $i=k$ is dismissed). We let $i$ be the last such index and by \eqref{eq:x1return} we get 
\be\label{eq:xkhitx1} \E \E _{X_k} \tau_{x_0} \leq |V(G)| - k \big (1 - {1 \over d} \big )^{k-1} + {1 \over d^{k}} \sum_{i=1}^{k-1} \sum_{\substack{(x_1,\ldots,x_{i}) \\ x_i=x_0}} \sum_{\substack{(x_{i+1}, \ldots, x_k) \\ x_j \neq x_0 \forall j=i+1,\ldots,k}} \E_{x_k} \tau_{x_0} \, ,\ee
where for the second term on the right hand side we bounded the number of $(x_1,\ldots,x_k)$ in \eqref{eq:x1return} below by $d(d-1)^{k-1}$. By \eqref{eq:x1return} we have that for each $i\in[k-1]$, if $x_i=x_0$, then
$$ \sum_{\substack{(x_{i+1}, \ldots, x_k) \\ x_j \neq x_0 \forall j=i+1,\ldots,k}} \E_{x_k} \tau_{x_0} \leq |V(G)| d^{k-i} - kd(d-1)^{k-i-1}\, .$$
We put this back into \eqref{eq:xkhitx1} and bound the number of $(x_1,\ldots,x_{i})$ for which $x_i=x_0$ by $d^{i-1}$, sum over $i$ and obtain
$$ \E \E _{X_k} \tau_{X_0} \leq |V(G)|\big ( 1+ {k-1 \over d}\big ) - k \big ( 1-{1 \over d} \big )^{k-1} - {k \over d-1} \sum_{i=1}^{k-1} \big ( 1 - {1 \over d} \big )^{i} \, .$$
Since $G$ is $d$-regular and $X_0$ is a uniformly drawn vertex we have that $(X_0,\ldots,X_k)$ has the distribution as $(X_k,\ldots, X_0)$, hence
$$ \E \big [ \E _{X_0} \tau_{X_k} + \E _{X_k} \tau_{X_0} \big ] \leq 2|V(G)|\big ( 1+ {k-1 \over d}\big ) - 2k \big ( 1-{1 \over d} \big )^{k-1} - {2k \over d-1} \sum_{i=1}^{k-1} \big ( 1 - {1 \over d} \big )^{i}\, .$$
By the commute-time identity \eqref{eq:commute} we get that
$$ \E \Reff (X_0 \lr X_k) \leq {2 \over d} + {2(k-1) \over d^2} - {2 k \over |V(G)|d} \big ( 1-{1 \over d} \big )^{k-1} - {2k \over |V(G)|d(d-1)} \sum_{i=1}^{k-1} \big ( 1 - {1 \over d} \big )^{i}  \, ,$$
concluding our proof.
\end{proof}

As in \cref{lem:highrescount} we obtain a concentration estimate using \eqref{eq:reslowerbound}. 

\begin{cor}\label{cor:highrescountRW} Let $(X_0,\ldots,X_k)$ be a $k$-step random walk on a simple $d$-regular graph $G$ starting from a uniformly drawn vertex $X_0$. Then for any $\e > {2 \over d}$ we have that
$$ \Prob \big ( \Reff(X_0 \lr X_k) \geq \e \big ) \leq {2k \over \e d^2 - 2d } \, .$$
\end{cor}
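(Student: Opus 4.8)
The plan is to combine \cref{lem:fosterRW} with the deterministic lower bound \eqref{eq:reslowerbound} via Markov's inequality, exactly in the spirit of the proof of \cref{lem:highrescount}. Write $R = \Reff(X_0 \lr X_k)$. By \eqref{eq:reslowerbound}, since $G$ is simple and $d$-regular, we have the deterministic bound $R \geq \tfrac{2}{d+1} \geq \tfrac{2}{d} - \tfrac{2}{d^2}$ on the event $X_0 \neq X_k$ (and $R = 0$ otherwise, so the bound $R \geq \tfrac{2}{d} - \tfrac{2}{d^2}$ fails only on an event we can afford to discard — more carefully, one works with the nonnegative random variable $R - (\tfrac{2}{d} - \tfrac{2}{d^2})\mathbf{1}_{\{X_0 \neq X_k\}}$, or simply notes that conditioning on $\{X_0 \neq X_k\}$ only increases the probability we wish to bound, or cleanest of all, applies Markov directly to $R$ shifted by a constant after checking the shifted variable is a.s.\ nonnegative up to the $X_0 = X_k$ exception which contributes $0 < \e$ anyway).

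The key computation: let $c = \tfrac{2}{d} - \tfrac{2}{d^2}$. Since $R \geq c$ whenever $R \neq 0$, and when $R = 0$ the event $\{R \geq \e\}$ does not occur (as $\e > \tfrac{2}{d} > 0$), we have $\mathbf{1}_{\{R \geq \e\}} \leq \mathbf{1}_{\{R - c \geq \e - c\}}$ with $R - c$ a.s.\ nonnegative off the null-effect event. Hence by Markov's inequality and \cref{lem:fosterRW},
$$ \Prob(R \geq \e) \leq \frac{\Exp[R] - c}{\e - c} \leq \frac{\left(\tfrac{2}{d} + \tfrac{2(k-1)}{d^2}\right) - \left(\tfrac{2}{d} - \tfrac{2}{d^2}\right)}{\e - \tfrac{2}{d} + \tfrac{2}{d^2}} = \frac{2k/d^2}{\e - \tfrac{2}{d} + \tfrac{2}{d^2}}. $$
Finally, since $\e > \tfrac{2}{d}$ we crudely drop the favorable $+\tfrac{2}{d^2}$ in the denominator, giving $\Prob(R \geq \e) \leq \frac{2k/d^2}{\e - 2/d} = \frac{2k}{\e d^2 - 2d}$, which is the claimed bound.

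I do not anticipate a genuine obstacle here — the corollary is a routine consequence of the preceding lemma and the Nash–Williams lower bound, structurally identical to \cref{lem:highrescount}. The only point requiring a word of care is the handling of the event $X_0 = X_k$, where the deterministic lower bound on the resistance is vacuous (the resistance is $0$); but this case is harmless because it cannot contribute to $\{R \geq \e\}$, so one may either condition it away or absorb it into the Markov step as indicated. One should also note in passing that the bound is only meaningful when $\e d^2 - 2d > 0$, i.e.\ $\e > 2/d$, which is precisely the hypothesis.
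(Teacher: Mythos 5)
Your proof is correct and essentially identical to the paper's: both combine the deterministic lower bound \eqref{eq:reslowerbound} with \cref{lem:fosterRW} via the same first-moment rearrangement, arriving at $p \leq \frac{2k/d^2}{\e - 2/d + 2/d^2}$ and then dropping the favorable $+2/d^2$ in the denominator. The only difference is cosmetic --- you phrase the step as Markov's inequality applied to a shifted variable, while the paper writes the two-point lower bound $\Exp \Reff(X_0 \lr X_k) \geq \e p + (1-p)\bigl(\tfrac{2}{d} - \tfrac{2}{d^2}\bigr)$ directly --- and your explicit aside about the event $X_0 = X_k$ flags a detail the paper passes over silently.
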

\begin{proof} Denote the probability above by $p$. By \eqref{eq:reslowerbound} the resistance between distinct vertices is always at least ${2 \over d} - {2 \over d^2}$. By \cref{lem:fosterRW} 
$$ {2 \over d} + {2k -2 \over d^2} \geq \Exp \Reff(X_0 \lr X_k) \geq \e p + (1-p)\Big ( {2 \over d} - {2 \over d^2} \Big ) \, ,$$
and rearranging gives the result. \end{proof}

We present now our final variant of Foster's Theorem. Let $T$ be a fixed rooted tree with $k\geq 3$ vertices and $G$ a $d$-regular graph with $n$ vertices; we think of $n$ and $d$ as large and $k$ as fixed. We denote $T$'s vertex set by $[k]$ so that $1$ is the root and such that for any $2 \leq i \leq k$ the graph spanned on $[i]$ is a tree and the vertex $i$ is a leaf. For instance, we can label all vertices according to a breadth-first search (BFS) order. 


We say that a $k$-tuple of vertices $(v_1,\ldots,v_k)$ of $G$ is {\bf $T$-compatible} when they are distinct vertices and for any $i,j\in[k]$ such that the pair $\{i,j\}$ is an edge of $T$, the pair $\{v_i,v_j\}$ is an edge of $G$. We draw a random $k$-tuple in the following iterative fashion. Let $X_1$ be a uniformly drawn vertex of $G$; for $2 \leq i \leq k-1$ assume we have drawn $(X_1,\ldots, X_{i-1})$ and let $1 \leq j \leq i-1$ be the unique index such that the vertex $i$ of the subtree of $T$ induced on $[i]$ is a leaf hanging on $j$. We draw $X_i$ to be a random neighbor of $X_j$. When $T$ is a path with $k$ vertices, the tuple $(X_1,\ldots,X_k)$ is distributed just as $k-1$ steps of the simple random walk starting from a uniform vertex. Let us remark that the tuple $(X_1,\ldots,X_k)$ drawn this way is not necessarily $T$-compatible since the vertices are not necessarily distinct, but with high probability it will be $T$-compatible.

\begin{thm} \label{thm:GeneralFoster} Let $G$ be a simple $d$-regular graph and $T$ a rooted tree on $k\geq 3$ vertices such that $d \geq 16 k \log ^k d$. Let $\{X_1,\ldots,X_k\}$ be a random $k$-tuple drawn as described above. Then with probability at least $1 - {2k^3 \over \log^k d}$ we have that $(X_1,\ldots,X_k)$ are $T$-compatible and 
$$ \Big | \Reff \big (X_k \lr \{X_1, \ldots, X_{k-1}\} \big ) - {k \over (k-1) d} \Big | \leq {72 k \log^k d \over d^2} \, .$$
\end{thm}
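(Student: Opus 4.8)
The plan is to combine a moment computation for $\reff(X_k \lr \{X_1,\dots,X_{k-1}\})$ — an analogue of \cref{lem:fosterRW} adapted to the tree-shaped tuple rather than a path — with the deterministic lower bound \eqref{eq:reslowerbound} to get concentration, and then to deal separately with the (rare) event that the tuple fails to be $T$-compatible. First I would estimate $\E\,\reff(X_k \lr \{X_1,\dots,X_{k-1}\})$ from above via the commute-time identity \eqref{eq:commute}, but now applied to the network $G$ with the set $\{X_1,\dots,X_{k-1}\}$ contracted to a single vertex (equivalently, the expected commute time between $X_k$ and that set). The natural route is to run the argument of \cref{lem:fosterRW}: the expected return time of the random walk to $X_0$ (a uniform vertex) is $|V(G)|$, and by restricting the return excursion to not hit $X_0$ for the first several steps and to trace out the vertices $v_1,\dots,v_k$ in the BFS order prescribed by $T$, one bounds $\E[\E_{X_k}\tau_{\{X_1,\dots,X_{k-1}\}} + \text{reverse}]$ in terms of $|V(G)|$ plus lower-order corrections of size $O(k/d)\cdot|V(G)|$. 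Dividing by $2|E(G)| = |V(G)|d$ should give $\E\,\reff \le \tfrac{k}{(k-1)d} + O(k^2/d^2)$ or so; the precise combinatorial bookkeeping (how many steps, which vertices may coincide) is where one must be careful, but it is routine in the style of the earlier lemma.

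Next, for the lower bound on $\reff(X_k \lr \{X_1,\dots,X_{k-1}\})$: on the event that the tuple is $T$-compatible, $X_k$ is a leaf attached to exactly one $X_j$, so the $d$ edges emanating from $X_k$ separate $X_k$ from the rest, giving $\reff \ge 1/d$ deterministically — wait, one needs $\tfrac{k}{(k-1)d}$, not $\tfrac1d$, as the target center, so the lower bound alone is not tight. Instead I would get the concentration from the upper moment bound together with a matching \emph{lower} moment bound, again via commute times, using that $X_k$ is a.a.s. a leaf at distance $\ge 1$ and that the walk structure forces the resistance up to roughly $\tfrac{k}{(k-1)d}$ in expectation; alternatively, and more cleanly, apply a one-sided concentration (Markov-type, as in \cref{cor:highrescountRW}) to $\reff - \text{(deterministic lower bound)}$ to control the upper tail, and a separate argument for the lower tail. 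The cleanest implementation is probably: show $\E\,\reff \le \tfrac{k}{(k-1)d} + O(k\log^k d / d^2)$ and $\reff \ge \tfrac{k}{(k-1)d} - O(k\log^k d/d^2)$ with high probability by a symmetric argument, so that Chebyshev/Markov applied to the gap to the deterministic bound \eqref{eq:reslowerbound} squeezes $\reff$ into the stated window with probability $1 - O(k^3/\log^k d)$.

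For the $T$-compatibility event: the tuple fails to be $T$-compatible only if two of the $X_i$ coincide (the edge constraints are automatic by construction). Each step draws a uniform neighbor of an already-chosen vertex, and a collision means this neighbor lands in a set of at most $k$ previously chosen vertices; since each vertex has degree $d$, a union bound over the $O(k^2)$ pairs gives failure probability $O(k^2/d)$ — but this is much smaller than $k^3/\log^k d$ only when $d$ is large, and indeed the hypothesis $d \ge 16k\log^k d$ makes $k^2/d \le k/(16\log^k d)$, comfortably within budget. One subtlety: the walk could also revisit vertices "by accident" in a way that still produces $T$-compatibility but biases the resistance estimate; this is handled because the moment computation above includes those configurations and the error terms absorb them.

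\medskip

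\noindent\textbf{Main obstacle.} The delicate point is the \emph{lower} bound on $\E\,\reff(X_k \lr \{X_1,\dots,X_{k-1}\})$ (or directly on $\reff$ with high probability): the Nash-Williams bound only gives $\reff \ge 1/d + o(1/d)$, which is off from the target $\tfrac{k}{(k-1)d}$ by a constant factor when $k$ is small (e.g. $k=3$ gives target $3/(2d)$ versus Nash-Williams' $1/d$). So one genuinely needs the commute-time/return-time identity to produce a two-sided estimate, and the combinatorial accounting of near-collisions of the walk — which contribute the $\log^k d$ factors — is the technically heaviest part. I expect the $\log^k d$ in both the probability bound and the error term to come from choosing a truncation level (number of permitted walk steps before a forbidden revisit) of order $\log^k d$ and union-bounding the bad events at that scale, mirroring how $\e$ was chosen in \cref{cor:highrescountRW} but iterated $k$ times along the tree structure.
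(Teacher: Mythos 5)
Your route is genuinely different from the paper's, and as written it has a gap at exactly the point you flag as the ``main obstacle'': the lower tail of $\Reff(X_k \lr \{X_1,\ldots,X_{k-1}\})$ is never actually controlled. A commute-time computation can only give an \emph{upper} bound on the expectation, and the Markov-type argument of \cref{cor:highrescountRW} converts that into an upper-tail bound only because the deterministic lower bound \eqref{eq:reslowerbound} sits within $O(1/d^2)$ of the expectation. For the one-to-many resistance you compute the Nash-Williams bound with a single cutset (the edges at $X_k$) and get $\approx 1/d$, correctly observe this is too far from $k/((k-1)d)$, and then propose ``a symmetric argument'' for the lower tail without any mechanism. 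There is no symmetric argument available by these means: commute times bound expectations from above, not resistances from below. (Incidentally, your own scheme could be rescued: using \emph{two} nested cutsets --- the $\approx d$ edges at $X_k$ and the $\approx(k-1)d$ edges at $\{X_1,\ldots,X_{k-1}\}$, with the subdivision trick of \eqref{eq:reslowerbound} for the shared edge --- Nash-Williams gives the deterministic lower bound $\frac{1}{d}+\frac{1}{(k-1)d}-O(k/d^2)=\frac{k}{(k-1)d}-O(k/d^2)$, which is exactly the right center, and then a one-sided Markov argument would suffice. But you did not find this, and the upper-bound half would still require redoing the return-time decomposition of \cref{lem:fosterRW} in the contracted multigraph, where the ``reverse'' hitting time $\E_{\{X_1,\ldots,X_{k-1}\}}\tau_{X_k}$ must be averaged over the random endpoint; none of that bookkeeping is carried out.)

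The paper takes a different and cleaner path that avoids the vertex-to-set commute-time computation entirely. It only ever estimates \emph{pairwise} resistances: for each $i\neq j$, $X_j$ is the endpoint of a length-$\le k$ walk from the uniform vertex $X_i$, so \cref{cor:highrescountRW} with $\e=(2+\log^k d/d)/d$ plus a union bound over the $O(k^2)$ pairs gives $|\Reff(X_i\lr X_j)-2/d|\le \log^k d/d^2$ for all pairs simultaneously, with failure probability $k^3/\log^k d$ (this, not a walk-truncation scheme, is where the $\log^k d$ comes from). The whole weight of the theorem is then carried by \cref{thm:reff1tomany}: a deterministic lemma stating that if all $\binom{k}{2}$ pairwise resistances lie within $\e$ of $x$, then $\Reff(1\lr\{2,\ldots,k\})$ lies within $72k\e$ of $kx/(2(k-1))$. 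That lemma is proved by reducing the network to the $k$ marked vertices, expressing the Green's function killed at a vertex in terms of pairwise resistances, and inverting via a matrix perturbation bound. This is the key idea your proposal is missing; without it (or the two-cutset fix above), the stated two-sided estimate does not follow.
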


We first prove the following lemma.

\begin{lemma} \label{thm:reff1tomany} Let $G$ be a network and $[k]$ are distinct vertices of it with $k \geq 3$. Let $x>0$ be given and $\e>0$ satisfy $\e \leq {x \over 32k}$. Assume that for any distinct $i \neq j$ in $[k]$ one has
\be\label{eq:reff1assumption} | \Reff ( i \lr j ) -x | \leq \e \, .\ee
Then 
\be\label{eq:reff1conclusion} \Big | \Reff\big (1 \lr \{2,\ldots, k\}\big ) - {kx\over 2(k-1)} \Big | \leq 72 k \e  \, . \ee
\end{lemma}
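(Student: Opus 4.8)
The statement says that if all pairwise effective resistances among $k$ vertices are $x \pm \varepsilon$, then the resistance from vertex $1$ to the set $\{2,\ldots,k\}$ is approximately $\frac{kx}{2(k-1)}$. The heuristic is clean: in the ``uniform'' case where every $\Reff(i\lr j)=x$ exactly, one expects the network on $[k]$ to look effectively like a complete graph $K_k$ with each edge having conductance $c$ chosen so that the two-vertex effective resistance is $x$; a short computation with $K_k$ gives $\Reff(i\lr j)=\frac{2}{ck}$, so $c = \frac{2}{xk}$, and then $\Reff(1 \lr \{2,\ldots,k\})$ in $K_k$ is the resistance of $k-1$ parallel edges of conductance $c$, namely $\frac{1}{(k-1)c} = \frac{xk}{2(k-1)}$. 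So the plan is to make this ``looks like $K_k$'' intuition quantitative and robust to the $\pm\varepsilon$ perturbation.

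First I would reduce to the network spanned by $[k]$ alone: by the standard fact that effective resistances only depend on the response of the network to currents injected at the relevant vertices, we may replace $G$ by its effective (Schur-complement) network on $[k]$, which is a complete graph on $[k]$ with \emph{some} positive conductances $c_{ij}$ (this is where one must be slightly careful: the Schur complement of a Laplacian on a vertex subset is again a Laplacian with nonnegative off-diagonal weights, so no negative conductances appear). Now I have a finite, explicit linear-algebra problem: $\binom{k}{2}$ unknown conductances constrained by $\binom{k}{2}$ near-equations $\Reff(i\lr j) = x\pm\varepsilon$, and I want to control a linear functional (the voltage at $1$ when unit current flows from $1$ to the shorted set $\{2,\ldots,k\}$) of the resulting network. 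The first substantive step is to show that the constraints force all the $c_{ij}$ to be close to $\frac{2}{xk}$, quantitatively: write $c_{ij} = \frac{2}{xk}(1+\delta_{ij})$ and show $|\delta_{ij}| = O(k\varepsilon/x)$. This should follow by linearizing the resistance formula around the symmetric point $K_k$; the derivative of $\Reff(i\lr j)$ with respect to the conductances is a nondegenerate linear map (at the symmetric point it is diagonalizable with explicitly computable eigenvalues bounded away from $0$), so invertibility plus the smallness hypothesis $\varepsilon \le x/(32k)$ gives the bound on the $\delta_{ij}$. Then the second step is to plug this perturbation into the formula for $\Reff(1 \lr \{2,\ldots,k\})$ and again linearize: the answer is $\frac{xk}{2(k-1)}$ plus a term linear in the $\delta_{ij}$ with bounded coefficients, yielding an error $O(k^2\varepsilon/x \cdot x) = O(k^2\varepsilon)$; the constant $72$ and the exact power of $k$ in the target come out of tracking these constants carefully.

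An alternative, possibly cleaner route that avoids diagonalizing the perturbation uses only monotonicity (Rayleigh) and series/parallel comparisons, exploiting \eqref{eq:reslowerbound}-style reasoning rather than exact linear algebra. For the \emph{upper} bound on $\Reff(1\lr\{2,\ldots,k\})$: each edge $\{1,j\}$ of the Schur-complement network already provides a path, so $\Reff(1\lr\{2,\ldots,k\}) \le \big(\sum_{j=2}^k c_{1j}\big)^{-1}$ by parallel law; bounding each $c_{1j}$ from below via the constraint $\Reff(1\lr j)\le x+\varepsilon$ together with the presence of all the other vertices. For the \emph{lower} bound one can use a Nash-Williams cut (the $k-1$ edges at $1$ in the Schur network separate $1$ from the set) combined with upper bounds on $c_{1j}$ coming from $\Reff(1\lr j) \ge x-\varepsilon$; extracting an upper bound on a single conductance from a two-point resistance lower bound is the delicate part since resistance can be small due to \emph{other} edges. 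This is precisely where I expect the main obstacle: controlling a \emph{single} effective conductance in the Schur-complement graph from the \emph{global} pairwise-resistance data, because the effective conductance $c_{ij}$ can differ from $1/\Reff(i\lr j)$ by contributions routed through the other $k-2$ vertices; the factor $32k$ in the hypothesis and the factor $72k$ in the conclusion are exactly the ``room'' needed to absorb these indirect contributions when $\varepsilon$ is small relative to $x/k$. My recommendation is to carry out the linearization approach for sharp constants, using the symmetric $K_k$ network as the base point and citing standard Schur-complement / effective-resistance facts from \cite[Chapter 2]{LyPe:ProbabilityTrees}, and to treat the invertibility of the linearized resistance map as the technical heart of the argument.
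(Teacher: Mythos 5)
Your reduction to the Schur-complement network on $[k]$ is exactly the paper's first step (phrased there probabilistically via the chain traced on $[k]$, with conductances $c(i,j)=\pi(i)p(i,j)$), and your heuristic that the reduced network should look like $K_k$ with conductance $2/(kx)$ per edge is the right target. But the core step of your main route --- deducing $|c_{ij}-2/(kx)|=O(k\e/x^2)$ by ``linearizing the resistance formula around the symmetric point and using nondegeneracy of the derivative'' --- is a plan rather than an argument, and as stated it has a genuine gap. Linearization gives only first-order information: to convert the hypothesis $|\Reff(i\lr j)-x|\le\e$ into a quantitative bound on the $c_{ij}$ you would need either a quantitative inverse function theorem (with control of second derivatives of the conductance-to-resistance map) or an a priori argument that the conductances already lie in a neighborhood of the symmetric point where the linearization is valid; you supply neither, and the second issue is circular without further input. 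You correctly flag this very difficulty yourself in your alternative Rayleigh/Nash--Williams route (``extracting an upper bound on a single conductance from a two-point resistance lower bound is the delicate part''), but the linearization route does not escape it.

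The missing idea in the paper is that no linearization is needed, because there is an \emph{exact} linear formula for the inverse of the relevant matrix in terms of the data. Namely, after deleting loops, the killed Green's function $g_a(i,j)=\Prob_i(\tau_j<\tau_a)\Reff(a\lr j)$ satisfies the identity $g_a(i,j)=\tfrac12\big(\Reff(a\lr i)+\Reff(a\lr j)-\Reff(i\lr j)\big)$, so the hypothesis immediately gives $|g_a(i,i)-x|\le\e$ and $|g_a(i,j)-x/2|\le 3\e/2$; and the reduced Laplacian with row and column $a$ deleted is exactly $\Delta[a]=g_a^{-1}$. Writing $g_a=A+E$ with $A$ the explicit matrix ($x$ on the diagonal, $x/2$ off), one applies the standard perturbation bound $\|(A+E)^{-1}-A^{-1}\|_1\le \|A^{-1}\|_1^2\|E\|_1/(1-\|A^{-1}\|_1\|E\|_1)$ (the hypothesis $\e\le x/(32k)$ is what makes $\|A^{-1}\|_1\|E\|_1\le 1/4$) to get $|c(i,j)-2/(kx)|\le 32k\e/x^2$ and $|\tilde\pi(i)-2(k-1)/(kx)|\le 32k\e/x^2$. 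The conclusion then follows from the exact identity $\Reff(1\lr\{2,\ldots,k\})=1/\tilde\pi(1)$ (in the loopless reduced network the walk from $1$ hits $\{2,\ldots,k\}$ in one step with probability one), which is the rigorous form of your parallel-law observation. Without the Green's-function identity, or some substitute supplying the quantitative inversion, your writeup does not close.
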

\begin{proof} Let $p(i,j)$ be the probability that the network random walk on $G$ started at $i$ is at $j$ when it first returns to $[k]$; possibly $i=j$. Then $p$ is the transition matrix for the network random walk on $[k]$ with conductances $c(i,j) = \pi(i)p(i,j)$ ($=c(j,i)$ by reversibility). This implies that $\pi(i)$ remains unchanged in this reduced network and by \eqref{def:reff} we conclude that the pairwise effective resistances as well as the effective resistance between $1$ and $\{2,\ldots,k\}$ also remain unchanged. Hence \eqref{eq:reff1assumption} holds and it suffices to prove \eqref{eq:reff1conclusion} for this reduced chain on $[k]$.  Since loops do not change the resistance we remove them from this network, and we denote $\tilde{\pi}(i) = \sum_{j \neq i} c(i,j)$ the modified stationary measure after removing the loops.

Let $\Delta$ be the {\bf network Laplacian}, that is, $\Delta$ is a $k\times k$ symmetric matrix defined by $\Delta(i,i)=\tilde{\pi}(i)$ for any $i \in [k]$ and $\Delta(i,j) = -c(i,j)$ for any distinct $i\neq j$ in $[k]$. Given $a\in[k]$ we write $\Delta[a]$ for the $(k-1)\times (k-1)$ matrix obtained from $\Delta$ by erasing the row and column indexed by $a$. 

For three vertices $a,i,j \in [k]$ with $i,j\in[k] \setminus \{a\}$, we write $g_a(i,j)=G_{a}(i,j)/\tilde{\pi}(j)$ for the normalized Green's function of the random walk killed at $a$, that is, $G_{a}(i,j)$ is the expected number of visits to $j$ of a random walk started at $i$ before it visits $a$. We view $g_a(\cdot,\cdot)$ as a $(k-1)\times(k-1)$ matrix. The quantity $g_a(i,j)$ is also the voltage at $i$ when unit current flows from $a$ to $j$ (so that the voltage at $a$ is zero). Hence, $g_a(i,j) = \Prob_i(\tau_j < \tau_a) \cdot \Reff(a \lr j)$. It is well known that this quantity can be expressed in terms of the pairwise effective resistances, indeed, by Exercise 2.68 of \cite{LyPe:ProbabilityTrees} we get that 
$$ g_a(i,j) = {\Reff(a \lr j) + \Reff(a\lr i) - \Reff(i \lr j) \over 2} \, ,$$
from which we deduce by our assumption \eqref{eq:reff1assumption} that $|g_a(i,j)-x/2| \leq 3\e/2$ for all $i \neq j$ in $[k]\setminus \{a\}$ and $|g_a(i,i) -x|\leq \e$ for all $i\in [k]\setminus \{a\}$. 

It is classical (see Exercise 2.62(a) in \cite{LyPe:ProbabilityTrees}) that $g_a(\cdot,\cdot)$ is an invertible matrix and that $\Delta[a] = \big [g_a(\cdot,\cdot) \big ]^{-1}$. Denote by $A$ the $(k-1)\times(k-1)$ matrix with $x$ on the diagonal and $x/2$ in all other entries, so that $g_a = A+E$ and $||E||_1 \leq {3\e \over 2}(k-4/3)$ where $||E||_1$ the maximum $\ell_1$ norm of the rows of $E$ viewed as vectors in $\mathbb{R}^{k-1}$.


It is straightforward to verify that $A^{-1}$ is a matrix with $\alpha$ on the diagonal and $\beta$ in all other entries, where
$$ \alpha = {2(k-1) \over kx} \qquad \qquad \beta = -{2 \over kx} \, ,$$
so that $||A^{-1}||_1 = {4k-6 \over kx}$. It is a well known fact \cite{inversestable} that if $||A^{-1}||_1\cdot||E||_1<1$, then 
$$ ||(A+E)^{-1} - A^{-1} ||_1 \leq {||A^{-1}||_1^2 ||E||_1\over 1-||A^{-1}||_1||E||_1} \, .$$
Since $\e \leq {x \over 32 k}$ we learn that $||A^{-1}||_1\cdot||E||_1\leq 1/4$
from which we conclude that $||\Delta[a] - A^{-1}||_1 \leq {32 k \e / x^{2}}$. Since $a$ was arbitrary for any $i,j\in [k]$ with $i\neq j$ we get
$$ \Big |c(i,j) - {2 \over kx} \Big | \leq {32 k \e \over x^{2}} \qquad \qquad \Big | \tilde{\pi}(i) - {2(k-1) \over kx} \Big | \leq {32 k \e \over x^{2}} \leq {1 \over x}\, ,$$
since $\e \leq {x \over 32 k}$. Since $k \geq 3$ we have that $\tilde{\pi}(i) \geq {1 \over 3x}$ and ${k \over k-1} \leq 3/2$. Thus, 
$$\Big | {1 \over \tilde{\pi}(i)} - {kx\over 2(k-1)} \Big | \leq {32 k^2 \e/x \over 2(k-1)\tilde{\pi}(i)} \leq  {72 k \e} \, ,$$
where we used that if $|a-b|\leq c$ then $|{1 \over a}-{1 \over b}| \leq c/|ab|$.

Since there are no loops in our reduced network, the network random walk started at $1$ visit $\{2,\ldots,k\}$ before returning to $1$ with probability $1$. Hence $\Reff(1 \lr \{2,\ldots,k\})$ is just $1 \over \tilde{\pi}(1)$ by \eqref{def:reff}, concluding the proof.
\end{proof}

\noindent{\bf Proof of \cref{thm:GeneralFoster}.} Put $\e = {2 + {\log^k d \over d} \over d}$. For any $i,j\in[k]$ with $i\neq j$ the random vertex $X_i$ is uniformly distributed over the vertices of $G$ and $X_j$ is the endpoint of a random walk starting at $X_i$ of length $\ell \leq k$, where $\ell$ is the length of the path between $i$ and $j$ in $T$. By \cref{cor:highrescountRW} 
$$ \Prob \Big ( \Reff(X_i \lr X_j) \geq {2 + {\log^k d\over d} \over d} \Big ) \leq {2k \over \log^k d } \, .$$
By the union bound
$$\Prob \Big ( \exists i\neq j \in [k] \quad \Reff(X_i \lr X_j) \geq {2 + {\log^k d\over d} \over d} \Big ) \leq {k^3 \over \log^k d } \, .$$
Furthermore, by \eqref{eq:reslowerbound}, if $X_i \neq X_j$, then the resistance across them is at least ${2 \over d} - {2 \over d^2}$. At each step of the random process of drawing $(X_1,\ldots,X_k)$, the probability we draw an already chosen vertex is at most $k/d$ hence the probability that $(X_1,\ldots,X_k)$ is compatible with $T$ (in particular they are distinct vertices) is at least $1-{k^2 \over d} \geq 1-{k^3 \over \log^k d }$ by our assumption on $d$. 

We conclude that with probability at least $1-2k^3/\log^k d$ for all $i,j\in[k]$ with $i\neq j$ we have
$$ \Big | \Reff(X_i \lr X_j) - {2 \over d} \Big | \leq {\log ^k d \over d^2} \, ,$$
from which \cref{thm:reff1tomany} implies that desired result (we have used our assumption $d \geq 16 k \log ^k d$ to verify the assumption of \cref{thm:reff1tomany}).\qed

\begin{cor} \label{cor:generalhighrescount} Let $G$ be a simple $d$-regular graph and $T$ a rooted tree on $k\geq 3$ vertices such that $d \geq 16 k \log ^k d$. Denote by $N$ the number of $k$-tuples $(v_1,\ldots,v_k)$ that are $T$-compatible and such that 
$$ \Big | \Reff ( v_k \lr \{v_1,\ldots,v_{k-1}\}) - {k \over (k-1) d} \Big | \geq {72 k \log^k d \over d^2} \, .$$
Then
$$ N \leq {2 nd^{k-1}k^3 \over \log^k d}\, .$$ 
\end{cor}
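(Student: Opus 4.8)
The plan is to deduce this immediately from \cref{thm:GeneralFoster} by a counting (first-moment) argument. First I would recall how the random $k$-tuple $(X_1,\ldots,X_k)$ is generated: $X_1$ is uniform among the $n$ vertices of $G$, and for each $i\ge 2$ the vertex $X_i$ is a uniform neighbour of its parent $X_j$ in $T$. Since $G$ is $d$-regular, each such neighbour choice has probability exactly $1/d$. Hence for any fixed $k$-tuple $(v_1,\ldots,v_k)$ that is $T$-compatible — in particular every tree edge $\{v_i,v_j\}$ is present in $G$, so the tuple is actually producible by this procedure — we have
$$ \Prob\big((X_1,\ldots,X_k)=(v_1,\ldots,v_k)\big) = \frac{1}{n}\cdot\frac{1}{d^{k-1}}, $$
a value that does not depend on which $T$-compatible tuple we picked.

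Next I would let $\mathcal{B}$ be the set of $T$-compatible tuples violating the resistance estimate, so $|\mathcal{B}|=N$. Every element of $\mathcal{B}$ lies in the complement of the good event of \cref{thm:GeneralFoster} — that event asks simultaneously for $T$-compatibility \emph{and} for $\big|\Reff(X_k\lr\{X_1,\ldots,X_{k-1}\})-\tfrac{k}{(k-1)d}\big|\le \tfrac{72k\log^k d}{d^2}$, and has probability at least $1-\tfrac{2k^3}{\log^k d}$. Summing the per-tuple probability $\tfrac{1}{nd^{k-1}}$ over the $N$ tuples in $\mathcal{B}$ therefore yields
$$ \frac{N}{nd^{k-1}} = \Prob\big((X_1,\ldots,X_k)\in\mathcal{B}\big) \le \frac{2k^3}{\log^k d}, $$
and multiplying through by $nd^{k-1}$ gives $N\le \dfrac{2nd^{k-1}k^3}{\log^k d}$, as desired.

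There is essentially no obstacle here beyond bookkeeping; all the analytic content lives in \cref{thm:GeneralFoster} (and upstream, in \cref{cor:highrescountRW} and \cref{thm:reff1tomany}). The one point deserving a moment's care is the identity $\Prob((X_1,\ldots,X_k)=(v_1,\ldots,v_k))=(nd^{k-1})^{-1}$ for $T$-compatible $v$: it uses $d$-regularity so that each step of the sampling contributes exactly $1/d$, and it uses that $T$-compatibility (presence of all tree edges) guarantees the tuple is reachable by the procedure, so its probability is not zero.
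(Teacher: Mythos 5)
Your argument is correct and is exactly the reasoning the paper leaves implicit when it says the corollary is ``immediate'' from \cref{thm:GeneralFoster}: each $T$-compatible tuple is sampled with probability exactly $(nd^{k-1})^{-1}$, so the $N$ bad tuples contribute total probability $N/(nd^{k-1})$ to the failure event, which is at most $2k^3/\log^k d$. Nothing further is needed.
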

\begin{proof}
Immediate from \cref{thm:GeneralFoster}.
\end{proof}






\section{Tightness}\label{sec:tightness}


Our goal in this section is to prove \cref{thm:tightness} showing that, in the setting of \cref{thm:mainthm}, for any integer $r\geq 1$ the random variables $|B_{\T_n}(X,r)|$ are a tight sequence. We remark that for $r=1$ this is trivial since the expected degree of a randomly chosen vertex in {\em any} tree at most $2$. This reasoning fails for $r \geq 2$, indeed, the behavior of bigger balls is inherently different, for example, $\Exp |B_{\Gamma_n}(X,3)|$ may be unbounded, see \cref{subsec:questionsballs} for an example and further discussion.

\begin{lemma} \label{lem:stayawayfrombad} Let $\T$ be a fixed finite tree, $S\subset V(\T)$ be a subset of its vertices and $X$ a uniformly drawn vertex of $V(\T)$. Then for any positive integers $r, m, M$ we have
$$ \Prob \Big ( B_\T(X,r-1) \cap S \neq \emptyset \, ,\,  |B_\T(X,r-1)| \leq m \, , \,  |B_\T(X,r)| \leq M  \Big ) \leq \frac{r|S|m^{r-2} M} {|V(\T)|} \, .$$
\end{lemma}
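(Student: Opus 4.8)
The plan is to bound the probability by a union over the possible "entry structure" by which a random vertex $X$ can see a vertex of $S$ within distance $r-1$. Since $\T$ is a tree, for each $s\in S$ there is a unique simple path from $X$ to $s$, of some length $j$ with $0\le j\le r-1$. So I would first write
\[
\Prob\big(B_\T(X,r-1)\cap S\neq\emptyset,\ |B_\T(X,r-1)|\le m,\ |B_\T(X,r)|\le M\big)
\ \le\ \sum_{s\in S}\sum_{j=0}^{r-1}\Prob(A_{s,j}),
\]
where $A_{s,j}$ is the event that $\dist_\T(X,s)=j$ together with the two size constraints. The goal is to show $\sum_s\sum_j \Prob(A_{s,j})\le r|S|m^{r-2}M/|V(\T)|$, so it suffices to prove $\Prob(A_{s,j})\le m^{r-2}M/|V(\T)|$ for each fixed $s$ and $j$ (and then the double sum has at most $r|S|$ terms).

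The key step is to count the vertices $x$ for which the event $A_{s,j}$ can hold. Fix $s\in S$ and $j$. If $x$ satisfies $\dist_\T(x,s)=j$ and $|B_\T(x,r-1)|\le m$, $|B_\T(x,r)|\le M$, then $x$ lies in $B_\T(s,j)$; moreover, walking from $x$ toward $s$, the first step goes to the unique neighbor $y$ of $x$ on the $x$–$s$ path, and $y\in B_\T(x,1)\subseteq B_\T(x,r-1)$. I would build $x$ from $s$ as follows: there are at most $|B_\T(s,1)|$ choices for the first vertex $z_1$ of the path from $s$; but the cleaner bookkeeping is to reverse it — given that $x$ must have $|B_\T(x,r-1)|\le m$, the ball $B_\T(x,r-1)$ contains the whole path from $x$ to a vertex at distance $r-2$ along any direction, and in particular contains $x$'s portion of the path to $s$ of length $\min(j,r-1)$. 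The cleanest route: condition on the penultimate vertex $w$ on the $x$-to-$s$ path, i.e.\ the neighbor of $s$ that the path from $x$ enters $s$ through (if $j\ge 1$); there are at most $\deg_\T(s)$ such $w$, but we don't want a degree factor. Instead I would argue by choosing $x$ together with the first $r-2$ vertices of a geodesic ray out of $x$: given $x$, the ball $B_\T(x,r-1)$ has size $\le m$, so the number of vertices at distance exactly $r-1$ from $x$ is $\le m$, and the number at distance exactly $r$ is $\le M$.

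Here is the counting I would actually run. Fix $s$ and $j\le r-1$. Every $x$ with $A_{s,j}$ possible lies at distance $j$ from $s$. Consider the map sending such an $x$ to the pair $\big(x',\,$tail$\big)$ where $x'$ is the vertex at distance $\min(j,r-1)$ from $x$ along the $x$–$s$ geodesic — but since $j\le r-1$ this is just $s$ itself, which carries no information. So instead: I will bound the number of valid $x$ directly by $m^{\,r-2}M$ using the size constraints of a \emph{neighbour} of $s$. Since $\T$ is a tree and $j\le r-1$, the vertex $x$ lies in the subtree hanging off $s$ through some neighbour $w$ of $s$ (or $x=s$, the case $j=0$, contributing one vertex). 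Fix that neighbour $w$; then $x$ is at distance $j-1\le r-2$ from $w$ inside $\T$. Now use the constraints anchored at $w$: I claim that among all vertices, the number of $x$ at distance exactly $j-1$ from a \emph{fixed} vertex $w$ is at most... this is where I need the $m$ and $M$ to enter, so the right anchoring is at $x$, not at $s$.

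Therefore the genuinely correct approach is: enumerate $x$ by first choosing the geodesic $x=u_0,u_1,\dots,u_j=s$. Reading it from $s$: $u_{j-1}$ is a neighbour of $s$ (at most $\deg_\T(s)$ choices — undesirable). Reading it from $x$: once $x$ is chosen, the path is forced (tree), and the constraint $|B_\T(x,r-1)|\le m$ controls how many $x$ can sit at distance $j$ from a \emph{given} $s$ only if we already fixed $x$. The resolution the authors surely intend: sum over $x$, and for each $x$ the event forces $s\in B_\T(x,r-1)$, i.e.\ $s$ ranges over a set of size $\le m$; but we want the bound per-$s$. Flipping: $\Prob(\exists s: A_{s,\cdot}) = |V(\T)|^{-1}\#\{x: B_\T(x,r-1)\cap S\neq\emptyset,\ |B_\T(x,r-1)|\le m,\ |B_\T(x,r)|\le M\}$. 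I will bound this count. Pick such an $x$; pick $s\in S\cap B_\T(x,r-1)$ and let $j=\dist(x,s)\le r-1$, and let $u_1$ be the first step from $x$ toward $s$. The pair $(s, u_1)$ determines $x$ poorly, but: the map $x\mapsto (s,\,x')$ where $x'$ is the vertex at distance $r-2$ from $x$ on the geodesic continued \emph{past} the $x$-side — hmm.

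Let me commit to the clean version: \textbf{the count is at most $|S|\cdot m^{r-2}\cdot M$.} Proof of the count: choose $s\in S$ ($|S|$ ways). Given $s$, I claim there are at most $m^{r-2}M$ vertices $x$ with $\dist(x,s)\le r-1$, $|B_\T(x,r-1)|\le m$, $|B_\T(x,r)|\le M$. To see this, reveal the geodesic from $s$ to $x$ step by step: $s=p_0,p_1,\dots,p_j=x$ with $j\le r-1$. For the ball constraints to be satisfiable at $x$, we need in particular that at each intermediate vertex $p_i$ with $i\ge 1$, since $p_0=s\in B_\T(p_i, r-1)$ (as $i\le r-1$) and $B_\T(p_i,r-1)\subseteq B_\T(p_{i-1},r)$... this still doesn't obviously bound the branching at $p_i$ by $m$. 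The honest statement must be: \emph{walking the geodesic from $x$ toward $s$}, at the first vertex $x=p_j$ we have $|B_\T(x,1)|\le |B_\T(x,r-1)|\le m$ so $\le m$ choices for $p_{j-1}$ given $p_j$; at $p_{j-1}$, $|B_\T(p_{j-1},r-1)|\le |B_\T(p_j,r)|\le M$, hmm mixed.

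\smallskip
\textbf{Final committed plan.} Fix $s,j$. Enumerate $x$ at distance $j\le r-1$ from $s$ by walking the geodesic $s=q_0, q_1,\ldots,q_j=x$. At step $i$ (choosing $q_i$ given $q_{i-1}$), the number of choices is at most $\deg_\T(q_{i-1})-1$ for $i\ge 2$. This is not directly $\le m$, so the $m^{r-2}$ factor cannot come from the $s$-anchored walk — it must come from an $x$-anchored one. So: switch the order of summation to $\sum_x$, bound $\#\{\text{valid }x\}$ by: (choose $x$'s distance-$(r-2)$ forward-structure) — concretely, the valid $x$ inject into the set of pairs $(y,z)$ where $y\in B_\T(x,r-1)$, $z\in B_\T(x,r)\setminus B_\T(x,r-1)$... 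This is circular. I will instead present the proof the authors give and frame mine to match: \emph{First}, $\Prob = |V(\T)|^{-1}\,\#\{x:\ldots\}$. \emph{Second}, for a valid $x$ choose the nearest $s\in S$; the geodesic from $x$ to $s$ has length $j\le r-1$ and its reverse, \emph{extended to length $r-1$} inside $B_\T(x,r-1)$ if $j<r-1$, shows: the number of valid $x$ with a given nearest $s$ at a given distance $j$ is at most the number of paths of length $r-2$ emanating appropriately, which by the constraint $|B_\T(x,r-1)|\le m$ at each encountered vertex is at most $m^{r-2}$, times $M$ from the distance-$r$ sphere, times $r$ from the choice of $j$. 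Summing over $s\in S$ gives $r|S|m^{r-2}M$. \emph{The main obstacle} is making the "at each encountered vertex the ball of radius $r-1$ has size $\le m$" propagation along the geodesic rigorous — one needs that a vertex $p_i$ at distance $i\le r-1$ from $x$ satisfies $|B_\T(p_i,1)|\le$ (something $\le m$), which follows because $B_\T(p_i,1)\subseteq B_\T(x,r)$ only gives $\le M$, not $\le m$; the fix is to do the $(r-2)$-step branching walk entirely \emph{inside} $B_\T(x,r-1)$ where every encountered vertex $p$ has $B_\T(p,1)\cap B_\T(x,r-1)$ of total vertex-count bounded since the whole ball has $\le m$ vertices, so the product of branchings over $r-2$ steps telescopes to at most $m^{r-2}$ (indeed $\prod(\text{children counts})\le$ number of leaves-at-depth-$(r-2)$ paths $\le m$, even better, but $m^{r-2}$ is a safe bound), and the last sphere at distance $r$ contributes the factor $M$ via $|B_\T(x,r)|\le M$.

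I expect the delicate point — and I would flag it as the crux — to be this bookkeeping: choosing the unordered set $S$-vertex, the distance $j$, and then the $r$–$2$ internal geodesic vertices and the distance-$r$ sphere vertex, while arguing each is constrained by $m$ (resp. $M$), and checking that this injection is valid so that the count is genuinely at most $r|S|m^{r-2}M$. Everything else (Markov-style union bound, converting probability to a normalized count since $X$ is uniform on $V(\T)$) is routine.
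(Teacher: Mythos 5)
Your overall strategy---rewrite the probability as $|V(\T)|^{-1}$ times the number of admissible vertices $x$, and enumerate those $x$ via the geodesics joining them to $S$, splitting according to the endpoint $s\in S$ and the length $\ell\le r-1$---is exactly the paper's. However, the one step that actually produces the factor $m^{r-2}M$ is left open. You observe that when walking the geodesic outward from $s$ the number of continuations at each step is the degree of the current vertex, declare that this ``is not directly $\le m$,'' conclude that the $s$-anchored walk cannot work, and then propose as a ``fix'' a branching walk run ``entirely inside $B_\T(x,r-1)$''---which, as you yourself note earlier, is circular, since $x$ is precisely the unknown being counted. You end by flagging this bookkeeping as an unclosed obstacle, so the proposal does not constitute a proof.

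The missing observation is elementary and rescues the $s$-anchored walk. Write the geodesic as $s=q_0,q_1,\dots,q_\ell=x$ with $\ell\le r-1$. For $i\ge 1$ the vertex $q_i$ is at distance $\ell-i\le r-2$ from $x$, so $B_\T(q_i,1)\subseteq B_\T(x,r-1)$ and hence $\deg(q_i)<m$ whenever $x$ is admissible; and $s=q_0$ is at distance $\ell\le r-1$ from $x$, so $B_\T(s,1)\subseteq B_\T(x,r)$ and $\deg(s)<M$ (indeed $\deg(s)<m$ when $\ell<r-1$). Thus the injective map $x\mapsto(\text{geodesic from }s\text{ to }x)$ lands in the set of paths from $s$ whose first step has at most $M$ continuations and whose remaining $\ell-1\le r-2$ steps each have at most $m$ continuations, giving at most $Mm^{r-2}$ admissible $x$ for each $s$ and each $\ell$, hence at most $r|S|m^{r-2}M$ in total. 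Your error at the crux was to bound $B_\T(p_i,1)$ only by its containment in $B_\T(x,r)$, obtaining $M$ for every vertex of the path, rather than noting that all vertices except possibly $s$ lie within distance $r-2$ of $x$ and therefore inherit the bound $m$ from $|B_\T(x,r-1)|\le m$.
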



\begin{proof} If $x\in \T$ is a vertex such that $B_\T(x,r-1) \cap S \neq \emptyset$, then there exists a vertex $v \in S$ and a simple path in $\T$ of length $\ell \leq r-1$ from $x$ to $v$. If in addition $|B_\T(x,r-1)| \leq m$ and $|B_\T(x,r)| \leq M$, then when $\ell<r-1$ all vertex degrees on this path are at most $m$ and if $\ell=r-1$, then the degrees of the first $r-2$ vertices of the path are at most $m$ and the degree of $v$ is at most $M$. For each $v \in S$, the number of such paths of length $\ell<r-1$ is at most $m^{\ell}$ and the number of such paths when $\ell = r-1$ is at most $M m^{r-2}$. Hence the number of possible $x$'s is at most 
$$ |S| \sum_{\ell=1} ^{r-2} m^{\ell} + |S| m^{r-2} M \leq r|S|m^{r-2} M \, ,$$
concluding the proof.
\end{proof}

\begin{thm} [Tightness] \label{thm:tightness} Let $\{G_n\}$ be a sequence of finite, simple, connected, regular graphs with degree $d(n)\to \infty$. Let $\T_n$ be a uniformly drawn spanning tree of $G_n$ and let $X$ be a uniformly chosen random vertex of $G_n$. Then for any integer $r\geq 0$ we have
$$ \lim _{M \to \infty} \sup_n \Prob \Big ( |B_{\T_n}(X,r)| \geq M \Big ) = 0 \, .$$
\end{thm}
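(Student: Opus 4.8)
The proof goes by induction on $r$. The base cases $r=0,1$ are trivial (balls have size at most $1$ and $\le 1+d$ respectively, but more to the point, in a tree the average degree is $<2$ so $|B_{\T_n}(X,1)|$ is tight by Markov's inequality applied to the degree of $X$). For the inductive step, assume tightness of $|B_{\T_n}(X,r-1)|$ and prove it for $|B_{\T_n}(X,r)|$. The idea is that $B_{\T_n}(X,r)$ is obtained from $B_{\T_n}(X,r-1)$ by attaching, to each vertex $w$ at distance exactly $r-1$ from $X$, its subtree of children; so it suffices to control the total number of edges of $\T_n$ emanating from the sphere $S_{r-1}:=\{w:\dist_{\T_n}(X,w)=r-1\}$ into $S_r$. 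Since $|S_{r-1}|\le|B_{\T_n}(X,r-1)|$ is already tight, the task reduces to showing that with high probability every vertex $w$ that can appear in such a ball of bounded size has bounded $\T_n$-degree — i.e., that ``high-degree vertices of $\T_n$'' are, with high probability, far (in $\T_n$) from $X$, or few in number.

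**Key steps.** First, fix a truncation level $m$ from the inductive hypothesis so that $\Prob(|B_{\T_n}(X,r-1)|>m)<\e$ uniformly in $n$. Next, let $\mathrm{Bad}_K=\mathrm{Bad}_K(\T_n)$ be the (random) set of vertices of $\T_n$ with $\T_n$-degree at least $K$. The crucial estimate is an upper bound on $\Exp|\mathrm{Bad}_K(\T_n)|$: a vertex $v$ of $G_n$ has $\T_n$-degree $\ge K$ only if at least $K$ of its $d$ incident edges lie in $\T_n$; by Kirchhoff's formula \eqref{eq:kirchhoff} and Foster's Theorem \eqref{eq:fostereff}, the expected number of tree-edges at $v$ is $\sum_{u\sim v}\Reff(u\lr v)$, which on average over $v$ is $\approx 2$, and by \eqref{eq:reslowerbound} each term is $\le \frac{2}{d}+\frac{2}{d^2}$ times... — more carefully, $\Exp[\deg_{\T_n}(v)] = \sum_{u\sim v}\Reff(u\lr v)\le d\cdot(\text{something})$, and summing over $v$, $\sum_v \Exp[\deg_{\T_n}(v)] = 2|E(\T_n)| = 2(n-1)$. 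Hence by Markov, $\Exp|\mathrm{Bad}_K|\le 2(n-1)/K$, so a uniformly random vertex lies in $\mathrm{Bad}_K$ with probability $\le 2/K$. Now apply \cref{lem:stayawayfrombad} with $S=\mathrm{Bad}_K$: on the event that $X$ is within distance $r-1$ of some high-degree vertex and $|B_{\T_n}(X,r-1)|\le m$, we also need an a-priori bound $M$ on $|B_{\T_n}(X,r)|$ — but that is circular. To break the circularity, note instead that $|B_{\T_n}(X,r)| \le m\cdot(\max\text{-degree among vertices of }B_{\T_n}(X,r-1))$ is the wrong direction; rather, condition on $|B_{\T_n}(X,r-1)|\le m$ and on $B_{\T_n}(X,r-1)\cap\mathrm{Bad}_K=\emptyset$, in which case $|B_{\T_n}(X,r)|\le m K$ deterministically. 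So the bad events to exclude are $\{|B_{\T_n}(X,r-1)|>m\}$ (probability $<\e$, uniformly) and $\{B_{\T_n}(X,r-1)\cap\mathrm{Bad}_K\ne\emptyset\}$.

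**Controlling the second bad event.** For the event $\{B_{\T_n}(X,r-1)\cap\mathrm{Bad}_K\ne\emptyset\}$, apply \cref{lem:stayawayfrombad} with $S=\mathrm{Bad}_K$, the same $m$, and $M$ to be chosen. But one must first intersect with $\{|B_{\T_n}(X,r)|\le M\}$, which again requires a bound. The clean route: run a preliminary induction to get, for each $\e$, values $m=m(r-1,\e)$ and $M_0=M_0(r,\e)$ with $\Prob(|B_{\T_n}(X,r-1)|>m)<\e$ and $\Prob(|B_{\T_n}(X,r)|>M_0)$... no — the second is what we want. Instead use the self-improving estimate: for \emph{any} $M$, \cref{lem:stayawayfrombad} gives
$$\Prob\bigl(B_{\T_n}(X,r-1)\cap\mathrm{Bad}_K\ne\emptyset,\ |B_{\T_n}(X,r-1)|\le m,\ |B_{\T_n}(X,r)|\le M\bigr)\le\frac{r\,|\mathrm{Bad}_K|\,m^{r-2}M}{n},$$
and take expectation over $\T_n$ to replace $|\mathrm{Bad}_K|$ by $\Exp|\mathrm{Bad}_K|\le 2(n-1)/K$, giving a bound $\le \frac{2rm^{r-2}M}{K}$. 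On the complementary event (still inside $\{|B_{\T_n}(X,r-1)|\le m\}$), $B_{\T_n}(X,r-1)$ avoids $\mathrm{Bad}_K$, so every sphere-$(r-1)$ vertex has $\T_n$-degree $<K$, forcing $|B_{\T_n}(X,r)|\le mK$. So: choosing $K$ large (depending on $\e$, $m$, $r$, and a provisional $M$) and then $M:=mK$, we get $\Prob(|B_{\T_n}(X,r)|>mK)\le \e + \frac{2rm^{r-2}(mK)}{K}=\e+2rm^{r-1}$, which is not small — the $M$-dependence of the Lemma bites.

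**The main obstacle and fix.** The genuine difficulty is precisely this circular dependence on an a-priori bound $M$ for $|B_{\T_n}(X,r)|$ inside \cref{lem:stayawayfrombad}. The resolution is to iterate the decomposition: instead of one set $\mathrm{Bad}_K$, peel off degrees at \emph{every} sphere. Formally, bound $|B_{\T_n}(X,r)|\le \prod_{i=0}^{r-1}\Delta_i$ where $\Delta_i=\max\{\deg_{\T_n}(w):\dist_{\T_n}(X,w)=i\}$; then it suffices to show each $\Delta_i$ is tight on the event that the ball up to level $i$ is small. Equivalently — and this is the cleaner bookkeeping — prove by downward reasoning: the number of vertices $x$ with $|B_{\T_n}(x,r-1)|\le m$ but $|B_{\T_n}(x,r)|> M$ is small, because such $x$ must be within distance $r-1$ of a vertex of $\T_n$-degree $> M/m$, i.e. of $\mathrm{Bad}_{\lceil M/m\rceil}$; the number of vertices within $\T_n$-distance $r-1$ of a given vertex $v$ is at most the number of vertices within $\T_n$-distance $r-1$ of $v$, which — hmm, that is again a ball size. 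The way the authors will actually do it, I expect, is: enumerate over the high-degree vertex $v\in\mathrm{Bad}_K$ and the \emph{path} from $x$ to $v$ of length $\le r-1$ (as in \cref{lem:stayawayfrombad}'s proof), bounding the path-count by $m^{r-1}$ using that intermediate vertices lie in $B_{\T_n}(x,r-1)$ hence have degree $\le m$ — no final factor $M$ because $v$ itself need not be a path-endpoint with the ``$r$-th step'' issue; one uses $|B_{\T_n}(x,r-1)|\le m$ only, not $|B_{\T_n}(x,r)|$. So the correct application is \cref{lem:stayawayfrombad} with the $M$-ball hypothesis dropped and $S=\mathrm{Bad}_K$: the number of $x$ with $B_{\T_n}(x,r-1)\cap\mathrm{Bad}_K\ne\emptyset$ and $|B_{\T_n}(x,r-1)|\le m$ is $\le r|\mathrm{Bad}_K|m^{r-1}$. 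Hmm, but the Lemma as stated \emph{does} include the $M$-term; I suspect the intended use in the tightness proof is slightly different, perhaps bounding $|B_{\T_n}(X,r)|$ directly by noting it equals $|B_{\T_n}(X,r-1)|$ plus the number of $\T_n$-edges from $S_{r-1}$ outward, and each such vertex of $S_{r-1}$ is counted with multiplicity its degree — so one really wants $S=\mathrm{Bad}_K$, and the event ``$|B_{\T_n}(X,r)|$ large, $|B_{\T_n}(X,r-1)|\le m$'' \emph{implies} ``$B_{\T_n}(X,r-1)$ meets $\mathrm{Bad}_{K}$ with $K=M/m$'', and then \cref{lem:stayawayfrombad} applied with that $S$, the same $m$, and $M$ (now a genuinely free parameter since we're bounding a \emph{different} event) closes the loop after taking $\Exp_{\T_n}$ and choosing $K=M/m\to\infty$. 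I expect the careful handling of exactly which $M$ appears where, and the order of quantifiers ($\sup_n$ outside $\lim_M$), to be where the real work is; everything else is the Foster/Kirchhoff computation $\Exp|\mathrm{Bad}_K|\le 2(n-1)/K$, which is immediate from the tools already in the paper.
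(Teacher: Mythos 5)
Your diagnosis of the obstacle is exactly right, but your proposal does not overcome it, and the one computation you actually complete demonstrates this: with $\mathrm{Bad}_K$ defined by tree-degree and only the first-moment bound $\Exp|\mathrm{Bad}_K|\le 2(n-1)/K$ available, the factor $M$ from \cref{lem:stayawayfrombad} cancels against the $1/K$ only up to a constant, leaving $\e+2rm^{r-1}$, which is not small. This is not a bookkeeping or quantifier-ordering issue to be patched by ``peeling off degrees at every sphere'': the Markov/Foster information about tree degrees sits exactly at the critical scale and carries no surplus, so any argument using only $\Exp|\mathrm{Bad}_K|$ will terminate at a constant. Two ideas are missing. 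First, the paper defines the bad set $S_k$ not by tree-degree but by the number of incident edges of $G_n$ whose effective resistance exceeds $A_k/d$, where $A_k$ is a \emph{free} parameter: by \cref{lem:highrescount} there are at most $2n/A_k$ such edges, so the set of vertices touching at least $2^{k-1}M/m$ of them has size at most $8nm/(A_k 2^k M)$. Feeding this into \cref{lem:stayawayfrombad} on the dyadic slice $\{B_r\in[2^kM,2^{k+1}M)\}$ (so that the lemma's ``$M$'' is $2^{k+1}M$ and cancels against the $1/(2^kM)$ in $|S_k|$) yields $16rm^{r-1}/A_k$, which is made small by choosing $A_k$ large; this is the surplus your tree-degree set cannot provide. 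Second, on the complementary event the sphere-$(r-1)$ vertex responsible for the blow-up is \emph{not} in $S_k$, hence at least $D/2$ of its $D=2^kM/m$ outward tree-edges have resistance at most $A_k/d$; enumerating the path and the children and applying negative correlation \eqref{eq:negativecorr} together with Kirchhoff's formula gives a bound of order $A_k^{r-1}\binom{d}{D/2}(A_k/d)^{D/2}\le A_k^{r-1}(2eA_k/D)^{D/2}$, superexponentially small in $D$ once $M$ is large relative to $A_k$. It is this large-deviation estimate --- unavailable from a first moment of tree degrees --- that breaks the circularity you identified; without it your induction does not close.
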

\begin{proof} To simplify the notation we write $B_{r}$ for the random variable $|B_{\T_n}(X,r)|$. Our proof is by induction. The case $r=0$ is obvious since $B_0=1$ always. Let $r \geq 1$ and let $\e>0$ be arbitrary. By induction we know that there exists some $m>0$ such that for all $n$ we have
\be \label{eq:tightstep1} \Prob \Big ( B_{r-1} \geq m \Big ) \leq \e/2 \, .\ee
We fix $M \geq m$ that we will choose later depending only on $m,r$ and $\e$. Our goal is to estimate $ \Prob ( B_r \geq M \and B_{r-1} \leq m )$. This equals
\be\label{eq:tightsplitk} \sum_{k=0}^\infty \Prob \Big ( B_r \in [2^k M, 2^{k+1}M) \and B_{r-1} \leq m \Big ) \, .\ee
For each $k\geq 0$ we choose some large $A_k \geq 4$ that will also be chosen later (it will depend only $k, r, m$ and $\e$) and apply \cref{lem:highrescount} to obtain that the number of edges with resistance on them at least $A_k/d$ is at most $n/(A_k-2) \leq 2n/A_k$. Let $S_k$ denote the set of vertices which touch at least $2^{k-1} M/m$ edges with resistance at least $A_k/d$. Thus, $|S_k| \leq \frac{8n m}{A_k 2^{k} M}$. By \cref{lem:stayawayfrombad} we have 
\be \label{eq:tightstep2} \Prob ( B_r \in [2^k M, 2^{k+1}M) \and B_{r-1} \leq m \and B_{\T_n}(X,r-1) \cap S_k \neq \emptyset \Big ) \leq \frac{16 r m^{r-1} }{A_k} \, .\ee
We now upper bound
\be\label{eq:tightstep4} \Prob \Big ( B_r \in [2^k M, 2^{k+1}M) \and B_{r-1} \leq m \and B_{\T_n}(X,r-1) \cap S_k = \emptyset \Big ) \, .\ee
If the event above occurs, then there is a path $v_0,v_1,\ldots,v_{r-1}$ in $\T_n$ starting at $v_0=X$ of length $r-1$ such that 
\begin{enumerate}
	\item All vertices in the path are not in $S_k$, and,
	\item $v_{r-1}$ has at least $D=2^kM/m$ edges of $\T_n$ touching it whose other endpoint is not in $B_{\T_n}(X,r-1)$.  
%
\end{enumerate}
Since $v_{r-1} \not \in S_k$ at least $D/2$ of the edges specified above have resistance at most $A_k/d$ between their endpoints. We enumerate over all the possibilities of the path $(v_0,\ldots,v_{r-1})$ (there are at most $nd^{r-1}$ choices) and $D/2$ children of $v_{r-1}$ (there are ${d \choose D/2}$ choices). We apply \eqref{eq:negativecorr} and bound the probability of each edge being in $\T_n$ by $A_k/d$. We obtain that \eqref{eq:tightstep4} is bounded by

$$ d^{r-1} (A_k/d)^{r-1} {d \choose D/2} (A_k/d)^{D/2} \leq A_k^{r-1} \Big ( {2eA_k \over D} \Big )^{D/2} \, .$$




We put this bound together with \eqref{eq:tightstep1}, \eqref{eq:tightsplitk} and \eqref{eq:tightstep2} to obtain that
\be\label{eq:lastineq} \Prob ( B_r \geq M ) \leq {\e \over 2} + \sum_{k=0}^\infty {16 r m^{r-1} \over A_k} + \sum_{k=0}^\infty A_k^{r-1} \Big ( {2eA_k \over D} \Big )^{D/2}  \, ,\ee
where $D=2^k M/m$. We now choose $A_k = 2^{k+7} r m^{r-1} \e^{-1} $ so that the first sum on the right hand side is at most $\e/4$. We then take $M=M(m,r,\e)$ so large such that for any $k\geq 0$ we have
$$ A_{k}^{r-1} \Big ( {2eA_k \over D} \Big )^{D/2} \leq \e(1/4)^k /8 \, ,$$
for example, $M$ can be chosen so that $2eA_k/D \leq \e^{2r} 2^{-r} r^{-r}m^{-r^2}$ and $D/2 \geq 2^{k+1}$ so that the third sum in \eqref{eq:lastineq} is at most $\e/4$. We get that $\Prob ( B_r \geq M ) \leq \e$, concluding the proof. \end{proof}

\section{Proof of main theorem}


We say that a vertex of $G$ is {\bf good} if it does not touch an edge such that the effective resistance between its endpoints is at least ${\log d \over d}$. Given a fixed rooted tree $T$ of height $r$ on $k\geq 2$ vertices, we say that $T$-compatible $k$-tuple $(v_1,\ldots,v_k)$ is ${\bf good}$ if all the vertices of tree-distance at most $r-1$ to the root are good. In other words, if the vertices $(v_1,\ldots, v_{k-|T_r|})$ are good vertices. The following is a key calculation.

\begin{lemma}\label{lem:openedges} Let $G$ be a simple $d$-regular graph and $T$ a rooted tree on $k\geq 2$ vertices. If $d \geq 72 k \log ^{k+1} d$, then
\be\label{eq:openedges} {1 \over n} \sum_{\substack{(v_1,\ldots, v_k) \\ T\mathrm{-compatible} \\ \mathrm{good}}} \prod_{i=2}^k \Reff(v_i \lr \{v_1,\ldots, v_{i-1}\}) \leq k + {4^k 2 k^3  \over \log d} \, ,\ee
\end{lemma}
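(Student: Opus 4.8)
\textbf{Proof plan for \cref{lem:openedges}.} The plan is to expand the sum over $T$-compatible good $k$-tuples by peeling off the last vertex $v_k$, which is a leaf of $T$ hanging on some vertex, say the parent $v_j$ with $j = j(k)$. Writing the product as $\prod_{i=2}^{k-1} \Reff(v_i \lr \{v_1,\ldots,v_{i-1}\})$ times $\Reff(v_k \lr \{v_1,\ldots,v_{k-1}\})$, I would first handle the innermost factor: for fixed $(v_1,\ldots,v_{k-1})$, the vertex $v_k$ ranges over the $d$ neighbors of $v_j$, and $\Reff(v_k \lr \{v_1,\ldots,v_{k-1}\}) \le \Reff(v_k \lr v_j)$ by Rayleigh monotonicity (restricting the target set only increases resistance). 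Thus $\sum_{v_k \sim v_j} \Reff(v_k \lr \{v_1,\ldots,v_{k-1}\}) \le \sum_{v_k \sim v_j} \Reff(v_k \lr v_j)$. If $v_j$ is a good vertex — which it is when the tuple is good and $j \le k - |T_r|$, i.e. $v_j$ is not at the last level — then all but the ``bad'' edges at $v_j$ contribute at most $\tfrac{\log d}{d}$ each, so the sum over neighbors is at most $d \cdot \tfrac{\log d}{d} + (\text{number of bad edges at } v_j)\cdot 1 \le \log d + 1$, using that a good vertex touches no edge of resistance $\ge \tfrac{\log d}{d}$, so in fact the sum is $\le \log d$. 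Wait — more carefully, a good vertex touches \emph{no} such edge, so every term is $< \tfrac{\log d}{d}$ and the sum over the $d$ neighbors is $< \log d$. The parent $v_j$ of the leaf $v_k$ is at tree-distance $r-1$ from the root (if $v_k \in T_r$) or less, so $v_j$ is always good in a good tuple; hence this factor is bounded by $\log d$ uniformly.

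That reduces the $k$-fold sum to a $(k-1)$-fold sum with an extra factor $\log d$, but the remaining tuples $(v_1,\ldots,v_{k-1})$ are now $T'$-compatible where $T'$ is $T$ with the leaf $k$ removed, and they are still good (removing a leaf from the last level doesn't disturb goodness; if $v_k$ was at level $<r$ then... actually $v_k$ is a leaf so it is at depth $r$ unless $T$ has smaller height, but in general the induction should be set up on the number of vertices $k$, peeling leaves in reverse BFS order). Iterating this peeling all the way down would give a bound like $(\log d)^{k-1}$, which is far too weak — it must be $k + O(4^k k^3 / \log d)$. So crude peeling is \emph{not} the right approach for the bulk term; the $\log d$ factors must be absorbed. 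The correct strategy is instead to split each factor $\Reff(v_i \lr \{v_1,\ldots,v_{i-1}\})$ according to whether the relevant edge $\{v_i, v_{j(i)}\}$ has resistance $\ge \tfrac{\log d}{d}$ (a ``bad edge'' at the parent) or $< \tfrac{\log d}{d}$. Since the parents are good, there are no bad edges, so actually $\Reff(v_i \lr \{v_1,\ldots,v_{i-1}\}) \le \Reff(v_i \lr v_{j(i)}) < \tfrac{\log d}{d}$ for \emph{every} $i$ with $v_{j(i)}$ good. But the number of good $T$-compatible tuples is at most $n d^{k-1}$, so the product over $i=2,\ldots,k$ of something $< \tfrac{\log d}{d}$ times $n d^{k-1}$ gives $n (\log d)^{k-1}$, still too weak after dividing by $n$. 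This tells me the real content is a \emph{second-moment / Foster-type} cancellation: one should not bound $\Reff(v_i \lr \{v_1,\ldots,v_{i-1}\})$ pointwise but rather use that its \emph{average} over the random draw is $\approx \tfrac{2}{d}$ (or $\tfrac{k}{(k-1)d}$ for the joint target, by \cref{thm:GeneralFoster}).

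So the actual plan is: interpret $\tfrac{1}{n}\sum_{\text{good }T\text{-compat}} \prod_{i=2}^k \Reff(v_i \lr \{v_1,\ldots,v_{i-1}\})$ as $d^{k-1}$ times $\E\big[\prod_{i=2}^k \Reff(X_i \lr \{X_1,\ldots,X_{i-1}\}) \cdot \mathbf{1}_{\text{good, compat}}\big]$ where $(X_1,\ldots,X_k)$ is the random tuple from the iterative construction. Then condition on the first $k-1$ coordinates: $\E[\Reff(X_k \lr \{X_1,\ldots,X_{k-1}\}) \mid X_1,\ldots,X_{k-1}] = \tfrac{1}{d}\sum_{v_k \sim X_{j(k)}} \Reff(v_k \lr \{X_1,\ldots,X_{k-1}\})$. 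Here I would split this inner sum into the term $v_k \in \{X_1,\ldots,X_{k-1}\}$ (contributing $0$, since resistance to oneself is $0$) and apply a \emph{Foster-type bound at the single vertex} $X_{j(k)}$: the sum $\sum_{v_k \sim X_{j(k)}} \Reff(v_k \lr X_{j(k)})$ is, by Foster's theorem localized, related to the number of spanning-tree edges at $X_{j(k)}$ — in fact $\sum_{v \sim u} \Reff(u \lr v) = \sum_{v \sim u} \Prob(\{u,v\} \in \T) = \E[\deg_\T(u)]$, and on a \emph{tree} the expected degree is at most $2$... but here $\T$ ranges over a random object; rather $\sum_{v\sim u}\Prob(\{u,v\}\in\T)$ summed over all $u$ is $2(n-1)$, so on average this is $\approx 2$. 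I would bound $\sum_{v_k \sim u}\Reff(u \lr v_k) \le \sum_{v_k \sim u}\Reff(u \lr v_k)$ trivially and then use Foster's theorem \eqref{thm:foster} after summing over $u = X_{j(k)}$ as it ranges over all vertices to get the crucial factor $\approx 2$ rather than $\log d$. The main obstacle, and the heart of the argument, is organizing this ``sum the localized Foster bound over the parent vertex'' correctly through all $k-1$ peeling steps so that the cumulative bound is $\prod(2 + \text{error}) = 2^{k-1} + \ldots$ — but the target is $k$, not $2^{k-1}$, so the product must telescope differently: presumably one uses that $\sum_{v \sim u}\Reff(u\lr v) = 2 - \tfrac{2}{n} + (\text{contribution from high-resistance edges})$ at \emph{good} $u$ must actually be close to $2$, and then the sum over the tree structure of $T$ gives exactly the ``$k$'' (number of edges of $T$ is $k-1$, plus the root factor), with the error term $4^k 2k^3/\log d$ coming from \cref{cor:generalhighrescount} / \cref{lem:highrescount} counting the rare tuples where some $\Reff$ deviates, bounded using negative correlation \eqref{eq:negativecorr} as in the tightness proof. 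I expect reconciling the combinatorial bookkeeping (why $k$ and not $2^{k-1}$) to be where all the care goes: the point must be that $\Reff(v_i \lr \{v_1,\ldots,v_{i-1}\})$ with the \emph{growing} target set is typically $\approx \tfrac{i}{(i-1)d} \approx \tfrac{1}{d}$ for large $i$, not $\tfrac{2}{d}$, and the product $d^{k-1}\prod_{i=2}^k \tfrac{i}{(i-1)d} = d^{k-1}\cdot\tfrac{k}{d^{k-1}} = k$ on the nose — so the plan is to prove $\E[\Reff(X_i \lr \{X_1,\ldots,X_{i-1}\}) \mid X_1,\ldots,X_{i-1}] \le \tfrac{i}{(i-1)d} + \text{error}$ on the good event via \cref{thm:GeneralFoster} applied to the sub-tree on $\{1,\ldots,i\}$, then multiply the conditional expectations telescopically and collect errors, handling the non-good and non-compatible tuples separately by the counting lemmas and negative correlation.
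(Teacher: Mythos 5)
Your final plan is essentially the paper's proof: the paper proceeds by induction on $k$, using \cref{cor:generalhighrescount} to isolate the few $T$-compatible tuples where $\Reff(v_k \lr \{v_1,\ldots,v_{k-1}\})$ exceeds roughly $\tfrac{k}{(k-1)d}$ (bounding the entire product for those by $(\log d/d)^{k-1}$ via goodness, contributing $O(k^3/\log d)$), and for the rest bounding the last factor by the threshold, summing over the $d$ choices of $v_k$, and recursing, so that the telescoping $\prod_{i=2}^k \tfrac{i}{i-1}=k$ you identified gives the main term. The only stray remark is the appeal to negative correlation, which plays no role here; otherwise the ingredients and bookkeeping match.
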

\begin{proof} We prove by induction on $k$. 
The base case $k=2$ follows (without the second term on the right hand side of \eqref{eq:openedges}) by Foster's Theorem \eqref{thm:foster}. We proceed to the general $k\geq 3$ case.

We write $C=2k^3$. By \cref{cor:generalhighrescount} the number of $T$-compatible $k$-tuples $(v_1,\ldots,v_k)$ for which 
\be\label{eq:resineq} \Reff(v_k \lr \{v_1,\ldots,v_{k-1}\}) \geq {k + {72 k\log^{k} d \over d} \over (k-1)d}\ee
is at most $Cnd^{k-1} / \log^{k} d$. For these tuples, we bound each term in the product by ${\log d \over d}$; indeed, we may do so since $v_i$ has a good neighbor in $\{v_1,\ldots,v_{i-1}\}$ for all $i\in \{2,\ldots,k\}$. This gives us an upper bound of 
$$ {1 \over n} \times {Cnd^{k-1} \over \log^{k} d} \times \Big ( {\log d \over d} \Big )^{k-1} = {C \over \log d} \, .$$
For all other tuples we have the opposite inequality at \eqref{eq:resineq}. Thus we may bound the last term in the product by this, sum it over the $d$ possible choices of $v_k$ and obtain that the sum at the left hand side of \eqref{eq:openedges} is bounded above by

\begin{eqnarray*} {C \over \log d} + {k + {1 \over \log d} \over (k-1)} \cdot {1 \over n} \cdot \sum_{\substack{(v_1,\ldots, v_{k-1}) \\ T\setminus\{v_k\}\mathrm{-compatible} \\ \mathrm{good}}} \prod_{i=2}^{k-1} \Reff(v_i \lr \{v_1,\ldots, v_{i-1}\}) \, ,\end{eqnarray*}
where we used our assumption on $d$ to bound ${72 k \log^{k} d \over d} \leq {1 \over \log d}$. We use our induction hypothesis to bound the last term and get that
\begin{eqnarray*} {1 \over n} \sum_{\substack{(v_1,\ldots, v_k) \\ T\mathrm{-compatible} \\ \mathrm{good}}} \prod_{i=2}^k \Reff(v_i \lr \{v_1,\ldots, v_{i-1}\}) &\leq& {C \over \log d} + {k + {1 \over \log d} \over (k-1)} \Big ( k-1 + {4^{k-1}C \over \log d} \Big ) \\ &\leq& k + {2C \over \log d} + {3 \cdot 4^{k-1} C \over \log d} \, ,\end{eqnarray*}
where we bounded ${k + {1 \over \log d} \over (k-1)} \leq 3$. Since $2+3\cdot 4^{k-1}\leq 4^k$ we get the desired inequality and conclude our proof.
\end{proof}

\noindent {\bf Proof of \cref{thm:mainthm2}.}
Let $(v_1,\ldots,v_k)$ be a $T$-compatible $k$-tuple of $G_n$. We write $T(v_1,\ldots,v_k)$ for the subset of edges $\big \{ \{v_i,v_j\} : \{i,j\} \in E(T) \}$ of $G_n$ (by definition of $T$-compatible all of these pairs must be edges of $G_n$). We have that 
$$ \Prob (B_{\T_n}(X, r) \cong T) = {1 \over |V(G_n)| |\stab_T|} \sum_{\substack{(v_1,\ldots, v_k) \\ T\textrm{-compatible}}} \Prob ( B_{\T_n}(v_1, r) = T(v_1,\ldots,v_k) ) \, ,$$
since for each root preserving isomorphism $\sigma\in \stab_T$ the edge subsets $T(v_1,\ldots,v_k)$ and $T(v_{\sigma(1)},\ldots,v_{\sigma(k)})$ are equal so the corresponding events identify as well. Up to these isomorphisms, all the events are disjoint and hence the $|\stab_T|^{-1}$ term above.

Recall that $T_r$ are the vertices at the last level of $T$ viewed from the root. Set $t=k-|T_r|$ so that by our labeling convention the vertices $[t]$ are the vertices of $T$ that are not in $T_r$. Denote by $\lambda_T(v_1,\ldots, v_k)$ the event that all edges emanating from $v_1, \ldots, v_{t}$ which do not belong to $T(v_1,\ldots,v_k)$ are \emph{not} in the UST $\T_n$. 
We have
$$\Prob (B_{\T_n}(X, r) \cong T) = {1 \over |V(G_n)| |\stab_T|} \hspace{-.3cm} \sum_{\substack{(v_1,\ldots, v_k) \\ T\textrm{-compatible}}} \hspace{-.5cm} \Prob \big ( T(v_1,\ldots,v_k) \subset \T_n \, , \, \lambda_T(v_1,\ldots, v_k) \big ) \, .$$

By Kirchhoff's formula \eqref{eq:kirchhoff} together with spatial Markov's property (\cref{prop:spatialMarkov}) for any $T$-compatible tuple $(v_1,\ldots,v_k)$, we have that 
\be\label{eq:mainthmopenedges} \Prob(T(v_1,\ldots,v_k) \subset \T_n) = \prod_{i=2}^k \Reff(v_i \lr \{v_1,\ldots, v_{i-1}\}) \, .\ee

By the spatial Markov property (\cref{prop:spatialMarkov}), the probability of the event $\lambda_T(v_1,\ldots, v_k)$ conditioned on $T(v_1,\ldots,v_k) \subset \T_n$ is the probability that all edges emanating from $\{v_1,\ldots, v_t\}$ that do not belong to $T(v_1,\ldots,v_k)$ are not in a UST of the graph $G/(v_1,\ldots,v_k)$ in which the edges $T(v_1,\ldots,v_k)$ are contracted and loops erased. The degree in this graph of the the vertex corresponding to $(v_1,\ldots,v_k)$ is at least $kd-k^2$ and at most $kd$. 

Denote by $\{e_1,\ldots, e_L\}$ the set of edges of $G$ that have precisely one endpoint in $\{v_1,\ldots, v_t\}$ and do not belong to $T(v_1,\ldots,v_k)$. Since all degree are $d$ we have that $L\geq td - k^2$. For any $\ell$ satisfying $1 \leq \ell \leq L$ we have 
\begin{align*} \Prob ( e_\ell \not \in \T_n \mid \{e_1,\ldots,e_{\ell-1}\} \cap \T_n = \emptyset \, &, \, T(v_1,\ldots,v_k) \subset \T_n) \\ &\leq 1 - {1 \over d-k+1} - {1 \over kd - k^2 - \ell +1} \, ,\end{align*}
since if $e_\ell=(v_i,u)$ for some $1 \leq i \leq k$ and $u\not \in \{v_1,\ldots,v_k\}$, then the degree of $u$ in $G/(v_1,\ldots,v_k) - \{e_1,\ldots,e_{\ell-1}\}$ is at least $d-k$ (since $G_n$ is simple) and the degree of $v_i$ in the same graph is at least $kd - k^2 - \ell+1$, and the estimate follows by the spatial Markov property and our deterministic lower bound \eqref{eq:reslowerbound} on the effective resistances between two vertices.

We apply this estimate sequentially over $\{e_1,\ldots, e_L\}$, use the fact that $L\geq td - O(1)$ and obtain that
\begin{eqnarray*}\label{eq:closedupperbound} \Prob \big ( \lambda_T(v_1,\ldots, v_k) \mid T(v_1,\ldots,v_k) \subset \T_n \big ) &\leq& \prod_{\ell=1}^{L} \Big ( 1 - {1 \over d-k+1} - {1 \over dk - k^2 - \ell +1}  \Big ) \\ &\leq& \exp \Big ( -\sum_{\ell=1}^{td-k^2} \big ( {1 \over d} + {1 \over dk - \ell} \big ) \Big ) \\ &\leq& (1+O(d^{-1})) {e^{-t} (k - t) \over k} \, , \end{eqnarray*}
where the second inequality is a straightforward computation with harmonic series. The constants in the $O$-notation depend on $k$. Thus,

$$\Prob (B_{\T_n}(X, r) \cong T) \leq {(1+O(d^{-1})) e^{-t} (k - t) \over k |V(G_n)| |\stab_T|} \hspace{-.5cm} \sum_{\substack{(v_1,\ldots, v_k) \\ T\textrm{-compatible}}} \prod_{i=2}^k \Reff(v_i \lr \{v_1,\ldots, v_{i-1}\}) \, .$$

We now show that the probability that $B_{\T_n}(X, r-1)$ contains a vertex that is not good is negligible. Indeed, by \cref{lem:highrescount} the number of such vertices is at most $Cn/\log d$ and so by \cref{lem:stayawayfrombad} the probability that $B_{\T_n}(X, r) \cong T$ and that there exists a vertex of $B_{\T_n}(X, r-1)$ touching such a vertex is bounded by ${C r k^{r-1} \over \log d} = o(1)$ since $d(n) \to \infty$ and $k$ and $r$ are fixed. Hence by \cref{lem:openedges} we get
\be\label{eq:mainineq} \Prob (B_{\T_n}(X, r) \cong T) \leq { e^{-t} (k - t) \over |\stab_T|} + o(1) \, .
\ee


Denote by $T_M$ the set of rooted trees $T$ with $|V(T)|\leq M$, viewed up to root preserving graph isomorphism. Let $\e>0$ be arbitrary, apply \cref{thm:tightness} to obtain a number $M_1=M_1(\e)<\infty$ so that for all $n$ 
$$ \sum _{T \in T_{M_1}}\Prob ( B_{\T_n}(X, r) \cong T ) \in [1- \e,1] \, .$$
By \eqref{eq:poissongoal} there exists $M_2=M_2(\e)<\infty$ such that 
$$ \sum _{T \in T_{M_2}} { e^{-t} (k - t) \over |\stab_T|} \in [1-\e,1] \, .$$
We put $M=\max(M_1, M_2)$.  By \eqref{eq:mainineq} we learn that there exists $N=N(M,\e)$ such that for any $T\in T_M$ and any $n\geq N$ one has
$$ \Prob (B_{\T_n}(X, r) \cong T) \leq { e^{-t} (k - t) \over |\stab_T|} + {\e \over |T_M|} \, .$$
If two positive sequences $\{a_\ell\}_{\ell=1}^N$ and $\{b_\ell\}_{\ell=1}^N$ satisfy $a_\ell \leq b_\ell$ for all $\ell\in[N]$ as well as $\sum_{\ell=1}^N a_\ell \in [1-\e,1]$ and $\sum_{\ell=1}^N b_\ell \in [1-\e,1+\e]$, then $\sum_{\ell=1}^N |a_\ell-b_\ell| \leq 2\e$. Hence 
$$ \sum_{T\in T_M} \Big | \Prob (B_{\T_n}(X, r) \cong T) -   {e^{-t} (k - t) \over |\stab_T|} 
\Big | \leq 2\e \, ,$$
concluding our proof. \qed \\

\section{The UST on high degree almost regular graphs}

In this section we describe the necessary changes to the proof of \cref{thm:mainthm2} in order for it to work when $\{G_n\}$ is a sequence of high degree almost regular graphs with $d(n)\to\infty$ (see \cref{def:almost}), that is, in order to prove \cref{thm:mainthm2Robust}. Furthermore, we prove the quenched version \cref{thm:mainthm2quenched} which relies on \cref{thm:mainthm2Robust} (even in the purely regular setting of \cref{thm:mainthm2}).

We will need to take into account the rate of decay of the $o(1)$ terms in \cref{def:almost}. Therefore we assume for the rest of this section that $\{G_n\}$ is a sequence of high degree almost regular graphs such that
\be \label{eq:ass1} \Big | \big \{ v \in V(G_n): | \deg(v) - d(n) | \leq \delta_n d(n) \big \} \Big | \geq (1-\delta_n)|V(G_n)| \, ,\ee 
and
\be \label{eq:ass2} \Big |\sum_{v\in V(G_n)} \deg(v) - d(n)|V(G_n)| \Big | \leq \delta_n d(n)|V(G_n)| \, ,\ee
where $\delta_n = o(1)$ is some non-negative sequence tending to $0$. Without loss of generality we assume that $d(n)^{-1} = O(\delta_n)$; otherwise we take the sequence $\max(\delta_n, d(n)^{-1})$.

For such a sequence we have that
\be\label{eq:uniformstationaryRobust} \sum_{v \in V(G_n)} \Big | {\deg(v) \over \sum_v \deg(v)} - {1 \over |V(G_n)|} \Big | = O(\delta_n) \, ,\ee
or in other words, we may couple the uniformly drawn vertex with a vertex drawn according to the stationary distribution so that they are equal with probability at least $1-O(\delta_n)$, see \cite[Proposition 4.7]{LPW:Mixing}.

\subsection{Almost regular versions of \cref{sec:Foster,sec:tightness}}
Let $X_0$ be a vertex drawn according to the stationary distribution and let $X_1$ be a random neighbor of it so that $\{X_0,X_1\}$ is a uniformly drawn edge of $G_n$. Then Foster's Theorem gives the analogue of \eqref{eq:fostereff} for high degree almost regular graphs
$$ \Exp \Reff(X_0 \lr X_1) = {2 + O(\delta_n) \over d(n)} \, .$$
\begin{manuallemma}{3.1'} \label{lem:highrescountRobust} {\em Let $\{G_n\}$ be a high degree almost regular graph satisfying \eqref{eq:ass1} and \eqref{eq:ass2}.  For any $\e>2/d$ the number of edges $e=(x,y)\in E(G_n)$ with $\Reff(x \lr y) \geq \e$ is at most ${(1+O(\delta_n))|V(G_n)| \over \e d - 2}$.}
\end{manuallemma}
\begin{proof} Similarly to the proof of \cref{lem:highrescount}, we denote by $N_\e$ the number of such edges. For every edge $(x,y)$ for which the degree of both its vertices is at most $(1+\delta_n)d(n)$ we bound $\Reff(x \lr y) \geq {2 - O(\delta_n) \over d}$ using \eqref{eq:reslowerbound}. By \eqref{eq:ass1} and \eqref{eq:ass2} the number of such edges is at least $(1-O(\delta_n))|V(G_n)|d/2$, so by Foster's Theorem \eqref{thm:foster}
$$ |V(G_n)|-1 \geq N_\e \e + \Big ( {(1-O(\delta_n))|V(G_n)|d \over 2} - N_\e \Big ) { 2-O(\delta_n) \over d} \, ,$$
giving the required upper bound on $N_\e$. 
\end{proof}

We use \cref{lem:highrescountRobust} to prove tightness in the almost regular setting. 

\begin{manualtheorem}{4.2'}[Tightness] \label{thm:tightnessRobust} {\em Let $\{G_n\}$ be a high degree almost regular graph sequence. Let $\T_n$ be a uniformly drawn spanning tree of $G_n$ and let $X$ be a random vertex chosen according to the stationary distribution of $G_n$. Then for any integer $r\geq 0$ we have
$$ \lim _{M \to \infty} \sup_n \Prob \Big ( |B_{\T_n}(X,r)| \geq M \Big ) = 0 \, .$$}
\end{manualtheorem}
\begin{proof} The proof of \cref{thm:tightness} applies verbatim with \cref{lem:highrescountRobust} replacing the use of \cref{lem:highrescount}.
\end{proof}

Next we alter the statement of \cref{lem:fosterRW} to the following.

\begin{manuallemma}{3.2'}\label{lem:fosterRWRobust} {\em Let $\{G_n\}$ be a high degree almost regular graph satisfying \eqref{eq:ass1} and \eqref{eq:ass2} and let $(X_0,\ldots,X_k)$ be a $k$-step random walk on $G_n$, with $k\geq 1$, starting from a random vertex $X_0$ drawn according to the stationary distribution. Then
$$ \Exp \Reff(X_0 \lr X_k) \leq {2 + O(\delta_n) \over d}\, .$$}
where the implicit constant may depend on $k$.
\end{manuallemma}
\begin{proof}
The alterations required in the proof of \cref{lem:fosterRW} are changing the stationary mass of $x_0$ and replacing powers of $d$ into the corresponding products of degrees. So that \eqref{eq:x1return} now becomes
$$ \sum_{\substack{(x_1,\ldots,x_k) \\ x_i \neq x_0 \forall i=1,\ldots,k}} {1 \over \prod_{i=0}^{k-1} \deg(x_i) } \big [ \E_{x_k} \tau_{x_0} + k \big ] \leq {1 \over \pi(x_0)} \, ,$$
where $\pi(x_0) = \deg(x_0) / \sum_{v\in V(G_n)} \deg(v)$ is the stationary mass of $x_0$. The analogue of \eqref{eq:xkhitx1} is now
$$ \E \E _{X_k} \tau_{x_0} \leq {1 \over \pi(x_0)} + \sum_{i=1}^{k-1} \sum_{\substack{(x_1,\ldots,x_{i}) \\ x_i=x_0}} {1 \over \prod_{j=0}^{i-1} \deg(x_j) } \hspace{-.3cm} \sum_{\substack{(x_{i+1}, \ldots, x_k) \\ x_j \neq x_0 \forall j=i+1,\ldots,k}} {1 \over \prod_{j=i}^{k-1} \deg(x_j) } \E_{x_k} \tau_{x_0} \, .$$
Hence plugging in the previous estimate yields
$$ \E \E _{X_k} \tau_{x_0} \leq {1 \over \pi(x_0)} + {1 \over \pi(x_0)} \sum_{i=1}^{k-1} \Prob(X_i=x_0 \mid X_0= x_0) \, .$$
We average this inequality over $x_0$ according to the stationary measure of $G_n$ and get that
$$ \E \E _{X_k} \tau_{X_0} \leq |V(G_n)|+ \sum_{i=1}^{k-1} \sum_{x_0}  \Prob(X_i=x_0 \mid X_0= x_0) \, .$$
For each $i\in [k-1]$, the corresponding term in the sum on the right hand side, divided by $|V(G)|$, is just the probability that $\Prob(X_i=X_0)$ when $X_0$ is a uniformly chosen vertex. It is easier to bound the return probability when $X_0$ is a stationary vertex rather than a uniform vertex. By \eqref{eq:uniformstationaryRobust} the difference between the two probabilities is $O(\delta_n)$. If $X_0$ is stationary, then $X_{i-1}$ is also a stationary vertex, hence the probability that it is a vertex with degree smaller than $d/2$ is $O(\delta_n)$ by \eqref{eq:ass1}, and if that does not occur the probability of moving to $X_0$ in the next step is at most $2/d=O(\delta_n)$. We deduce that 
$$ \E \E _{X_k} \tau_{X_0} \leq (1+O(\delta_n))|V(G_n)| \, ,$$
and the proof continues precisely as in \cref{lem:fosterRW} to give the desired result. 
\end{proof}


\begin{manualcor}{3.3'} \label{cor:highrescountRWRobust} {\em Let $\{G_n\}$ be a high degree almost regular graph satisfying \eqref{eq:ass1} and \eqref{eq:ass2} and let $(X_0,\ldots,X_k)$ be a $k$-step random walk on $G_n$, with $k\geq 1$, starting from a random vertex $X_0$ drawn according to the stationary distribution. Then for any $\e > 2/d$ we have
$$\Prob \big ( \Reff(X_0 \lr X_k) \geq \e \big ) \leq {O(\delta_n) \over \e d - 2 } \, .$$}
\end{manualcor}
\begin{proof}
Denote the probability that $\Reff(X_0 \lr X_k) \geq \e$ by $p$. Since $X_0$ is stationary so is $X_k$ hence by \eqref{eq:ass1} the probability that they both are vertices of degree $(1\pm O(\delta_n))d(n)$ is at least $1-O(\delta_n)$. In this case we bound the resistance between them from below by ${2-O(\delta_n) \over d(n)}$ using \eqref{eq:reslowerbound}. Thus, by \cref{lem:fosterRWRobust} we obtain 
$$ {2 + O(\delta_n) \over d} \geq \Exp \Reff(X_0 \lr X_k) \geq \e p + (1-p-O(\delta_n)){2 - O(\delta_n) \over d} \, ,$$
and rearranging gives the result.
\end{proof}

We are now ready to state the analogue of \cref{thm:GeneralFoster}. Given a fixed finite rooted tree $T$ with $k\geq 3$ vertices, the random $k$-tuple $(X_1,\ldots,X_k)$ of vertices is drawn exactly as described above \cref{thm:GeneralFoster} with the only exception that $X_0$ is now a stationary vertex. 

\begin{manualtheorem}{3.4'} \label{thm:GeneralFosterRobust} {\em Let $\{G_n\}$ be a high degree almost regular graph satisfying \eqref{eq:ass1} and \eqref{eq:ass2}. Let $\{X_1,\ldots,X_k\}$ be a random $k$-tuple drawn as described above. Then there exists $C=C(k)>0$ such that with probability at least $1 - O(\delta_n^{1/2})$ we have that $(X_1,\ldots,X_k)$ are $T$-compatible and 
$$ \Big | \Reff \big (X_k \lr \{X_1, \ldots, X_{k-1}\} \big ) - {k \over (k-1) d} \Big | \leq {C\delta_n^{1/2} \over d} \, .$$}
\end{manualtheorem}
\begin{proof} We closely follow the the proof of \cref{thm:GeneralFoster}. For $i\neq j$ as in the beginning of that proof, we have by \cref{cor:highrescountRWRobust} that

$$ \Prob \Big ( \Reff(X_i \lr X_j) \geq {2 + \sqrt{\delta_n} \over d} \Big ) = O(\delta_n^{1/2}) \, .$$
By the union bound
$$\Prob \Big ( \exists i\neq j \in [k] \quad \Reff(X_i \lr X_j) \geq {2 + \sqrt{\delta_n} \over d} \Big ) = O(\delta_n^{1/2}) \, .$$
Furthermore, since $X_i$ are stationary, by \eqref{eq:ass1} and \eqref{eq:uniformstationaryRobust}, with probability at least $1-O(\delta_n)$ the degrees of $X_1,\ldots,X_k$ are all $(1+O(\delta_n))d(n)$, hence the resistance between any distinct pair is at least ${2-O(\delta_n) \over d}$. Furthermore, as before, the probability that $X_1,\ldots,X_k$ are all distinct is $1-O(d^{-1})$ which is $1-O(\delta_n)$. We conclude that with probability at least $1-O(\delta_n^{1/2})$ for all $i,j\in[k]$ with $i\neq j$ we have
$$ \Big | \Reff(X_i \lr X_j) - {2 \over d} \Big | \leq {O(\delta_n^{1/2}) \over d} \, ,$$
from which \cref{thm:reff1tomany} implies that desired result.
\end{proof}

\subsection{Proof of main theorem, extended version}
In the proofs of the rest of this section we will need to discard undesirable $T$-compatible $k$-tuples such as those that have atypical vertex degrees. To that aim we introduce the following definitions. Recall that our convention is that the vertices $[t]$ where $t<k$ are the vertices of $T$ at graph distance at most $r-1$ from the root, while $\{t+1,\ldots, k\}$ are the vertices at graph distance precisely $r$ from the root.

\begin{defin} \label{def:robustdefs}  Let $\{G_n\}$ be a high degree almost regular graph sequence satisfying \eqref{eq:ass1} and \eqref{eq:ass2}. 
\begin{enumerate}
\item We say that a $T$-compatible $k$-tuple $(v_1,\ldots,v_k)$ has {\bf typical degrees} if 
$$ (1-\delta_n) d(n) \leq \deg(v_i)\leq (1+\delta_n) d(n) \qquad \forall i \in [t] \, .$$
\item We say that a $T$-compatible $k$-tuple $(v_1,\ldots,v_k)$ which has typical degrees has {\bf typical neighbor degrees} if for each $i \in [t]$ the number of neighbors of $v_i$ which have degree at least $(1-\delta_n) d(n)$ is at least $(1-\sqrt{\delta_n}) d(n)$. 

\item We say that a $T$-compatible $k$-tuple $(v_1,\ldots,v_k)$ is ${\bf good}$ if none of the vertices $v_1,\ldots,v_t$ are incident to edges with resistance at least ${\delta_n^{-{1 \over 4(k-1)}} \over d}$ across them. 
\end{enumerate}
\end{defin}

In the rest of this section all implicit constants in the $O$-notation depend on $k$.

\begin{claim} \label{claim:whpgood} Let $T$ be a fixed rooted tree with $k$ vertices, and $(X_1,\ldots,X_k)$ be a $T$-compatible $k$-tuple drawn as described above \cref{thm:GeneralFosterRobust}. Then with probability at least $1-O(\sqrt{\delta_n})$ the tuple $(X_1,\ldots,X_k)$ has typical degrees and typical neighbor degrees. Also, with probability $1-O(\delta_n^{1/4(k-1)})$ it is good.
\end{claim}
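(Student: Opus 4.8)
The plan is to bound each of the three ``bad'' events separately via a union bound over the vertices $v_1,\dots,v_t$ in the tuple, using the fact that each $X_i$ is distributed according to the stationary measure of $G_n$ (which by \eqref{eq:uniformstationaryRobust} is $O(\delta_n)$-close in total variation to the uniform distribution). For the \emph{typical degrees} event: if $X_i$ is a stationary vertex then, since a uniform vertex has degree in $(1\pm\delta_n)d(n)$ with probability at least $1-\delta_n$ by \eqref{eq:ass1}, the stationary vertex $X_i$ has this property with probability at least $1-O(\delta_n)$. Summing over $i\in[t]$ gives failure probability $O(\delta_n)$. For the \emph{good} event, I would run exactly the argument inside the proof of \cref{thm:GeneralFosterRobust}: by \cref{lem:highrescountRobust} with $\e = \delta_n^{-1/4(k-1)}/d$ the number of edges with resistance at least $\e$ across them is at most $(1+O(\delta_n))|V(G_n)|/(\e d - 2) = O(\delta_n^{1/4(k-1)})|V(G_n)|$, so the number of vertices incident to such an edge is at most a constant times that; since $X_i$ is stationary (hence $O(\delta_n)$-close to uniform), the probability that $X_i$ is incident to such an edge is $O(\delta_n^{1/4(k-1)})$, and a union bound over $i\in[t]$ gives the stated $1-O(\delta_n^{1/4(k-1)})$.

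The remaining and slightly more delicate piece is the \emph{typical neighbor degrees} event. Here I would use a first-moment / Markov argument. Call a vertex $v$ \emph{deficient} if more than $\sqrt{\delta_n}\,d(n)$ of its neighbors have degree less than $(1-\delta_n)d(n)$. Let $B$ be the set of vertices of degree less than $(1-\delta_n)d(n)$; by \eqref{eq:ass1} we have $|B|\le \delta_n|V(G_n)|$ (after absorbing the at most $\delta_n|V(G_n)|$ atypical-degree vertices, whose contribution we can handle separately or fold into the bound using \eqref{eq:ass2} to control $\sum_{v\in B}\deg(v)$). The number of edges incident to $B$ is at most $\sum_{v\in B}\deg(v)$, which by \eqref{eq:ass1} and \eqref{eq:ass2} is $O(\delta_n)|V(G_n)|d(n)$. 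Each deficient vertex accounts for at least $\sqrt{\delta_n}\,d(n)$ such edges, so the number of deficient vertices is at most $O(\delta_n)|V(G_n)|d(n)/(\sqrt{\delta_n}\,d(n)) = O(\sqrt{\delta_n})|V(G_n)|$. Since each $X_i$ is stationary, the probability $X_i$ is deficient is $O(\sqrt{\delta_n})$ (again using closeness to uniform, and noting a deficient vertex of typical degree contributes proportionally), and a union bound over $i\in[t]$ gives failure probability $O(\sqrt{\delta_n})$ for this event. Combining the three bounds: the tuple has typical degrees and typical neighbor degrees with probability $1-O(\sqrt{\delta_n})$, and is good with probability $1-O(\delta_n^{1/4(k-1)})$, as claimed.

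The main obstacle I anticipate is bookkeeping the interaction between the three notions: ``typical neighbor degrees'' is only defined for tuples that already have typical degrees, so one must be careful that the event being estimated is exactly the intersection and that the union bound is set up over the correct indices $i\in[t]$ (not all of $[k]$). One should also double-check that stationarity of each individual coordinate $X_i$ — rather than of the joint tuple — suffices: indeed, the marginal of each $X_i$ is stationary by construction (a stationary vertex followed by simple random walk steps stays stationary), so each single-coordinate bad event can be estimated marginally and then union-bounded, which is all that is needed. No subtler correlation argument is required.
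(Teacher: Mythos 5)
Your proposal is correct, and for two of the three sub-events (typical degrees, goodness) it coincides with the paper's proof: both reduce to a single coordinate via the union bound over $i\in[t]$, use \eqref{eq:ass1} together with \eqref{eq:uniformstationaryRobust} for the degree condition, and use \cref{lem:highrescountRobust} for goodness. The genuine difference is in the typical-neighbor-degrees event. You run a deterministic first-moment count: the set $B$ of low-degree vertices has $\sum_{v\in B}\deg(v)=O(\delta_n)|V(G_n)|d(n)$, so at most $O(\sqrt{\delta_n})|V(G_n)|$ vertices can have $\ge\sqrt{\delta_n}\,d(n)$ neighbors in $B$, and you then transfer to the stationary measure via \eqref{eq:uniformstationaryRobust}. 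The paper instead exploits the random-walk structure already present: writing $V_1$ for the typical-degree vertices and $V_2\subset V_1$ for those with typical neighbor degrees, it notes that $X_2$ (a uniform neighbor of the stationary $X_1$) is itself stationary, hence $\Prob(X_2\in V_1)=1-O(\delta_n)$, while conditionally on $X_1\in V_1\setminus V_2$ one has $\Prob(X_2\notin V_1)=\Omega(\sqrt{\delta_n})$; comparing the two forces $\Prob(X_1\in V_1\setminus V_2)=O(\sqrt{\delta_n})$. The two arguments give the same bound; yours is more elementary and self-contained (it does not use that a neighbor of a stationary vertex is stationary), while the paper's avoids any explicit edge counting. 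One small point to tidy in your version: with your deficiency threshold of $\sqrt{\delta_n}\,d(n)$, a non-deficient vertex of degree $(1-\delta_n)d(n)$ only has $(1-\sqrt{\delta_n}-\delta_n)d(n)$ neighbors of degree at least $(1-\delta_n)d(n)$, which falls just short of the $(1-\sqrt{\delta_n})d(n)$ demanded by \cref{def:robustdefs}; lowering the threshold to, say, $\tfrac12\sqrt{\delta_n}\,d(n)$ fixes this without changing any asymptotics. Your closing remarks about marginal stationarity of each coordinate sufficing, and about restricting the union bound to $i\in[t]$, are both accurate.
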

\begin{proof} Since $X_i$ is distributed according to the stationary distribution for $i\in[t]$ it suffices to prove that $X_1$ satisfies the requirement of \cref{def:robustdefs} and use the union bound to obtain the desired result. 

Indeed, firstly, by \eqref{eq:ass1} and \eqref{eq:uniformstationaryRobust} we learn that with probability $1-O(\delta_n)$ the vertex $X_1$ has degree within $(1\pm\delta_n)d(n)$. Secondly, by \cref{lem:highrescountRobust} the probability that $X_1$ is not good is $O(\delta_n^{1/4(k-1)})$. Thirdly, denote by $V_1 \subset V(G_n)$ the set of vertices with degree within $(1\pm \delta_n)d(n)$ and by $V_2 \subset V_1$ the set of vertices in $V_1$ such that at least $(1-\sqrt{\delta_n})d(n)$ of their neighbors are in $V_1$. As before we have that $\Prob(X_2 \in V_1) = 1-O(\delta_n)$. On the other hand, 
$$ \Prob(X_2 \in V_1 \mid X_1 \in V_1 \setminus V_2) \leq 1 - O(\sqrt{\delta_n}) \, .$$
Thus
$$ 1-O(\delta_n) \leq \Prob(X_2 \in V_1) \leq (1-O(\sqrt{\delta_n})) \Prob(X_1 \in V_1 \setminus V_2) + 1 - \Prob(X_1 \in V_1 \setminus V_2) \, ,$$
and we deduce that $\Prob(X_1 \in V_1 \setminus V_2) = O(\sqrt{\delta_n})$ concluding the proof.
\end{proof}

The corresponding analogue of \cref{cor:generalhighrescount} requires that we consider $k$-tuples having typical degrees. 

\begin{manualcor}{3.6'} \label{cor:generalhighrescountRobust} {\em Let $\{G_n\}$ be a high degree almost regular graph satisfying \eqref{eq:ass1} and \eqref{eq:ass2}. Denote by $N$ the number of $k$-tuples $(v_1,\ldots,v_k)$ that are $T$-compatible, have typical degrees, and such that 
$$ \Big | \Reff ( v_k \lr \{v_1,\ldots,v_{k-1}\}) - {k \over (k-1) d} \Big | \geq {C \delta_n^{1/2} \over d} \, ,$$
where $C$ is the constant from \cref{thm:GeneralFosterRobust}. Then
$$ N = O(\delta_n^{1/2} |V(G_n)| d^{k-1}) \, .$$ }
\end{manualcor}
\begin{proof} For any $T$-compatible $k$-tuple $(v_1,\ldots,v_k)$ that has typical degrees the probability that $(X_1,\ldots,X_k)$ equals $(v_1,\ldots,v_k)$ is ${1+O(\delta_n) \over |V(G_n)| d(n)^{k-1}}$. Furthermore, by \cref{claim:whpgood}, the probability that $(X_1,\ldots,X_k)$ have typical degrees is $1-O(\sqrt{\delta_n})$, hence the total number of $T$-compatible $k$-tuples that have typical degrees is $(1+O(\sqrt{\delta_n}))|V(G_n)| d^{k-1}$.

Lastly, by \cref{claim:whpgood} we learn that the assertion of \cref{thm:GeneralFosterRobust} continues to hold when we condition on $(X_1,\ldots,X_k)$ to have typical degrees and the desired assertion follows.
\end{proof}

We may now proceed to the proof of the main theorem in the almost regular setting. We first state the analogue of \cref{lem:openedges}.

\begin{manuallemma}{5.1'}\label{lem:openedgesRobust} {\em Let $\{G_n\}$ be a high degree almost regular graph satisfying \eqref{eq:ass1} and \eqref{eq:ass2}. Then
\be\label{eq:openedgesRobust} {1 \over |V(G_n)|} \sum_{\substack{(v_1,\ldots, v_k) \\ T\mathrm{-compatible} \\ \mathrm{good,\  typical\ degrees}}} \prod_{i=2}^k \Reff(v_i \lr \{v_1,\ldots, v_{i-1}\}) \leq k + O(\delta_n^{1/4}) \, .\ee }
\end{manuallemma}
\begin{proof} We closely follow the proof of \cref{lem:openedges} and prove by induction on $k$. By \cref{cor:generalhighrescountRobust} the number of $T$-compatible $k$-tuples $(v_1,\ldots,v_k)$ for which 
\be\label{eq:resineqR} \Reff(v_k \lr \{v_1,\ldots,v_{k-1}\}) \geq {k/(k-1) + C\sqrt{\delta_n} \over d} \ee
is $O(\delta_n^{1/2} |V(G_n)| d^{k-1})$. For such tuples, we bound each term in the product by ${\delta_n^{-1/4(k-1)} \over d}$. Thus we may bound the sum over such tuples (with the $|V(G_n)|^{-1}$ factor) from above by
$$ O(\delta_n^{1/2} d^{k-1}) \times \Big ( {\delta_n^{-1/4(k-1)} \over d} \Big )^{k-1} = O(\delta_n^{1/4}) \, .$$

For all other tuples we have the opposite inequality at \eqref{eq:resineqR}. Thus we may bound the last term in the product by this, sum it over the at most $(1+\delta_n)d$ possible choices of $v_k$ and obtain that the left hand side of \eqref{eq:openedgesRobust} is bounded above by
\begin{eqnarray*} O(\delta_n^{1/4}) + {k+O(\sqrt{\delta_n}) \over k-1} {1 \over |V(G_n)|} \sum_{\substack{(v_1,\ldots, v_{k-1}) \\ T\setminus\{v_k\}\mathrm{-compatible} \\ \mathrm{good,\  almost\ regular}}} \prod_{i=2}^{k-1} \Reff(v_i \lr \{v_1,\ldots, v_{i-1}\}) \, .\end{eqnarray*}
We apply our induction hypothesis to the sum on the right hand side, collect the telescoping terms and get the desired result. 
\end{proof}

\noindent {\bf Proof of \cref{thm:mainthm2Robust}.} Recall the definitions of the events $T(v_1,\ldots,v_k)$ and $\lambda_T(v_1,\ldots,v_k)$ from the proof of \cref{thm:mainthm2}. We still have that
$$ \Prob (B_{\T_n}(X, r) \cong T) = {1 \over |V(G_n)| |\stab_T|} \sum_{\substack{(v_1,\ldots, v_k) \\ T\textrm{-compatible}}} \Prob ( B_{\T_n}(v_1, r) = T(v_1,\ldots,v_k) ) \, .$$
To proceed with the proof we need to discard a larger set of $k$-tuples than we did in the proof of \cref{thm:mainthm2} and to do so earlier. By \cref{lem:stayawayfrombad} and \cref{claim:whpgood} the probability that $B(X,r)\cong T$ and $B(X,r-1)$ contains a vertex that is not of typical degree or not good or does not have typical neighbor degree is $o(1)$. Thus,

\begin{align*} \Prob (B_{\T_n}(X, r) \cong T) &= {1 \over |V(G_n)| |\stab_T|} \hspace{-.9cm} \sum_{\substack{(v_1,\ldots, v_k) \\ T\textrm{-compatible} \\ \textrm{good, typical degrees,} \\\textrm{typical neighbor degrees}}}\hspace{-1cm} \Prob ( T(v_1,\ldots,v_k) \subset \T_n \, , \, \lambda_T(v_1,\ldots,v_k) ) \\ &+ o(1) \, .\end{align*}
The analysis performed in the proof of \cref{thm:mainthm2} shows that for any $T$-compatible $k$-tuple $(v_1,\ldots,v_k)$ that has typical degrees and typical neighbor degrees we have
$$ \Prob ( \lambda_T(v_1,\ldots,v_k) \mid T(v_1,\ldots,v_k) \subset \T_n) \leq (1+o(1)) {e^{-t} (k-t) \over k} \, .$$
Since \eqref{eq:mainthmopenedges} holds, by \cref{lem:openedgesRobust} and the above we get that
$$ \Prob (B_{\T_n}(X, r) \cong T) \leq {e^{-t}(k-t) \over |\stab_T|} + o(1) \, .$$
Now the same proof as in \cref{thm:mainthm2}, below \eqref{eq:mainineq}, can now be used verbatim, with the exception that \cref{thm:tightnessRobust} takes the role of \cref{thm:tightness}. \qed 

\subsection{Proof of \cref{thm:mainthm2quenched}.}


The proof by a second moment argument. By \cref{thm:mainthm2Robust} we have that 
$$ \Exp Y_n(T) = (1+o(1))\frac{|V(G_n) |T_r| e^{-|V(T)|+|T_r|}}{|\stab_T|} \, .$$ 
The second moment can be expressed as
$$ \Exp Y_n(T)^2 = \sum_{u,v\in V(G_n)} \Prob(B_{\T_n}(v,r)\cong T \and B_{\T_n}(u,r)\cong T) \, .$$
We split the last sum to two according to whether the intersection $B_{\T_n}(v,r) \cap B_{\T_n}(u,r)$, viewed as a vertex subset, is empty or not. If the intersection is non-empty, then $u \in B_{\T_n}(v,2r)$. Hence we bound
\begin{eqnarray}\label{eq:splitsum} \Exp Y_n(T)^2 &\leq& \sum_{u,v} \Prob(B_{\T_n}(v,r)\cong T , B_{\T_n}(u,r)\cong T , B_{\T_n}(v,r) \cap B_{\T_n}(u,r) = \emptyset) \nonumber \\ &+& \sum_{v} \Exp\big [ |B_{\T_n}(v,2r)| \big ] \, .
\end{eqnarray}
\cref{thm:tightnessRobust} immediately yields that the second term in \eqref{eq:splitsum} is $o(|V(G_n)|^2)$ so we focus on estimating the first sum of the above inequality. To that aim, we condition on $B_{\T_n}(v,r)\cong T$ and on $B_{\T_n}(v,r)$ itself. That is, in the terminology of the proof of \cref{thm:mainthm2}, we condition the $T$-compatible $k$-tuple $(v_1,\ldots,v_k)$ and on the edges $T(v_1,\ldots,v_k)$ being in the UST $\T_n$ and on the event $\lambda_T(v_1,\ldots,v_k)$, i.e., that all edges touching $\{v_1,\ldots,v_t\}$, except for those in $T(v_1,\ldots,v_k)$, are not in $\T_n$. Conditioned on this information, the UST $\T_n$ restricted to the unconditioned edges is distributed as a UST on the graph obtained by $G_n$ by contracting the vertices $\{v_1,\ldots,v_k\}$ to a single vertex and erasing edges touching $\{v_1,\ldots,v_t\}$. Denote the resulting graph by $G_n'$.

It is not hard to verify that $G_n'$ is high degree almost regular. Indeed, since $G_n$ is simple, the degrees of all vertices of $G_n$ except $\{v_1,\ldots,v_k\}$ have dropped by at most $k$. The degree of the conjoined vertex $\{v_1,\ldots,v_k\}$ is at most $kn$ and at least $1$ and the total number of edges that were erased from $G_n$ is at most $t n = o(dn)$. Thus $G_n'$ satisfied \cref{def:robustdefs} perhaps with a slightly larger $\delta_n$ than the one of $G_n$, but still $o(1)$. Denote by $\T_n'$ the UST on $G_n'$. Then \cref{thm:mainthm2Robust} gives
$$ \sum_{u\in V(G_n')}  \Prob(B_{\T_n'}(u,r)\cong T) = (1+o(1))\frac{|V(G_n)||T_r| e^{-|V(T)|+|T_r|}}{|\stab_T|} \, . $$
Therefore, the first sum in \eqref{eq:splitsum} is just $(1+o(1)) [ \Exp Y_n(T) ] ^2$ while the second sum is $o(\Exp Y_n(T))$. We deduce that $\Exp Y_n(T)^2 = (1+o(1)) [\Exp Y_n(T)^2]$, or in other words, the variance of $Y_n(T)$ is $o([\Exp Y_n(T)]^2)$ and the assertion of the theorem follows by Chebychev's inequality. \qed

\section{Concluding remarks and open problems}
\subsection{Maximal diameter of the UST on regular graphs}\label{sec:maxdiamquestion} The diameter of the UST, i.e., the maximal graph distance between two vertices, is the most natural ``global'' property of the UST. We cannot hope this quantity has a universal behavior only under the assumption of regularity. Indeed, the complete graph on $d+1$ vertices is a $d$-regular graph in which the diameter of the UST is of order $\sqrt{d}$. In fact, in \cite{MNS19} it shown that that the diameter of the UST on various ``high-dimensional'' (such as regular expanders, the hypercube and $d$-dimensional tori for $d>4$) is of order $\sqrt{|V(G)|}$.

 On the other hand, take $m$ disjoint copies $K_1,\ldots, K_m$ of complete graphs on $d+1$ vertices and for each $i\in[m]$ let $(x_i,y_i)$ be an edge of $K_i$. Add the edges $(y_i,x_{i+1})$ for each $i\in[m-1]$ to form the connected graph $G$ --- it is easily seen that the diameter of the UST on $G$ is of order $n/\sqrt{d}$. (We remark that $G$ is not regular, however, it is not difficult to make small changes to this construction to make it regular, we omit the details.) Our first question is whether this upper bound is best possible. \\

\noindent{\bf Question.} Let $\{G_n\}$ be a sequence of simple, finite, connected $d(n)$-regular graphs with $d(n)\to \infty$. Let $D_n$ be the diameter of a UST of $G_n$. Does it hold that 
$$ \Exp D_n = O\Big ({ |V(G_n)| \over \sqrt{d(n)}} \Big ) \qquad ?$$

\noindent {\bf Remark 1}. In \cite{ANS20} it is shown that if $G$ is a simple, finite, connected graph with minimal degree $\Omega(n)$, then its diameter is $\Theta(\sqrt{n})$. This confirms the question above when $d(n)$ is linear in $|V(G_n)|$. \\

\noindent {\bf Remark 2}. \cref{thm:mainthm} implies that with high probability that diameter of the UST in the setting of the question above is $o(|V(G_n)|)$. In fact, the following much more general statement can be made. We thank Jan Hladk\'y for showing this to us.

\begin{prop}\label{prop:lineardiam} Let $\{G_n\}$ be a sequence of graphs with $|V(G_n)| \to \infty$ such that the uniform spanning tree $\T_n$ of $G_n$ has a local limit $(\T,o)$ that almost surely has no bi-infinite paths. Then, for any fixed $\e>0$ one has 
$$ \lim_{n \to \infty} \,\,\Prob \big ( D_n \geq \e V(G_n) \big ) = 0 \, ,$$
where $D_n$ is the diameter of $\T_n$.
\end{prop}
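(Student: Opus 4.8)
The plan is to convert the macroscopic event $\{D_n\geq\e|V(G_n)|\}$ into a statement about the number of vertices of $\T_n$ that locally witness a long path, and then evaluate the density of such vertices using the local convergence hypothesis. Fix, for each integer $r\geq 1$, the event $A_r$ defined on a rooted tree $(\tau,\rho)$ to be: at least two of the connected components of $\tau\setminus\{\rho\}$ contain a vertex at graph distance $r$ from $\rho$ (equivalently, $\rho$ is the midpoint of a geodesic of length $2r$ in $\tau$). Two features of $A_r$ matter: it depends only on the rooted ball $B_\tau(\rho,r)$ --- here one uses that $\tau$ is a tree, so distances inside the ball agree with distances in $\tau$ and the branches at $\rho$ are read off from the ball --- so that $\1_{A_r}$ is a bounded, locally constant, hence continuous function on $\Gb$; and the events are nested, $A_{r+1}\subseteq A_r$. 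Since a local limit of finite trees is almost surely a tree, we may restrict attention to rooted trees throughout, so that all of this applies to $\T_n$ and to $(\T,o)$.

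The first step is to observe that a large diameter forces many vertices to lie in $A_r$. If $D_n\geq L$, choose a geodesic path $x_0,x_1,\dots,x_{L'}$ in $\T_n$ with $L'\geq L$; then every vertex $x_i$ with $r\leq i\leq L'-r$ satisfies $A_r$, because the components of $\T_n\setminus\{x_i\}$ containing $x_{i-1}$ and $x_{i+1}$ are distinct and contain $x_{i-r}$ and $x_{i+r}$ respectively, each at distance exactly $r$ from $x_i$. Hence, for all $n$ with $\e|V(G_n)|\geq 2r$, the event $\{D_n\geq\e|V(G_n)|\}$ is contained in the event that at least $\e|V(G_n)|-2r$ vertices $v$ of $\T_n$ satisfy $(\T_n,v)\in A_r$, so by Markov's inequality and linearity of expectation
$$ \Prob\big(D_n\geq\e|V(G_n)|\big)\;\leq\;\frac{|V(G_n)|\,\Prob\big((\T_n,X)\in A_r\big)}{\e|V(G_n)|-2r}\,, $$
where $X$ is a uniformly chosen vertex of $G_n$, drawn independently of $\T_n$.

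Now fix $r$ and let $n\to\infty$: the prefactor tends to $1/\e$, and since $\1_{A_r}$ is a bounded continuous function on $\Gb$ and $\T_n$ converges locally to $(\T,o)$, we have $\Prob((\T_n,X)\in A_r)\to\Prob((\T,o)\in A_r)$; thus $\limsup_n\Prob(D_n\geq\e|V(G_n)|)\leq\e^{-1}\Prob((\T,o)\in A_r)$ for every fixed $r$. To conclude, let $r\to\infty$. Since $(\T,o)$ is locally finite, $o$ has finitely many incident branches, so $\bigcap_{r\geq1}A_r$ is exactly the event that at least two of these branches are infinite, which by K\"onig's lemma coincides with the event that $o$ lies on a bi-infinite simple path in $\T$ --- an event of probability $0$ by hypothesis. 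By monotonicity $\Prob((\T,o)\in A_r)\downarrow 0$, and the displayed bound then gives $\lim_n\Prob(D_n\geq\e|V(G_n)|)=0$.

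The argument is largely routine; the two places that need care are verifying that $A_r$ really is a ball-local (hence continuous) event on $\Gb$ --- which is where the tree structure of $\T_n$ is used --- and identifying $\bigcap_r A_r$ with the event ``$o$ lies on a bi-infinite path'', which relies on local finiteness of the limit together with K\"onig's lemma. Everything else is a one-line first-moment estimate combined with the definition of local convergence.
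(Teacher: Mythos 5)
Your proof is correct and follows essentially the same route as the paper: both arguments count the vertices in the middle of a long geodesic, note that each such vertex has two disjoint length-$r$ paths emanating from it (your event $A_r$), pass this to the local limit, and let $r\to\infty$ to contradict the absence of bi-infinite paths. The only difference is presentational --- you run the estimate directly via Markov's inequality, while the paper phrases it as a proof by contradiction --- and your write-up is, if anything, slightly more careful about why $A_r$ is ball-local and why $\bigcap_r A_r$ yields a bi-infinite path.
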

\begin{proof} Assume by contradiction that there exists some fixed positive number $\e_1, \e_2$ such that for infinitely many $n's$ we have
$$ \Prob \big ( D_n \geq \e_1 V(G_n) \big ) \geq \e_2 \, .$$
By passing to a subsequence we may assume without loss of generality that this happens for every $n$. Let $X$ be a uniform vertex of $G_n$. Then for any  
positive integer $r$, the probability that $B_{\T_n}(X,r)$ contains two disjoint paths of length $r$ emanating from $X$ is at least $\e_2 (\e_1 - 2r /|V(G_n)|)$ which is at least $\e_1 \e_2/2>0$ when $n$ is large enough. This contradicts the fact that $(\T,o)$ has no bi-infinite paths almost surely.
\end{proof}

\begin{cor} \label{cor:lindiam} The conclusion of \cref{prop:lineardiam} hold for any sequence $\{G_n\}$ of simple, finite, connected $d(n)$-regular graphs with $d(n)\to \infty$. 
\end{cor}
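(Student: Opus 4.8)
The plan is to combine \cref{thm:mainthm} with \cref{prop:lineardiam}. By \cref{thm:mainthm} the local limit of $\T_n$ exists and equals $(\T,\rho)$, the Poisson$(1)$ Galton--Watson tree conditioned to survive forever, independently of which $d(n)$-regular sequence we started with. By \cref{prop:lineardiam} it therefore suffices to verify that $(\T,\rho)$ almost surely contains no bi-infinite path.

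To check this I would use the explicit description of $(\T,\rho)$ recalled in \cref{sec:poisson}: it is distributed as an infinite ray (the \emph{spine}) emanating from $\rho$, with an independent unconditional Poisson$(1)$ Galton--Watson tree attached at every vertex of the spine. Since the unconditional Poisson$(1)$ process is critical and non-degenerate, each attached tree is finite almost surely, and since there are only countably many of them, almost surely \emph{all} of the attached bushes are finite. On this event $\T$ is one-ended: any infinite self-avoiding ray meets each (finite) bush in only finitely many vertices and, being self-avoiding, never returns to a spine vertex it has left, so after an initial finite segment it must coincide with a tail of the spine. Consequently any two rays of $\T$ share infinitely many vertices. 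A bi-infinite path, on the other hand, would split at any one of its vertices into two rays meeting only at that single vertex — a contradiction. Hence $\T$ almost surely has no bi-infinite path, \cref{prop:lineardiam} applies, and the corollary follows.

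The argument is essentially routine once \cref{thm:mainthm} is in hand; the only point that deserves a moment's care is the one-endedness step, i.e.\ reducing ``no bi-infinite path'' to a statement about rays and verifying that finiteness of the bushes forces every ray to be a tail of the spine. Alternatively one may simply invoke the classical fact that the size-biased Galton--Watson tree associated with a critical offspring law is almost surely one-ended (see \cite{conceptual95}), which bypasses this verification altogether.
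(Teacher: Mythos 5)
Your proposal is correct and follows the same route as the paper: invoke \cref{thm:mainthm} to identify the local limit as the Poisson$(1)$ Galton--Watson tree conditioned to survive, and then apply \cref{prop:lineardiam} using the fact that this tree almost surely has no bi-infinite path, which the paper simply cites from \cite{conceptual95}. Your additional verification of one-endedness via the spine-plus-finite-bushes description is a correct (and welcome) elaboration of that cited fact, but not a different argument.
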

\begin{proof} It is well-known that the Poisson$(1)$ Galton-Watson tree conditioned to survive forever has no bi-infinite paths almost surely, see \cite{conceptual95}.
\end{proof}

\subsection{Moments of degrees and graph distance balls}\label{subsec:questionsballs} The average degree of a finite tree is at most $2$ so $\E \deg_{\T_n}(X) \leq 2$ where $\T_n$ is a UST of some finite graph and $X$ is a uniformly drawn vertex. What can be said about higher moments? 

Without the assumption of regularity no higher moments are necessarily bounded as can be seen by taking a star on $n$ vertices. In the class of regular graphs we have the following construction. Assume that $d$ is a large even integer and consider $d/2$ disjoint copies of complete graphs on $d$ vertices. From each complete graph remove an edge, add a new vertex to the graph and connect it to each complete graph with two edges to the two endpoints of the removed edges. In any spanning tree of this graph the degree of the special vertex must be at least $d/2$ and the probability that a uniform vertex is the special one is of order $d^{-2}$. Thus, the $p$-th moment of the degree for any $p>2$ need not be bounded. However, we can use the techniques of this paper to show that the $p$-th moment exists for any $p\in(1,2)$.

\begin{lemma} There exists a constant $C<\infty$ such that for any finite connected regular graph $G$
$$ \Prob( \deg_\T(X) \geq k ) \leq {C \over k^2} \, ,$$
where $\T$ is a UST of $G$ and $X$ is a uniformly drawn vertex.
\end{lemma}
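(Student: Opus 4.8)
The plan is to bound the event $\{\deg_\T(X) \geq k\}$ by counting, via Kirchhoff's formula together with the negative correlations \eqref{eq:negativecorr}, the expected number of stars of size $k$ centered at a uniformly drawn vertex, while exploiting Foster's Theorem to control the effective resistances of the incident edges. Concretely, I would first write
$$ \Prob(\deg_\T(X)\geq k) = {1 \over |V(G)|}\sum_{v\in V(G)} \Prob(\deg_\T(v)\geq k) \leq {1 \over |V(G)|}\sum_{v} \sum_{\substack{S\subset E(v)\\|S|=k}} \Prob(S\subset \T) \, ,$$
where $E(v)$ is the set of the $d$ edges incident to $v$. By \eqref{eq:negativecorr} (applied iteratively, exactly as in the tightness proof) each summand is at most $\prod_{e\in S}\Reff(e)$. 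The first step is therefore to reduce matters to controlling $\sum_v \big(\sum_{e\in E(v)}\Reff(e)\big)^k / k!$ or, more efficiently, to split vertices according to whether they are ``good'' (no incident edge of resistance exceeding $\log d/d$, say) or not.

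For the good vertices I would argue as follows: if $v$ is good, then $\binom{d}{k}$-many $k$-subsets each contribute at most $(\log d/d)^k$, but this is wasteful; instead I would use that by Foster's Theorem the total resistance $\sum_{e\in E(v)}\Reff(e)$ summed over all $v$ is $2(|V(G)|-1)$, so that for a typical vertex $\sum_{e\in E(v)}\Reff(e)$ is $O(1)$. More precisely, applying \cref{lem:highrescount} with $\e = A/d$ shows that the number of edges with resistance at least $A/d$ is $O(|V(G)|/A)$, so the number of vertices incident to at least, say, $k/2$ such edges is $O(|V(G)|/(Ak))$. On the complementary set of vertices, at least $k/2$ of the $k$ edges in any chosen $S$ have resistance at most $A/d$; enumerating the $\binom{d}{k}$ choices of $S$ and the $\binom{k}{k/2}$ choices of which half has small resistance, and bounding the remaining (at most $k/2$) edges' resistances by the deterministic maximum $O(1)$, I get a contribution of order $\binom{d}{k}(A/d)^{k/2}\cdot O(1)^{k/2}$, which must be summed against a choice of $A$. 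Choosing $A$ as a suitable power of $k$ and optimizing should yield the $O(k^{-2})$ bound for the good-vertex contribution; the key point is that one has freedom to trade the combinatorial factor $\binom{d}{k}\le (ed/k)^k$ against the resistance factor $(A/d)^{k/2}$, and the Foster-type input \cref{lem:highrescount} is precisely what controls how many vertices can be ``resistance-heavy''.

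The bad vertices — those incident to many high-resistance edges — must be handled separately and directly: by \cref{lem:highrescount} their number is $O(|V(G)|/\log d)$, which is $o(|V(G)|)$, but since we need a bound uniform in $d$ (and valid even for small fixed $k$), I would instead simply bound the contribution of each such vertex's degree event by $1$ and note that the total resistance argument already shows the number of vertices incident to $k$ edges of resistance $\ge A/d$ is $O(|V(G)|/(Ak))$; taking $A$ a fixed multiple of $k$ makes this $O(|V(G)|/k^2)$ as required. The main obstacle I anticipate is organizing the split between the two regimes so that \emph{both} contributions are genuinely $O(k^{-2})$ with an absolute constant, independent of $d$ — in particular making sure the deterministic upper bound on $\Reff$ (which is only $O(1)$, not $o(1)$) does not spoil the half of the edges we do not control by Foster, which forces the $k/2$-versus-$k/2$ split rather than trying to control all $k$ resistances at once. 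Once the bookkeeping is set up, the estimates are routine manipulations of binomial coefficients and the harmonic-type bounds already used in \cref{thm:tightness}.
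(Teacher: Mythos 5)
Your overall strategy (threshold the edge resistances at $A/d$ with $A$ a fixed multiple of $k$, use \cref{lem:highrescount} to show that only $O(|V(G)|/k^2)$ vertices are incident to $k/2$ or more high-resistance edges, and handle the remaining vertices by a union bound combined with Kirchhoff's formula \eqref{eq:kirchhoff} and negative correlation \eqref{eq:negativecorr}) is exactly the paper's, and your treatment of the ``bad'' vertices is correct as stated. But the good-vertex estimate, as you have written it, fails. You enumerate all $\binom{d}{k}$ subsets $S$ of size $k$ and bound $\prod_{e\in S}\Reff(e)$ by $(A/d)^{k/2}\cdot O(1)^{k/2}$, using the deterministic bound $\Reff\le 1$ on the uncontrolled half. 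This gives at least $(d/k)^k(A/d)^{k/2}=A^{k/2}d^{k/2}/k^k$, which blows up as $d\to\infty$ for any fixed $k$ and any admissible $A$ (recall $A>2$ is needed for \cref{lem:highrescount} to apply); no choice of $A$ can repair an uncancelled factor of $d^{k/2}$. You correctly sense that the $O(1)$ bound on the uncontrolled edges is the danger, but the remedy is not to ``not spoil'' those factors --- it is to delete those edges from the event altogether: by monotonicity, if $\deg_\T(v)\ge k$ and $v$ is incident to fewer than $k/2$ high-resistance edges, then \emph{some} $(k/2)$-subset $S'$ consisting only of low-resistance edges satisfies $S'\subset\T$, so one should union-bound over the $\binom{d}{k/2}$ subsets of size $k/2$ only. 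With the threshold $k/(4ed)$ this yields $\binom{d}{k/2}\big(k/(4ed)\big)^{k/2}\le(2ed/k)^{k/2}(k/4ed)^{k/2}=2^{-k/2}$, uniformly in $d$, which is the paper's computation. So the missing idea is precisely this reduction from $k$-subsets to $(k/2)$-subsets of controlled edges; everything else in your outline matches the paper's proof.
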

\begin{proof}
Assume that $k\geq 16e$. By \cref{lem:highrescount} there are no more than ${8e|V(G)| \over k}$ edges with effective resistance at least ${k \over 4ed}$ between their endpoints. So the number of vertices which touch at least $k/2$ such edges is at most ${32 e |V(G)| \over k^2}$. Hence the probability that $X$ is such a vertex is at most ${32 e \over k^2}$. If $X$ is not such a vertex and its degree is at least k, then there are at least $k/2$ edges that touch $X$ which have effective resistance at most ${k \over 4ed}$ that are in the UST. The probability of this, by \eqref{eq:negativecorr}, is at most 
$$ {d \choose k/2} (k/4ed)^{k/2} \leq (2ed/k)^{k/2} (k/4ed)^{k/2} \leq 2^{-k} \, ,$$
concluding the proof. \end{proof}

\noindent {\bf Question.} Does there exist a constant $C$ such that for any finite connected regular graph $G$ one has $\E \deg^2_{\T}(X) \leq C$? \\



A very much related quantity is the size of the graph distance ball of radius $r$. In \cref{thm:tightness} we have proved that its size is tight. The case $r=1$ of this statement is trivial since  $|B_\T(X,1)| = \deg_\T(X)$ and so their first moment is at most $2$. When $r=2$ the question above about $\E \deg^2_\T(X)$ is equivalent to asking whether $\E |B_\T(X,2)|$ is bounded. Indeed, it is easy to see by the mass transport principle \cite[Chapter 8]{LyPe:ProbabilityTrees} that $\E \deg^2_\T(X) = \E |B_\T(X,2)|$ -  (each vertex transports its degree in $\T$ to all of its neighbors). The mean of $|B_\T(X,3)|$ can already be unbounded --- we leave this as an exercise to the reader. 


\section*{Acknowledgements} This research is supported by ERC starting grant 676970 RANDGEOM and by ISF grant 1207/15 and 1294/19. We wish to thank Jan Hladk\'y for many useful conversations and for his permission to include his proof of \cref{prop:lineardiam}. We also thank Matan Shalev for finding several errors in a previous version of this manuscript.

\footnotesize{
\bibliographystyle{abbrv}
\bibliography{bibl}

\begin{thebibliography}{10}

\bibitem{MR2354165}
D.~Aldous and R.~Lyons.
\newblock Processes on unimodular random networks.
\newblock {\em Electron. J. Probab.}, 12:no. 54, 1454--1508, 2007.

\bibitem{ANS20}
N.~Alon, A.~Nachmias, and M.~Shalev.
\newblock The diameter of the uniform spanning tree of dense graphs.
\newblock {\em {\rm \url{https://arxiv.org/abs/2009.09656}}}, 2020.

\bibitem{BeSc}
I.~Benjamini and O.~Schramm.
\newblock Recurrence of distributional limits of finite planar graphs.
\newblock {\em Electron. J. Probab.}, 6:no. 23, 1--13, 2001.

\bibitem{commute}
A.~K. Chandra, P.~Raghavan, W.~L. Ruzzo, R.~Smolensky, and P.~Tiwari.
\newblock The electrical resistance of a graph captures its commute and cover
  times.
\newblock {\em Comput. Complexity}, 6(4):312--340, 1996/97.

\bibitem{Foster48}
R.~M. Foster.
\newblock The average impedance of an electrical network.
\newblock In {\em Reissner {A}nniversary {V}olume, {C}ontributions to {A}pplied
  {M}echanics}, pages 333--340. J. W. Edwards, Ann Arbor, Michigan, 1948.

\bibitem{Gri:RandomTree}
G.~R. Grimmett.
\newblock Random labelled trees and their branching networks.
\newblock {\em J. Austral. Math. Soc. Ser. A}, 30(2):229--237, 1980/81.

\bibitem{HNT18}
J.~Hladk\'{y}, A.~Nachmias, and T.~Tran.
\newblock The local limit of the uniform spanning tree on dense graphs.
\newblock {\em J. Stat. Phys.}, 173(3-4):502--545, 2018.

\bibitem{Kirchhoff}
G.~Kirchhoff.
\newblock {Ueber die Aufl\"osung der Gleichungen, auf welche man bei der
  Untersuchung der linearen Vertheilung galvanischer Str\"ome gef\"uhrt wird}.
\newblock {\em Ann. Phys. Chem.}, 72(12).

\bibitem{LPW:Mixing}
D.~Levin and Y.~P. with contributions~by E.~Wilmer.
\newblock {\em Markov chains and mixing times}.
\newblock American Mathematical Society, second edition, 2017.
\newblock Available via http://darkwing.uoregon.edu/~dlevin/MARKOV/mcmt2e.pdf.

\bibitem{conceptual95}
R.~Lyons, R.~Pemantle, and Y.~Peres.
\newblock Conceptual proofs of {$L\log L$} criteria for mean behavior of
  branching processes.
\newblock {\em Ann. Probab.}, 23(3):1125--1138, 1995.

\bibitem{LyPe:ProbabilityTrees}
R.~Lyons and Y.~Peres.
\newblock {\em Probability on trees and networks}, volume~42 of {\em Cambridge
  Series in Statistical and Probabilistic Mathematics}.
\newblock Cambridge University Press, New York, 2016.

\bibitem{MNS19}
P.~Michaeli, A.~Nachmias, and M.~Shalev.
\newblock The diameter of uniform spanning trees.
\newblock {\em Probability Theory and Related Fields}, 2020.

\bibitem{NStFlour18}
A.~Nachmias.
\newblock Planar maps, random walks and circle packing.
\newblock {\em {\rm \url{https://arxiv.org/abs/1812.11224}}}.

\bibitem{Nash-Williams}
C.~S. J.~A. Nash-Williams.
\newblock Random walk and electric currents in networks.
\newblock {\em Proc. Cambridge Philos. Soc.}, 55:181--194, 1959.

\bibitem{inversestable}
G.~W. Stewart.
\newblock On the continuity of the generalized inverse.
\newblock {\em SIAM J. Appl. Math.}, 17:33--45, 1969.

\end{thebibliography}
 }

\end{document}